\newtheorem{thm}{Theorem}[section]
\newtheorem{lem}[thm]{Lemma}
\newtheorem{cor}[thm]{Corollary}
\newtheorem{pro}[thm]{Proposition}
\newtheorem{ex}[thm]{Example}
\newtheorem{rmk}[thm]{Remark}
\newtheorem{defi}[thm]{Definition}
\newcommand {\emptycomment}[1]{}
\newcommand{\be }{\begin{equation}}
\newcommand{\ee }{\end{equation}}
\newcommand{\Real}{\mathbb R}
\newcommand{\Comp}{\mathbb C}
\newcommand{\huaL}{\mathcal{L}}
\newcommand{\huaE}{\mathcal{E}}
\newcommand{\huaF}{\mathcal{F}}
\newcommand{\huaV}{\mathcal{V}}
\newcommand{\huaP}{\mathcal{P}}
\newcommand{\huaD}{\mathcal{D}}
\newcommand{\huaK}{\mathcal{K}}
\newcommand{\huaM}{\mathcal{M}}
\newcommand{\CWM}{C^{\infty}(M)}
\newcommand{\frkg}{\mathfrak g}
\newcommand{\frkE}{\mathfrak E}
\newcommand{\frkF}{\mathfrak F}
\newcommand{\frkV}{\mathfrak V}
\newcommand{\frkX}{\mathfrak X}
\newcommand{\half}{\frac{1}{2}}
\newcommand{\Courant}[1]{\left\llbracket  #1\right\rrbracket }
\newcommand{\br}[1]{   [ \cdot,    \cdot  ]   }
\newcommand{\dev}{\mathfrak{D}}
\newcommand{\id}{\rm{id}}
\newcommand{\Id}{\rm{Id}}
\newcommand{\g}{\mathfrak g}
\newcommand{\dM}{\mathrm{d}}
\newcommand{\Hom}{\mathrm{Hom}}
\newcommand{\Der}{\mathrm{Der}}
\newcommand{\PRin}{\mathrm{PRin}}
\newcommand{\Rin}{\mathrm{Rin}}
\newcommand{\PLie}{\mathrm{PLie}}
\newcommand{\Lie}{\mathrm{Lie}}
\newcommand{\crmod}{\mathrm{{\bf crmod}}}
\newcommand{\gl}{\mathfrak {gl}}
\newcommand{\sln}{\mathfrak {sl}}
\newcommand{\Ker}{\mathrm{ker}}
\newcommand{\Rank}{\mathrm{Rank}}
\newcommand{\coker}{\mathrm{coker}}
\newcommand{\img}{\mathrm{im}}
\newcommand{\K}{\mathbb{K}}
\begin{document}

\title[pre-Lie Rinehart algebras ]{Cohomologies and crossed modules for pre-Lie Rinehart algebras}

\author{Liangyun Chen}
\address{School of Mathematics and Statistics, Northeast Normal University,\\
 Changchun 130024, Jilin, China }
\email{chenly640@nenu.edu.cn}

\author{Meijun Liu}
\address{School of Mathematics and Statistics, Northeast Normal University,\\
 Changchun 130024, Jilin, China }
\email{liumj281@nenu.edu.cn}

\author{Jiefeng Liu$^{\ast}$}
\address{School of Mathematics and Statistics, Northeast Normal University,\\
 Changchun 130024, Jilin, China }
\email{liujf534@nenu.edu.cn}
\thanks{$^{\ast}$ the corresponding author}
\vspace{-5mm}


\begin{abstract}
A pre-Lie-Rinehart algebra is an algebraic generalization of the notion of a left-symmetric algebroid. We construct pre-Lie-Rinehart algebras from $r$-matrices through Lie algebra actions. We study  cohomologies of pre-Lie-Rinehart algebras and show that abelian extensions of pre-Lie-Rinehart algebras  are classified by the second cohomology groups. We introduce the notion of crossed modules for pre-Lie-Rinehart algebras and show that they are classified by the third cohomology groups of pre-Lie-Rinehart algebras. At last, we use (pre-)Lie-Rinehart $2$-algebras to characterize the crossed modules for (pre-)Lie Rinehart algebras.
\end{abstract}

\subjclass[2010]{17A30, 17A65, 17B63}

\keywords{pre-Lie-Rinehart algebra, cohomology, abelian extension, crossed module, (pre-)Lie-Rinehart $2$-algebra.}



\maketitle

\tableofcontents

\allowdisplaybreaks


\section{Introduction}\label{sec:intr}
The notion of a Lie-Rinehart algebra is an algebraic generalization of Lie algebroids. It was first introduced by Rinehart in his seminal paper \cite{Rine}, in which he used this notion to develop a formalism of differential forms for general commutative algebras. In \cite{Hueb1}, Huebschmann  studied Lie-Rinehart algebras systematically and emphasized their important applications in Poisson geometry. See \cite{CaLaPi2,Dokas,Hueb2,Huebs3,Huebs4} for more details and applications on  Lie-Rinehart algebras.

\emptycomment{Lie-Rinehart algebras were first introduced by Rinehart who defined cohomology groups for the category of Lie-Rinehart algebras with coefficients in a Lie-Rinehart module in 1963 in \cite{Rine}. J. Huebschmann  has been studied further developments in 1990 in \cite{Hueb1} and  emphasized their important applications in Poisson geometry \cite{{Huebs1},{Huebs2}}.  J. M. Casas, M. Ladra and T. Pirashvili  studied  crossed modules and triple cohomology for Lie-Rinehart algebras in \cite{{CFGK1},{CL}}. Loday and Vallette remark that a Lie-Rinehart algebra is a Poisson algebra in \cite{LV}. Lie-Rinehart structures have been the subject of extensive studies, relations to symplectic geometry, poisson structures and etc. See more details for  Lie-Rinehart algebras in \cite{{CLZ},{Dokas},{Vitagliano}}.}

Pre-Lie algebras (or left-symmetric algebras) arose from the study of convex homogeneous cones (\cite{Vinberg}), affine manifolds and affine structures on Lie groups (\cite{Koszul}), deformation and cohomology theory of associative algebras (\cite{G}) and then appear in many fields in mathematics and mathematical physics. See the survey article \cite{Pre-lie algebra in geometry} and the references therein.  The notion of a pre-Lie-Rinehart algebra was introduced by Fl${\o}$ystad, Manchon and Munthe-Kaas in \cite{FMM}. They organized colored aromatic trees into a pre-Lie-Rinehart algebra endowed with a natural trace map, which yields
the algebraic foundations of aromatic B-series. The importance of a pre-Lie-Rinehart algebra is that the commutator gives rise to a Lie-Rinehart algebra and the left multiplication gives rise to a representation of the commutator Lie-Rinehart algebra.  A pre-Lie-Rinehart algebra is also an algebraic generalization of the notion of a left-symmetric algebroid (\cite{LiuShengBaiChen}). See \cite{BBo,lsb,lsb2,WLS} for more details on left-symmetric algebroids. Recently, the authors in \cite{BCEM} studies the cohomology and deformation of the pre-Lie-Rinehart algebra (they called it a left-symmetric Rinehart algebra).

The notion of crossed modules was introduced by Whitehead in \cite{White} to study relative homotopy groups. After that, crossed modules have been one of the fundamental concepts in homotopy theory, algebraic $K$-theory, combinatorial group theory and homological algebras. Crossed modules for Lie algebras were studied in \cite{G2,Kassel} and it was shown that they are classified by the third cohomology groups of Lie algebras. Lie algebras can be categorified to Lie $2$-algebras. It is well-known that the category of strict Lie $2$-algebras is isomorphic to the category of crossed modules for Lie algebras (\cite{BC}). The author in \cite{Sheng19} introduced the notion of crossed modules for pre-Lie algebras and gave the equivalence between the category of strict Lie $2$-algebras and the category of crossed modules for pre-Lie $2$-algebras, in which the pre-Lie $2$-algebra is a categorification of the pre-Lie algebra.  As a natural generalization of crossed modules for Lie algebras, the authors in \cite{CaLaPi1} introduced the notion of crossed modules for Lie-Rinehart algebras and proved that they are classified by the third cohomology groups of Lie-Rinehart algebras.

In this paper, we introduce the notion of crossed modules for pre-Lie-Rinehart algebras and classify crossed modules via the third cohomology groups of pre-Lie-Rinehart algebras. Then we give the notion of (pre-)Lie-Rinehart $2$-algebras, which is the algebraic generalization of (pre-)Lie $2$-algebroids given in \cite{LS,SZ17}. Furthermore, we establish a one-to-one correspondence between strict (pre-)Lie-Rinehart $2$-algebras and crossed modules for (pre-)Lie Rinehart algebras.

\emptycomment{In this paper, we recall the notion of pre-Lie-Rinehart algebras and construct some examples of pre-Lie-Rinehart algebras in different ways. Particularly, we construct pre-Lie-Rinehart algebras by $r$-matrices over $\sln(2,\K)$ in Example \ref{ex:$r$-matrices}. We study the cohomology and extensions of pre-Lie-Rinehart algebras. According to the the classical argument in Eilenberg-Mac Lane cohomology, we bulid  a one-to-one corresponding between the equivalent classes of
extensions and the second cohomology classes in Theorem \ref{thm:secind equivalent classes}.

Our main result is relate the crossed module with the cohomology of pre-Lie-Rinehart algebras. We show that crossed module of pre-Lie-Rinehart algebras can be used to define crossed extensions. Then, we establish a bijection between the third cohomology of pre-Lie-Rinehart algebras and the set of equivalence classes of crossed extensions in Theorem \ref{crossed module of pLA}. Finally, we introduce the definition of strict (pre-)Lie-Rinehart $2$-algebras and establish a one-to-one correspondence between strict (pre-)Lie-Rinehart $2$-algebras and crossed modules of (pre-)Lie Rinehart algebras in Theorem \ref{one to one correspondence}.}

The paper is organized as follows.
In Section \ref{sec:MC-RRB-operator}, first we review the representations and cohomologies of  Lie-Rinehart algebras. Then we recall the notion of a pre-Lie-Rinehart algebra and  illustrate it by some examples. In Section \ref{sec:$r$-matrices}, we use $r$-matrices to construct pre-Lie-Rinehart algebras. In Section \ref{sec:Cohomology and extensions}, we study the cohomologies of pre-Lie-Rinehart algebras and show that abelian extensions of pre-Lie-Rinehart algebras  can be classified by the second cohomology groups. In Section \ref{sec:crossed module}, we introduce the notion of crossed modules for pre-Lie-Rinehart algebras and show that they are classified by the third cohomology groups of pre-Lie-Rinehart algebras. In section \ref{sec:Lie-Rinehart $2$-algebras}, we introduce the notion of (pre-)Lie-Rinehart $2$-algebras and then use strict (pre-)Lie-Rinehart $2$-algebras to characterize the crossed modules for (pre-)Lie Rinehart algebras.

\section{Lie-Rinehart algebras and pre-Lie-Rinehart algebras}\label{sec:MC-RRB-operator}
In this section, we first recall the representations and cohomologies of Lie-Rinehart algebras. Then we introduce the notion of a pre-Lie-Rinehart algebra and give some constructions of pre-Lie-Rinehart algebras.
\subsection{Lie-Rinehart algebras}
Let $\K$ be a field and $A$ be a unitary commutative algebra over $\K$. A derivation of $A$ is a $\K$-linear map $D:A\longrightarrow A$ satisfying the Leibniz rule $D(ab)=aD(b)+D(a)b$ for all $a,b\in A$. The $A$-module $\Der(A)$ of all derivations of $A$ is a Lie $\K$-algebra under the commutator $[D_1,D_2]^c:=D_1\circ D_2-D_2\circ D_1$ for $D_1,D_2\in\Der(A)$.
\begin{defi}
A {\bf Lie-Rinehart algebra} consists of the following data:
\begin{itemize}
\item[$\bullet$]a unitary commutative $\K$-algebra $A$;
\item[$\bullet$]an $A$-module $E$;
\item[$\bullet$]an $A$-module map $\theta_E:E\longrightarrow\Der(A)$, called the anchor;
\item[$\bullet$]a $\K$-linear Lie bracket $[-,-]_E:E\otimes E\to E$.
\end{itemize}
These data satisfy the following additional conditions:
\begin{eqnarray}
 [X,aY]_E&=&a[X,Y]_E+\theta_E(X)(a)Y,\\
 \theta_E([X,Y]_E)&=&[\theta_E(X),\theta_E(Y)]^c,\quad \forall~X,Y\in E,a\in
A.
\end{eqnarray}
We denote a Lie-Rinehart algebra by $(E,[-,-]_E,\theta_E)$ if $A$ is fixed.
\end{defi}

\emptycomment{\begin{rmk}
 If the $A$-module $E$ is faithful, the condition that $\theta_E:E\longrightarrow\Der(A)$ is a Lie algebra morphism in the definition of Lie-Rinehart algebra is not necessary. This follows from
 $$[X,[Y,aZ]_E]_E+c.p.=a\big([X,[Y,Z]_E]_E+c.p.\big)+\big([\theta_E(X),\theta_E(Y)]^c-\theta_E([X,Y]_E)\big)(a)Z,$$
 where $X,Y,Z\in E$ and $a\in A$.
\end{rmk}}

\begin{ex}
Let $(\huaL,[-,-]_\huaL,\theta_\huaL)$ be a Lie algebroid on a manifold $M$. Then $(E=\Gamma(\huaL),[-,-]_\huaL,\theta_\huaL)$ is a Lie-Rinehart algebra with $\K=\Real$ and $A=\CWM$ the algebra of smooth functions on a manifold $M$.
\end{ex}

Recall that an {\bf action} of a Lie $\K$-algebra $(\g,[-,-]_\g)$ on a unitary commutative $\K$-algebra $A$ is a Lie algebra homomorphism for $\g$ to $\Der(A)$.

\begin{ex}
 Let $(\g,[-,-]_\g)$ be a Lie $\K$-algebra and $\lambda:\g\longrightarrow\Der(A)$ an action of $\g$ on $A$. Then the {\bf transformation Lie-Rinehart algebra} is $E=A\otimes \g$ with Lie bracket
 $$[a\otimes x,b\otimes y ]_\rho=ab\otimes [x,y]_\g+a\rho(x)(b)\otimes y-b\rho(y)(a)\otimes x$$
 and the anchor $\theta_\rho(a\otimes x)(b)=a\rho(x)(b)$, where $a,b\in A,x,y\in\g.$
\end{ex}

\emptycomment{A $\K$-linear map $\sigma:A\longrightarrow \huaE$ is called a derivation if it satisfies the Leibniz rule:
$$\sigma(ab)=\sigma(a)b+a\sigma(b),\forall~a,b\in A.$$
All this $\huaE$-value derivations form an $A$-module denoted by $\Der(A,\huaE)$. }
Let $\huaE$ be an $A$-module. A {\bf derivation} of $\huaE$ is a pair $(D,\sigma_D)$, where $D:\huaE\longrightarrow\huaE$ is a $\K$-linear map and $\sigma_D\in\Der(A)$ satisfying the following compatibility condition:
  $$D(a u)=a D(u)+\sigma_D(a)u,\quad \forall a\in A,u\in\huaE.$$
  Derivations of $\huaE$ form an $A$-module denoted by $\dev(\huaE)$.

\begin{ex}
  Let $\huaE$ be an $A$-module. Then $\dev(\huaE)$ is a Lie-Rinehart algebra with respect to the commutator bracket and the anchor $\theta_\dev$ is given by $\theta_\dev(D,\sigma)=\sigma$, which we call a {\bf gauge Lie-Rinehart algebra}.
\end{ex}
\begin{defi}
Let $(E,[\cdot,\cdot]_E,\theta_E)$ and $(F,[\cdot,\cdot]_F,\theta_F)$ be two Lie-Rinehart algebras over $A$. A  {\bf homomorphism}
from $E$ to $F$ is an $A$-module morphism $\varphi:E\longrightarrow F$ such
that
\begin{eqnarray*}
 \varphi[X,Y]_E=[\varphi(X),\varphi(Y)]_F,\quad \theta_F\circ\varphi=\theta_E,\quad\forall X,Y\in E.
\end{eqnarray*}
\end{defi}
\begin{defi}
A {\bf representation} of a Lie-Rinehart algebra $(E,[\cdot,\cdot]_E,\theta_E)$ on an $A$-module $\huaE$ is a homomorphism $\rho$ from $E$ to the gauge Lie-Rinehart algebra $\dev(\huaE)$.
\end{defi}
An $A$-module $\huaE$ equipped with a representation of $E$ is said to be an $E$-module.

For every Lie-Rinehart algebra $(E,[\cdot,\cdot]_E,\theta_E)$, $A$ is a $E$-module with $\rho=\theta_E$.

\emptycomment{Let $(E,[\cdot,\cdot]_E,\theta_E)$ be a Lie-Rinehart algebra and let $\huaK=\Ker(\theta_E)$. Then $\huaK$ becomes a $E$-module with
$$\rho(X)(Y)=[X,Y]_E,\quad\forall~X\in E,Y\in \huaK.$$}
\emptycomment{
Choose a representation  $(\huaE;\rho).$
The {\bf dual representation} of a Lie-Rinehart algebra $E$ on $\huaE^*:=\Hom_A(\huaE,A)$ is the $A$-module map $\rho^*:E\longrightarrow \Der(\huaE^*)$ given by
$$
\langle \rho^*(X)(\xi),u\rangle=\theta_E(X)\langle \xi,u\rangle-\langle \xi,\rho(X)(u)\rangle,\quad \forall~X\in E,~\xi\in \huaE^*,u\in\huaE.
$$}
Consider the $A$-module $E$ with representation $(\huaE;\rho)$, one defines for each $k\geq0$ the module of $k$-cochains on $E$ with coefficients in $A$ to be
$$\Omega^k(E,\huaE):=\Hom_A(\wedge^k E,\huaE).$$
 The corresponding coboundary operator
${\dM}:\Omega^k(E,\huaE)\longrightarrow
\Omega^{k+1}(E,\huaE)$ is given by
\begin{eqnarray*}
  {\dM}\varpi(X_1,\cdots,X_{k+1})&=&\sum_{i=1}^{k+1}(-1)^{i+1}\rho(X_i)\varpi(X_1\cdots,\widehat{X_i},\cdots,X_{k+1})\\
  &&+\sum_{1\leq i<j\leq k+1}(-1)^{i+j}\varpi([X_i,X_j]_E,X_1\cdots,\widehat{X_i},\cdots,\widehat{X_j},\cdots,X_{k+1}).
\end{eqnarray*}
We denote the $n$-th cohomology group of $(\bigoplus_{n\geq0}\Omega^{n}(E,\huaE),\dM)$ by $H_{\Rin}^n(E,\huaE)$.
\emptycomment{In particular, consider the $A$-module $E$ with representation $(A;\rho=\theta_E)$, an element $\varpi\in\Omega^2(E,A)$ is a {\bf 2-cocycle} if ${\dM} \varpi=0$, i.e.
\begin{equation}
  \theta_E(X)\varpi(Y,Z)- \theta_E(Y)\varpi(X,Z)+ \theta_E(Z)\varpi(X,Y)-\varpi([X,Y]_E,Z)+\varpi([X,Z]_E,Y)-\varpi([Y,Z]_E,X)=0.
\end{equation}

For $\varpi\in\wedge^2 E^*$, define $\varpi^\flat:E\longrightarrow \Hom_A(E,A)$ by
$$\varpi^\flat(X)(Y)=\varpi(X,Y),\quad X,Y\in E.$$
Define $\Ker~\varpi=\{X\in E\mid \varpi(X,Y)=0,\forall~Y\in E\}$. The element $\varpi\in\Omega^2(E,A)$ is called {\bf nondegenerate} if $\Ker~ \varpi=0$. This condition implies that $\varpi^\flat$ is injective and $\varpi^\flat$ may be not surjective. A {\bf symplectic Lie-Rinehart algebra} is a Lie-Rinehart algebra together with
a $2$-cocyle $\varpi\in\Omega^2(E,A)$ such that $\varpi^\flat$ is an isomorphism of $A$-modules. A sub-Lie-Rinehart algebra of a symplectic Lie-Rinehart algebra $(E,[-,-]_E,\theta_E,\varpi)$ is called {\bf Lagrangian} if it is maximal isotropic with respect to the skew-symmetric
bilinear form $\varpi$.}

\subsection{Pre-Lie-Rinehart algebras}\label{sec: pre-Lie-Rinehart algebras}

\begin{defi}
A {\bf pre-Lie $\K$-algebra} is a pair $(\frkg,\cdot_\frkg)$, where $\g$ is a vector space over $\K$, and  $\cdot_\frkg:\g\otimes \g\longrightarrow\g$ is a $\K$-bilinear operation
such that for all $x,y,z\in \g$, the associator
\begin{equation}\label{eq:associator}
(x,y,z)\triangleq x\cdot_\frkg(y\cdot_\frkg z)-(x\cdot_\frkg y)\cdot_\frkg z
\end{equation} is symmetric in $x,y$,
i.e.
$$(x,y,z)=(y,x,z),\;\;{\rm or}\;\;{\rm
equivalently,}\;\;x\cdot_\frkg(y\cdot_\frkg z)-(x\cdot_\frkg y)\cdot_\frkg z=y\cdot_\frkg(x\cdot_\frkg z)-(y\cdot_\frkg x)\cdot_\frkg
z.$$
\end{defi}

Let $(\frkg,\cdot_\frkg)$ be a pre-Lie $\K$-algebra. The commutator $
[x,y]^c=x\cdot y-y\cdot x$ defines a Lie $\K$-algebra structure
on $\g$, which is called the {\bf sub-adjacent Lie algebra} of
$(\frkg,\cdot_\frkg)$ and denoted by $\g^c$. Furthermore,
$L:\g\longrightarrow \gl(\g)$ with $x\rightarrow L_x$, where
$L_xy=x\cdot_\g y$, for all $x,y\in \g$, gives a representation of
the Lie algebra $\g^c$ on $\g$. See \cite{Pre-lie algebra in geometry} for more details.

\begin{defi}{\rm(\cite{FMM})}
A {\bf pre-Lie-Rinehart algebra} consists of the following data:
\begin{itemize}
\item[$\bullet$]a unitary commutative $\K$-algebra $A$;
\item[$\bullet$]an $A$-module $E$;
\item[$\bullet$]an $A$-module map $\theta_E:E\longrightarrow\Der(A)$, called the anchor;
\item[$\bullet$]a $\K$-linear pre-Lie operation $\cdot_E:E\otimes E\to E$.
\end{itemize}
These data satisfy the following additional conditions:
\begin{eqnarray}
 X\cdot_E(aY)&=&a(X\cdot_E Y)+\theta_E(X)(a)Y,\\
 (aX)\cdot_E Y&=&a(X\cdot_E Y),\\
 \theta_E(X\cdot_E Y-Y\cdot_E X)&=&[\theta_E(X),\theta_E(Y)]^c,\quad \forall~X,Y\in E,a\in
A.
\end{eqnarray}
We denote a pre-Lie-Rinehart algebra by $(E,\cdot_E,\theta_E)$ if $A$ is fixed.
\end{defi}

For any $X\in E$, we define $L_X:E\longrightarrow E$  and $R_X:E\longrightarrow E$ by
\begin{equation}\label{eq:leftmul}
L_XY=X\cdot_E Y,\quad R_XY=Y\cdot_E X.
\end{equation}

Similar to the proof Theorem 3.2 in \cite{LiuShengBaiChen}, we have
\begin{pro}
Let $(E,\cdot_E,\theta_E)$ be a pre-Lie-Rinehart algebra. Define a skew-symmetric $\K$-bilinear bracket operation $[-,-]_E$ on $E$ by
 $$
 [X,Y]_E=X\cdot_E Y -Y\cdot_E X,\quad \forall ~X,Y\in E.
  $$
Then $(E,[-,-]_E,\theta_E)$ is a Lie-Rinehart algebra, and denoted by
$E^c$, called the {\bf sub-adjacent Lie-Rinehart algebra} of
 $(E,\cdot_E,\theta_E)$. Furthermore, $L$  gives a
  representation of the Lie-Rinehart algebra $E^c$ on $E$.
\end{pro}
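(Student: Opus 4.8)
The plan is to verify directly that the triple $(E,[-,-]_E,\theta_E)$ satisfies the two defining axioms of a Lie-Rinehart algebra, and then to check that $L$ is a Lie-Rinehart homomorphism into the gauge algebra $\dev(E)$. Since the statement asserts an analogue of a known result for left-symmetric algebroids (Theorem 3.2 in \cite{LiuShengBaiChen}), the computations should be formally identical once everything is phrased algebraically, so I would organize the proof around the three items one must confirm: the Jacobi identity for $[-,-]_E$, the Leibniz-type compatibility $[X,aY]_E = a[X,Y]_E + \theta_E(X)(a)Y$, and the multiplicativity of the anchor $\theta_E([X,Y]_E) = [\theta_E(X),\theta_E(Y)]^c$.

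First I would note that the anchor compatibility is immediate: by definition $[X,Y]_E = X\cdot_E Y - Y\cdot_E X$, so the third axiom of a pre-Lie-Rinehart algebra, namely $\theta_E(X\cdot_E Y - Y\cdot_E X) = [\theta_E(X),\theta_E(Y)]^c$, is exactly the statement $\theta_E([X,Y]_E) = [\theta_E(X),\theta_E(Y)]^c$. Next I would establish the Leibniz rule. Expanding $[X,aY]_E = X\cdot_E(aY) - (aY)\cdot_E X$ and applying the first two pre-Lie-Rinehart axioms gives $X\cdot_E(aY) = a(X\cdot_E Y) + \theta_E(X)(a)Y$ and $(aY)\cdot_E X = a(Y\cdot_E X)$; subtracting yields $a(X\cdot_E Y - Y\cdot_E X) + \theta_E(X)(a)Y = a[X,Y]_E + \theta_E(X)(a)Y$, as required.

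The main obstacle, and the only genuinely substantive verification, is the Jacobi identity for $[-,-]_E$. The standard mechanism is that in any pre-Lie algebra the commutator is automatically a Lie bracket, the key input being that the associator $(X,Y,Z) = X\cdot_E(Y\cdot_E Z) - (X\cdot_E Y)\cdot_E Z$ is symmetric in its first two arguments. I would expand the cyclic sum $[[X,Y]_E,Z]_E + [[Y,Z]_E,X]_E + [[Z,X]_E,Y]_E$ into the twelve associator-type terms and regroup them as a sum of three symmetrized associators of the form $(X,Y,Z)-(Y,X,Z)$, each of which vanishes by left-symmetry; this is the purely algebraic pre-Lie computation and requires no use of the anchor or the $A$-module structure, so it transfers verbatim from the pre-Lie $\K$-algebra case recalled earlier in the section.

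Finally, to see that $L$ is a representation of $E^c$ on $E$, I would check that $X \mapsto (L_X, \sigma_{L_X})$ with $\sigma_{L_X} = \theta_E(X)$ lands in $\dev(E)$ and respects brackets. The derivation property $L_X(aY) = a L_X(Y) + \theta_E(X)(a)Y$ is precisely the first pre-Lie-Rinehart axiom, so $(L_X,\theta_E(X)) \in \dev(E)$; that $L$ is $A$-linear follows from the second axiom $(aX)\cdot_E Y = a(X\cdot_E Y)$, i.e. $L_{aX} = a L_X$; and the bracket-preservation $L_{[X,Y]_E} = [L_X,L_Y]^c$ amounts to the identity $L_{X\cdot_E Y - Y\cdot_E X} = L_X L_Y - L_Y L_X$, which is a direct reformulation of the left-symmetry of the associator applied to a third argument $Z$. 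Compatibility of anchors, $\theta_\dev(L_X,\theta_E(X)) = \theta_E(X)$, holds by construction, completing the proof that $L$ is a Lie-Rinehart homomorphism.
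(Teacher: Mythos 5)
Your proposal is correct and follows essentially the same route as the paper, which simply defers to the direct verification in Theorem 3.2 of \cite{LiuShengBaiChen}: the Leibniz compatibility comes from expanding $[X,aY]_E$ with the first two pre-Lie-Rinehart axioms, the anchor condition is the third axiom verbatim, the Jacobi identity is the standard pre-Lie commutator argument, and $L_{[X,Y]_E}=[L_X,L_Y]^c$ is left-symmetry of the associator evaluated on a third argument. Your write-up is in fact slightly more complete than the paper's sketch, since you also record explicitly that $(L_X,\theta_E(X))$ lies in $\dev(E)$ and that $L$ is $A$-linear.
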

\emptycomment{
\begin{proof}
For $\forall~a\in A$, by a direct calculation, we have
\begin{eqnarray*}
[X,aY]_E&=&X\cdot_E(aY) -aY\cdot_E X=a(X\cdot_E Y)+\theta_E(X)(a)Y-aY\cdot_E X\\
&=&a[X,Y]_E+\theta_E(X)(a)Y
\end{eqnarray*}
Then $(E,[-,-]_E,\theta_E)$ is a Lie-Rinehart algebra.

For $\forall~X,Y\in E^c, Z\in E$, we have
\begin{eqnarray*}
L_{[X,Y]_E}Z&=&[X,Y]_E\cdot_E Z=(X\cdot_E Y -Y\cdot_E X)\cdot_E Z\\
&=&X\cdot_E Y\cdot_E Z -Y\cdot_E X\cdot_E Z=L_XL_YZ-L_YL_XZ\\
&=&[L_X,L_Y]^cZ.
\end{eqnarray*}
Then $L$ is a representation of the Lie-Rinehart algebra $E^c$ on $E$.
\end{proof}}

\begin{defi}
Let $(E,\cdot_E,\theta_E)$ and $(F,\cdot_F,\theta_F)$ be pre-Lie-Rinehart
algebras. An $A$-linear map $\varphi:E\longrightarrow F$ is
called a {\bf homomorphism}  of pre-Lie-Rinehart algebras, if the following
conditions are satisfied:
$$\varphi(X \cdot_E Y)=\varphi(X)\cdot_F\varphi(Y),\quad
\theta_F\circ\varphi=\theta_E, \quad\forall X,Y\in E. $$
\end{defi}

It is straightforward to obtain following proposition.
\begin{pro}
Let $(E,\cdot_E,\theta_E)$ and $(F,\cdot_F,\theta_F)$ be pre-Lie-Rinehart algebras, and  $\varphi:E\longrightarrow F$ a homomorphism of pre-Lie-Rinehart algebras. Then $\varphi$ is  a Lie-Rinehart algebra homomorphism from the corresponding sub-adjacent Lie-Rinehart algebras $E^c$ to $F^c$.
\end{pro}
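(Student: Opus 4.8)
The plan is to verify directly the two defining conditions for $\varphi\colon E^c\longrightarrow F^c$ to be a homomorphism of Lie-Rinehart algebras, namely compatibility with the anchors and compatibility with the Lie brackets; the $A$-linearity of $\varphi$ is already part of the hypothesis that $\varphi$ is a homomorphism of pre-Lie-Rinehart algebras, and the underlying $A$-module map does not change when we pass to the sub-adjacent structures.

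First I would observe that the anchor condition $\theta_F\circ\varphi=\theta_E$ is one of the defining properties of a homomorphism of pre-Lie-Rinehart algebras, and that by the preceding proposition the anchor of $E^c$ is exactly $\theta_E$ and that of $F^c$ is exactly $\theta_F$. Hence the anchor compatibility required for a Lie-Rinehart algebra homomorphism holds with no further work.

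The only remaining point is bracket compatibility. Using that $\varphi$ is $A$-linear and in particular $\K$-linear, that the brackets on the sub-adjacent Lie-Rinehart algebras are $[X,Y]_E=X\cdot_E Y-Y\cdot_E X$ and $[\varphi(X),\varphi(Y)]_F=\varphi(X)\cdot_F\varphi(Y)-\varphi(Y)\cdot_F\varphi(X)$, and the multiplicativity $\varphi(X\cdot_E Y)=\varphi(X)\cdot_F\varphi(Y)$, I would compute, for all $X,Y\in E$,
\[
\varphi([X,Y]_E)=\varphi(X\cdot_E Y)-\varphi(Y\cdot_E X)=\varphi(X)\cdot_F\varphi(Y)-\varphi(Y)\cdot_F\varphi(X)=[\varphi(X),\varphi(Y)]_F.
\]

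Since the argument is a one-line unwinding of the definition of a pre-Lie-Rinehart algebra homomorphism together with the definition of the sub-adjacent bracket, there is essentially no obstacle here; the statement is an immediate consequence of the proposition constructing $E^c$ and $F^c$. The only mild subtlety worth noting is that one must invoke $\K$-linearity (rather than merely $A$-linearity) of $\varphi$ to split $\varphi(X\cdot_E Y-Y\cdot_E X)$ into the difference of two terms, but this is automatic since every $A$-linear map is $\K$-linear.
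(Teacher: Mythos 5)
Your proof is correct and follows essentially the same route as the paper's: the anchor compatibility is inherited verbatim from the pre-Lie-Rinehart homomorphism condition, and the bracket compatibility is the one-line computation $\varphi([X,Y]_E)=\varphi(X\cdot_E Y)-\varphi(Y\cdot_E X)=[\varphi(X),\varphi(Y)]_F$. Nothing is missing.
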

\emptycomment{
\begin{proof}
By a direct calculation, we have
\begin{eqnarray*}
  \varphi([X,Y]_E)&=&\varphi(X \cdot_E Y- Y \cdot_E X)=\varphi(X \cdot_E Y)-\varphi(Y \cdot_E X)\\&=&\varphi(X)\cdot_F\varphi(Y)-\varphi(Y)\cdot_F\varphi(X)\\
  &=&[\varphi(X),\varphi(Y)]_F.
\end{eqnarray*}
 Then $\varphi$ is  a Lie-Rinehart algebra homomorphism from the corresponding sub-adjacent Lie-Rinehart algebras $E^c$ to $F^c$.
\end{proof}}

\emptycomment{\begin{defi}
A pre-Lie-Rinehart algebra $(E,\cdot_E,\theta_E)$ is called a {\bf Novikov-Rinehart algebra} if it also satisfies
\begin{equation}
 ( X\cdot_E Y)\cdot_E Z=(X\cdot_E Z)\cdot_E Y,\quad\forall~X,Y,Z\in E.
\end{equation}
\end{defi}}

\begin{ex}
Any pre-Lie $A$-algebra is a pre-Lie-Rinehart algebra with the anchor $\theta=0$.
\end{ex}

\begin{ex}\label{ex:n-derivation}
  Let $\huaD_n=\{\partial_1,\partial_2,\cdots,\partial_n\}$ be a system of commuting derivations of $A$. We regard the derivations in the endomorphism algebra to be linearly independent. For any $a\in A$, the endomorphisms
  $$a\partial_i:A\longrightarrow A,\quad (a\partial_i)(b)=a\partial_i(b)$$
  are derivations of $A$. Denote by $A\huaD_n = \{\sum_{i=1}^na_i\partial_i\mid a_i\in A,\partial_i\in\huaD\}.$
  Define $\cdot:A\huaD_n\times A\huaD_n\longrightarrow A\huaD_n$ by
  $$(a\partial_i)\cdot (b\partial_j)=a\partial_i(b)\partial_j.$$
  Then $(A\huaD_n,\cdot,\Id)$ is a pre-Lie-Rinehart algebra.
\end{ex}

\begin{ex}
 For the algebra of polynomials $\K[t]$ with $\K=\Real$ or $\Comp$, for all $f\frac{d}{d t},g\frac{d}{d t}\in\Der(\K[t])$, define
  $$f\frac{d}{d t}\cdot g\frac{d}{d t}=fg'\frac{d}{d t},$$
  where $g'=\frac{d g}{d t}$.
 Then $(\Der(\K[t]),\cdot,\Id)$ is a pre-Lie-Rinehart algebra.
\end{ex}

\begin{ex}
   For the algebra of Laurent polynomials $\Comp[z,z^{-1}]$, for all $f\frac{d}{d z},g\frac{d}{d z}\in\Der(\Comp[z,z^{-1}])$ with $f,g\in\Comp[z,z^{-1}]$, define
  $$f\frac{d}{d z}\cdot g\frac{d}{d z}=fg'\frac{d}{d z},$$
  where $g'=\frac{d g}{d z}$.
 Then $(\Der(\Comp[z,z^{-1}]),\cdot,\Id)$ is a pre-Lie-Rinehart algebra.
\end{ex}

\begin{ex}
  Let $(A,\cdot)$ be a communicative associative algebra with a derivation $\partial$. Define
  \begin{eqnarray*}
    x\ast y&=&x\cdot (\partial y),\\
    \theta(x)(y)&=&x\cdot (\partial y),\quad\forall~x,y\in A.
  \end{eqnarray*}
 Then $(A,\ast,\theta)$ is a pre-Lie-Rinehart algebra.
\end{ex}

\begin{ex}
Let $(\huaL,\cdot_\huaL,\theta_\huaL)$ be a left-symmetric algebroid on a manifold $M$. Then $(E=\Gamma(\huaL),\cdot_\huaL,\theta_\huaL)$ is a pre-Lie-Rinehart algebra with $\K=\Real$ and $A=\CWM$ the algebra of smooth functions on a manifold $M$.
\end{ex}

\begin{defi}
Let $(\frkg,\cdot_{\frkg})$ be a pre-Lie $\K$-algebra. An {\bf action} of $\g$ on $A$ is a $\K$-linear map $\lambda:\g\longrightarrow\Der(A)$ satisfying
$$\lambda(x\cdot_{\frkg} y-y\cdot_{\frkg} x)=[\lambda(x),\lambda(y)]^c.$$
\end{defi}

\emptycomment{In particular, if $A=\K[t]$ and $\g$ is assumed to be a finite dimensional  pre-Lie algebra over $\K$ and its sub-adjacent Lie algebra $\g^c$ is solvable. By Proposition 2.3 in \cite{CLZ}, we have
\emptycomment{
\begin{pro}
  Let $\g$ be a pre-Lie $\K$-algebra and its sub-adjacent Lie algebra $\g^c$ be solvable. If $\lambda:\g\longrightarrow \Der(\K[t])$ is a nontrivial action, then $\Rank(\lambda)\leq 2$ and the action has the following two possible types:
  \begin{itemize}
\item[$\bullet$]{\rm Type 1:} $\Rank(\lambda)=1$. In this case, there exists a polynomial $h(t)\in \K[t]$ and a linear function $\mu\in\g^*$, such that
\begin{equation}
  \lambda(x)=\mu(x)h(t)\frac{d}{d t},\quad\forall~x\in\g.
\end{equation}
In this case, $\lambda$ defines an action if and only if $[\g,\g]^c\subset\Ker\mu$.
\item[$\bullet$] {\rm Type 2:} $\Rank(\lambda)=2$. In this case, there exists two independent vectors $x_0,y_0\in\g$ and an ideal $S\subset\g$ such that
    \begin{itemize}
    \item[\rm (1)] $\g=S\oplus<x_0>\oplus <y_0>;$
    \item[\rm (2)]$[\g,\g]\subset S\oplus <x_0>;$
    \item[\rm (3)]$[x_0,y_0]+(m-1)x_0\in S$, for some nonnegative integer $m\neq1$.
    \end{itemize}
   Then there are two linearly independent $\mu,\nu\in\g^*$ given by
   \begin{eqnarray*}
     \mu\mid_{S\oplus<y_0>}=0,&&\quad \mu(x_0)=1,\\
     \nu\mid_{S\oplus<x_0>}=0,&&\quad \nu(y_0)=1
  \end{eqnarray*}
  such that for a constant $c\in \K$,
   \begin{equation}
      \lambda(x)=\mu(x)(t+c)^m\frac{d}{d t}+\nu(x)(t+c)\frac{d}{d t},\quad\forall~x\in\g.
    \end{equation}
\end{itemize}
\end{pro}}

\begin{ex}\label{ex:pre-Lie action}
  Let $\g$ be a $4$-dimensional pre-Lie algebra with the basis $\{e_1,e_2,e_3,e_4\}$ given by
  \begin{eqnarray*}
    &&e_1\cdot e_2=e_2\cdot e_1=e_4,\quad e_2\cdot e_3=2e_1,\quad e_3\cdot e_2=e_1,\\
    &&e_4\cdot e_2=-e_2,\quad e_4\cdot e_3=e_3,\quad e_4\cdot e_4=-e_4.
  \end{eqnarray*}
  Then the corresponding sub-adjacent Lie algebra $\g^c$ structure is given by
  $$[e_2,e_3]=e_1,\quad [e_2,e_4]=e_2,\quad [e_3,e_4]=-e_3,$$
  which is $3$-step solvable. Let $\{e^*_1,e^*_2,e^*_3,e^*_4\}$ be the dual basis of $\{e_1,e_2,e_3,e_4\}$.
  \begin{itemize}
\item[\rm(1)]Let $\mu=e^*_4$ and for any polynomial $h(t)\in \K[t]$, then
   $$\lambda(x)=e^*_4(x)h(t)\frac{d}{d t},\quad\forall~x\in\g$$
   defines an action of $\g$ on $\K[t]$.
\item[\rm(2)] Let $\mu=e^*_3$, $\nu=e^*_4$ and $S=\{e_1,e_2\}$, then
   $$\lambda(x)=e^*_3(x)(t+c)^2\frac{d}{d t}+e^*_4(x)(t+c)\frac{d}{d t},\quad\forall~x\in\g$$
   defines an action of $\g$ on $\K[t]$.
\item[\rm(3)]Let $\mu=e^*_2$, $\nu=e^*_4$ and $S=\{e_1,e_3\}$, then
   $$\lambda(x)=e^*_2(x)\frac{d}{d t}+e^*_4(x)(t+c)\frac{d}{d t},\quad\forall~x\in\g$$
   also defines an action of $\g$ on $\K[t]$.
\end{itemize}
\end{ex}}

\begin{ex}\label{ex:transformation pre-Lie}
  Let $(\g,\cdot_\g)$ be a pre-Lie $\K$-algebra and $\lambda:\g\longrightarrow\Der(A)$ an action of $\g$ on $A$. Then the {\bf transformation pre-Lie-Rinehart algebra} is $E=A\otimes \g$ with pre-Lie operation
 $$(a\otimes x)\cdot_\lambda (b\otimes y) =ab\otimes x\cdot_\g y+a\lambda(x)(b)\otimes y$$
 and the anchor $\theta_\lambda(a\otimes x)(b)=a\lambda(x)(b)$ for $a,b\in A$ and $x,y\in\g$.
\end{ex}

Obviously, the sub-adjacent Lie-Rinehart algebra of a transformation pre-Lie-Rinehart algebra is a transformation Lie-Rinehart algebra.

\emptycomment{\begin{ex}
  With the notations in Example \ref{ex:pre-Lie action}. Then we obtain a transformation pre-Lie-Rinehart algebra $(\K[t]\otimes\g,\cdot_\lambda,\theta_\lambda)$ corresponding to the case (2) as follows:
   \begin{eqnarray*}
    (f(t)\otimes e_1)\cdot_\lambda (g(t)\otimes e_2)&=&(g(t)\otimes e_2)\cdot_\lambda (f(t)\otimes e_1)=f(t)g(t)\otimes e_4,\\
   (f(t)\otimes e_3)\cdot_\lambda (g(t)\otimes e_1)&=&f(t)g(t)'(t+c)^2\otimes e_1,\\
   (f(t)\otimes e_4)\cdot_\lambda (g(t)\otimes e_1)&=&f(t)g(t)'(t+c)\otimes e_1,\\
   (f(t)\otimes e_2)\cdot_\lambda (g(t)\otimes e_3)&=&2f(t)g(t)\otimes e_1,\\
   (g(t)\otimes e_3)\cdot_\lambda (f(t)\otimes e_2)&=&f(t)g(t)\otimes e_1+g(t)f(t)'(t+c)^2\otimes e_2,\\
    (f(t)\otimes e_4)\cdot_\lambda (g(t)\otimes e_2)&=&-f(t)g(t)\otimes e_2+f(t)g(t)'(t+c)\otimes e_2,\\
     (f(t)\otimes e_3)\cdot_\lambda (g(t)\otimes e_3)&=&f(t)g(t)'(t+c)^2\otimes e_3,\\
     (f(t)\otimes e_3)\cdot_\lambda (g(t)\otimes e_4)&=&f(t)g(t)'(t+c)^2\otimes e_4,\\
    (g(t)\otimes e_4)\cdot_\lambda (f(t)\otimes e_3)&=&f(t)g(t)\otimes e_3+g(t)f(t)'(t+c)\otimes e_3,\\
     (f(t)\otimes e_4)\cdot_\lambda (g(t)\otimes e_4)&=&-f(t)g(t)\otimes e_4+f(t)g(t)'(t+c)\otimes e_4,
  \end{eqnarray*}
  and \begin{eqnarray*}
  \theta_\lambda(1\otimes e_1)(f(t))&=&\theta_\lambda(1\otimes e_2)(f(t))=0,\\
  \theta_\lambda(1\otimes e_3)(f(t))&=&f(t)'(t+c)^2,\theta_\lambda(1\otimes e_4)(f(t))=f(t)'(t+c).
  \end{eqnarray*}
\end{ex}}

Now we give the notion of a derivation of a pre-Lie $A$-algebra, which will be used to construct pre-Lie-Rinehart algebras.
\begin{defi}
  Let $(E,\cdot_E)$ be a pre-Lie $A$-algebra. A {\bf derivation} of $E$ is a pair $(D,\delta)$, where $D:E\longrightarrow E$ is a $\K$-linear operator, $\delta\in\Der(A)$ and they satisfy the conditions:
  \begin{eqnarray*}
    D(X\cdot_E Y)&=&D(X)\cdot_E Y+X\cdot_E D(Y),\\
    D(a X)&=& \delta(a)X+a D(X),\quad\forall~a\in A,X,Y\in E.
  \end{eqnarray*}
\end{defi}

\begin{pro}
  For a derivation $(D,\delta)$ as above, $A\oplus E$ has a natural pre-Lie-Rinehart algebra structure over $A$ given by
  \begin{eqnarray*}
    \theta(a,X)&=&a \delta,\\
    (a,X)\ast (b,Y)&=&(a\delta(b),X\cdot_E Y+a D(Y)),\quad\forall~(a,X),(b,Y)\in A\oplus E.
  \end{eqnarray*}
\end{pro}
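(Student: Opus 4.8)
The plan is to verify the three defining axioms of a pre-Lie-Rinehart algebra directly for the triple $(A\oplus E,\ast,\theta)$, where $A\oplus E$ carries the diagonal $A$-module structure $c(a,X)=(ca,cX)$. The only inputs I would use are that $(E,\cdot_E)$ is a pre-Lie $A$-algebra---so $\cdot_E$ is $A$-bilinear and the associator $X\cdot_E(Y\cdot_E Z)-(X\cdot_E Y)\cdot_E Z$ is symmetric in $X,Y$---together with the two defining identities of the derivation $(D,\delta)$, namely $D(X\cdot_E Y)=D(X)\cdot_E Y+X\cdot_E D(Y)$ and $D(aX)=\delta(a)X+aD(X)$.

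First I would dispose of the structural preliminaries. Since $A$ is commutative and $\delta\in\Der(A)$, the map $a\delta$ is again a derivation of $A$, and $\theta(a,X)=a\delta$ is manifestly $A$-linear, so $\theta$ is a legitimate anchor into $\Der(A)$, while $\K$-bilinearity of $\ast$ is immediate. The second module axiom $\big(c(a,X)\big)\ast(b,Y)=c\big((a,X)\ast(b,Y)\big)$ follows at once from $A$-linearity of $\cdot_E$ in its first slot. For the first module axiom $(a,X)\ast\big(c(b,Y)\big)=c\big((a,X)\ast(b,Y)\big)+\theta(a,X)(c)(b,Y)$, I would expand the left-hand side using $\delta(cb)=\delta(c)b+c\delta(b)$, $A$-linearity of $\cdot_E$ in its second slot, and the Leibniz rule $D(cY)=\delta(c)Y+cD(Y)$; the extra terms $a\delta(c)b$ and $a\delta(c)Y$ thereby produced are exactly the components of $\theta(a,X)(c)(b,Y)=\big(a\delta(c)b,\,a\delta(c)Y\big)$. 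Finally the anchor-morphism axiom reduces to the one-line identity $[a\delta,b\delta]^c=(a\delta(b)-b\delta(a))\delta$ in $\Der(A)$, which matches $\theta$ applied to the $A$-component $a\delta(b)-b\delta(a)$ of $(a,X)\ast(b,Y)-(b,Y)\ast(a,X)$.

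The substantive step is the left-symmetry of the associator for $\ast$. Writing $u=(a,X)$, $v=(b,Y)$, $w=(c,Z)$, I would compute $u\ast(v\ast w)-(u\ast v)\ast w$ componentwise. Using $\delta(b\delta(c))=\delta(b)\delta(c)+b\delta^2(c)$, the $A$-component collapses to $ab\,\delta^2(c)$. For the $E$-component, expanding $X\cdot_E(Y\cdot_E Z+bD(Z))+aD(Y\cdot_E Z+bD(Z))$ via $A$-bilinearity of $\cdot_E$ and the two derivation identities, and subtracting the $E$-component $(X\cdot_E Y+aD(Y))\cdot_E Z+a\delta(b)D(Z)$ of $(u\ast v)\ast w$, the terms $a(D(Y)\cdot_E Z)$ and $a\delta(b)D(Z)$ cancel, leaving
$$\big(X\cdot_E(Y\cdot_E Z)-(X\cdot_E Y)\cdot_E Z\big)+b\,(X\cdot_E D(Z))+a\,(Y\cdot_E D(Z))+ab\,D^2(Z).$$
Thus the full associator of $u,v,w$ is $\big(ab\,\delta^2(c),\ (X,Y,Z)_E+b(X\cdot_E D(Z))+a(Y\cdot_E D(Z))+ab\,D^2(Z)\big)$, where $(X,Y,Z)_E$ denotes the associator in $E$.

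It then suffices to observe that this expression is symmetric under the exchange $u\leftrightarrow v$, that is, $a\leftrightarrow b$ and $X\leftrightarrow Y$. The $A$-component $ab\,\delta^2(c)$ and the second-order term $ab\,D^2(Z)$ are symmetric by commutativity of $A$; the two cross terms $b(X\cdot_E D(Z))$ and $a(Y\cdot_E D(Z))$ are simply interchanged; and $(X,Y,Z)_E=(Y,X,Z)_E$ is exactly the left-symmetry of the pre-Lie $A$-algebra $E$. Hence $(u,v,w)=(v,u,w)$, so $\ast$ is pre-Lie and the proposition follows. The main obstacle is purely organizational: keeping track of the $\delta^2$, $D^2$, and $D$-cross terms generated by the two derivation rules and recognizing that they are already symmetric in the first two arguments, so that no hypothesis on $D$ beyond being a derivation is needed.
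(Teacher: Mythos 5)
Your verification is correct, and it is exactly the ``direct calculation'' that the paper invokes without detail: the associator of $\ast$ does collapse to $\bigl(ab\,\delta^{2}(c),\ (X,Y,Z)_E+b(X\cdot_E D(Z))+a(Y\cdot_E D(Z))+ab\,D^{2}(Z)\bigr)$, which is visibly symmetric in the first two arguments, and the module and anchor axioms check out as you describe. No discrepancy with the paper's (omitted) argument.
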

\begin{proof}
It follows by a direct calculation.
\end{proof}

\begin{pro}
  Let $(E_i,\cdot_{E_i},\theta_{E_i})$ be two pre-Lie-Rinehart algebras over $A_i$ for $i=1,2$. Consider the $\Comp$-vector space
  $$\huaL=E_1\otimes A_2\oplus E_2\otimes A_1.$$
  Define the operation $\ast:\huaL\times \huaL\longrightarrow \huaL$  by
  \begin{eqnarray*}
    (X_1\otimes a_2)\ast (Y_1\otimes b_2)&=&X_1\cdot_{E_1} Y_1\otimes a_2b_2,\\
     (X_1\otimes a_2)\ast (a_1\otimes X_2)&=&\theta_{E_1}(X_1)(a_1)\otimes a_2X_2,\\
  (a_1\otimes X_2)\ast (X_1\otimes a_2)&=&a_1X_1\otimes\theta_{E_2}(X_2)(a_2),\\
   (a_1\otimes X_2)\ast (b_1\otimes Y_2)&=&a_1b_1\otimes X_2\cdot_{E_2} Y_2
  \end{eqnarray*}
  and the map $\theta_{\huaL}:\huaL\longrightarrow \Der(A_1\otimes A_2)$ by
  \begin{eqnarray*}
    \theta_\huaL(X_1\otimes a_2)(c_1\otimes c_2)&=&\theta_{E_1}(X_1)(c_1)\otimes a_2c_2,\\
    \theta_\huaL(a_1\otimes X_2)(c_1\otimes c_2)&=& a_1c_1\otimes\theta_{E_2}(X_2)(c_2),
  \end{eqnarray*}
  where $X_1,Y_1\in E_1$, $X_2,Y_2\in E_2$, $a_1,b_1,c_1\in A_1$ and $a_2,b_2,c_2\in A_2$. Then $(\huaL,\ast,\theta_\huaL)$ is a pre-Lie-Rinehart algebra over $A_1\otimes A_2$.
\end{pro}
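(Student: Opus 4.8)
The plan is to verify directly that the data $(\huaL,\ast,\theta_\huaL)$ satisfies the four defining axioms of a pre-Lie-Rinehart algebra over the commutative algebra $A_1\otimes A_2$. The proof is entirely computational, so the strategy is to organize the verification by the ``type'' of the arguments. Since $\huaL=E_1\otimes A_2\oplus E_2\otimes A_1$ is a direct sum, every element is a sum of a term of the form $X_1\otimes a_2$ and a term of the form $a_1\otimes X_2$; by $\K$-bilinearity of $\ast$ it suffices to check all axioms on such homogeneous generators, reducing each identity to a finite list of cases according to which summand each of the (two or three) arguments lies in.

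First I would confirm that $\theta_\huaL$ genuinely lands in $\Der(A_1\otimes A_2)$: each of the two formulas for $\theta_\huaL$ is built from $\theta_{E_1}(X_1)\in\Der(A_1)$ and $\theta_{E_2}(X_2)\in\Der(A_2)$ acting on one tensor factor while multiplying on the other, and one checks the Leibniz rule on the generators $c_1\otimes c_2$ using that $A_1\otimes A_2$ is generated by such elementary tensors with multiplication $(c_1\otimes c_2)(d_1\otimes d_2)=c_1d_1\otimes c_2d_2$. I would also note that $\theta_\huaL$ is $(A_1\otimes A_2)$-linear, which follows from the $A_i$-linearity of each $\theta_{E_i}$. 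Next, for the two module-compatibility axioms $X\ast(fY)=f(X\ast Y)+\theta_\huaL(X)(f)Y$ and $(fX)\ast Y=f(X\ast Y)$ with $f\in A_1\otimes A_2$, I would run through the four combinations of argument types; in each the identity collapses to the corresponding module axiom for $E_1$ (with the $A_2$-factor carried along as a multiplicative scalar) or for $E_2$ (with the $A_1$-factor carried along), together with the commutativity of $A_1$ and of $A_2$.

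The main work is the anchor-compatibility axiom $\theta_\huaL(X\ast Y-Y\ast X)=[\theta_\huaL(X),\theta_\huaL(Y)]^c$ and, implicitly, the symmetry of the associator $(X,Y,Z)$. For the anchor axiom I would again split into cases: when both arguments are in $E_1\otimes A_2$ the statement reduces to the anchor axiom for $(E_1,\cdot_{E_1},\theta_{E_1})$; symmetrically when both lie in $E_2\otimes A_1$; and the genuinely mixed case $X=X_1\otimes a_2$, $Y=b_1\otimes Y_2$ is where the cross-terms of the commutator must cancel against the two mixed products $X_1\cdot_{E_1}$-on-$A_1$ and $Y_2\cdot_{E_2}$-on-$A_2$, which they do because $\theta_{E_1}(X_1)$ acts only on the $A_1$-factor and $\theta_{E_2}(Y_2)$ only on the $A_2$-factor, so the two operators in $\Der(A_1\otimes A_2)$ commute and the commutator vanishes, matching $\theta_\huaL$ of the difference of mixed products.

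The hard part, and the place I would spend the most care, is the pre-Lie (left-symmetry) axiom, i.e.\ showing the associator is symmetric in its first two slots. With three generators there are eight type-combinations, and the delicate ones are those mixing the two summands, since a product such as $(X_1\otimes a_2)\ast(b_1\otimes Y_2)=\theta_{E_1}(X_1)(b_1)\otimes a_2 Y_2$ converts an $E_1$-term into an $E_2$-term and thereby feeds a derivation output into a subsequent product. I expect each such mixed associator to reduce, after expanding both orderings of the first two arguments, to a combination of the left-symmetry axiom for one of the $E_i$ together with the Leibniz/derivation identity for the relevant anchor; the cancellations are exactly those guaranteed by the two module axioms and the anchor-morphism property already verified above. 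Once every case is seen to close, the four axioms hold and $(\huaL,\ast,\theta_\huaL)$ is a pre-Lie-Rinehart algebra over $A_1\otimes A_2$, completing the proof.
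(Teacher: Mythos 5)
Your overall strategy is exactly what the paper does: the paper's entire proof is ``It follows by a direct calculation,'' and your case-by-case verification on homogeneous generators is the intended calculation; I checked several of the mixed associator cases and they close as you predict. One local claim in your sketch is wrong, though, and you should fix it before writing the computation out: in the mixed anchor case $X=X_1\otimes a_2$, $Y=b_1\otimes Y_2$, the commutator $[\theta_\huaL(X),\theta_\huaL(Y)]^c$ does \emph{not} vanish. The operator $\theta_\huaL(X_1\otimes a_2)$ is not just $\theta_{E_1}(X_1)$ acting on the first tensor factor; it is $\theta_{E_1}(X_1)\otimes m_{a_2}$, where $m_{a_2}$ is multiplication by $a_2$, and $m_{a_2}$ does not commute with $\theta_{E_2}(Y_2)$ (nor does $m_{b_1}$ with $\theta_{E_1}(X_1)$). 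The commutator works out to
\begin{equation*}
[\theta_\huaL(X),\theta_\huaL(Y)]^c(c_1\otimes c_2)=\theta_{E_1}(X_1)(b_1)c_1\otimes a_2\theta_{E_2}(Y_2)(c_2)-b_1\theta_{E_1}(X_1)(c_1)\otimes \theta_{E_2}(Y_2)(a_2)c_2,
\end{equation*}
which is precisely $\theta_\huaL(X\ast Y-Y\ast X)(c_1\otimes c_2)$, since $X\ast Y=\theta_{E_1}(X_1)(b_1)\otimes a_2Y_2$ and $Y\ast X=b_1X_1\otimes\theta_{E_2}(Y_2)(a_2)$ are both nonzero. So the axiom holds, but via the Leibniz terms produced by $m_{a_2}$ and $m_{b_1}$, not via vanishing of the commutator; as you stated it, the argument would force $\theta_\huaL(X\ast Y-Y\ast X)=0$, which is false. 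With that correction the proof is complete and coincides with the paper's.
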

\begin{proof}
 It follows by a direct calculation.
\end{proof}

\emptycomment{We will call the pre-Lie-Rinehart algebra constructed in this way an {\bf extended transformation pre-Lie-Rinehart algebra} of $E$ via a derivation $(D,\delta)$, and it will be denoted by $A\ltimes_{(D,\delta)} E$.}

\section{Form $r$-matrices to pre-Lie-Rinehart algebras }\label{sec:$r$-matrices}
Let $A$ be a commutative $\K$-algebra and  $\huaM$ be an $A$-module. A $\K$-linear map $D:A\longrightarrow \huaM$ is a called a {\bf derivation} if it satisfies the Leibniz rule: $$D(ab)=D(a)b+aD(b),\quad\forall~a,b\in A.$$
$\huaM$-valued derivations form an $A$-module denoted by $\Der(A,\huaM)$.

The functor $\huaM\longrightarrow \Der(A,\huaM)$ is (co)representable: there exists an $A$-module $\Omega^1$, unique up to a unique isomorphism, together with a natural isomorphism of $A$-modules
$$\Der(A,\huaM)\cong \Hom_A(\Omega^1,\huaM).$$
In particular, putting $\huaM=\Omega^1$, the identity map on the right hand side corresponds to the universal derivation
$$ d :A\longrightarrow \Omega^1,$$
 in which $\Omega^1$ can be written by a formal finite sums of terms of the form $a d  b$ for $a,b\in A$ obeying the Leibniz relation
$$ d (ab)= d (a)b+a d (b),\quad\forall~a,b\in A.$$
The module $\frkX^1:=\Der(A)$ forms a Lie algebra under the commutator bracket. By the universal property of $\Omega^1$ one has
$$\frkX^1\cong \Hom_A(\Omega^1,A).$$

Recall that a {\bf Poisson algebra} is a triple $(A,\cdot,\{-,-\})$, where $(A,\cdot)$ is a commutative associative algebra and $(A,\{-,-\})$ is a Lie algebra, such that the Leibniz rule $\{a,b\cdot c\}=\{a,b\}\cdot c+b\cdot\{a,c\}$ holds for all $a,b,c\in A$. For $a,b,u,v\in A$, define  $\pi:\Omega^1\otimes \Omega^1\longrightarrow A$ by
\begin{equation}
  \pi(a d  u\otimes b d  v)=ab\{u,v\}.
\end{equation}
It is not hard to see that $\pi$ is $A$-linear and skew-symmetric.

Define $\pi^\sharp:\Omega^1\longrightarrow \Hom_A(\Omega^1,A)\cong \Der(A)$ by
\begin{equation}
  \pi^\sharp(a  d  u )(v)=a\{u,v\}.
\end{equation}
Define $[-,-]_{\Omega^1}:\Omega^1\times \Omega^1\longrightarrow \Omega^1$ by
\begin{equation}\label{eq:bracket 1-forms}
  [a d  u,b d  v]_{\Omega^1}=a\{u,b\} d  v+b\{a,v\} d  u+ab d \{u,v\}.
\end{equation}
Then Huebschmann in \cite{Hueb1} proved that $(\Omega^1,[-,-]_{\Omega^1}, \pi^\sharp)$ is a Lie-Rinehart algebra and $\pi^\sharp$ is a morphism of Lie-Rinehart algebras between $\Omega^1$ and $\Der(A)$.

Let $(\g,[-,-]_\g)$ be a finite dimensional Lie $\K$-algebra and $\lambda:\g\longrightarrow \Der(A)$ a Lie $\K$-algebra action of $\g$ on $A$. Let $r\in\g\wedge \g$ be an $r$-matrix for the Lie $\K$-algebra $\g$, i.e. for $r=\sum_{i}x_i\wedge y_i$,
\begin{equation}
 \Courant{r,r}:=\sum_{i,j}\big([x_i,x_j]_\g\wedge y_i\wedge y_j+2x_i\wedge[y_i,x_j]_\g\wedge y_j+x_i\wedge x_j\wedge[y_i,y_j]_\g\big)=0.
\end{equation}
Define $\{-,-\}:A\times A\longrightarrow A$ by
\begin{equation}
  \{a,b\}=\sum_i\big(\lambda(x_i)(a)\lambda(y_i)(b)-\lambda(x_i)(b)\lambda(y_i)(a)\big),\quad\forall~a,b\in A.
\end{equation}
Then $(A,\{-,-\})$ is a Poisson algebra. The Jacobi identity follows the  relation:
\begin{eqnarray*}
  \{a,\{b,c\}\}+ \{c,\{a,b\}\}+ \{b,\{c,a\}\}=\half\lambda(\Courant{r,r})(a,b,c),\quad\forall~a,b,c\in A.
\end{eqnarray*}
For $a,b,u,v\in A$, $\pi_r:\Omega^1\otimes \Omega^1\longrightarrow A$ is given by
\begin{equation}
  \pi_r(a d  u\otimes b d  v)=ab\{u,v\}=ab\sum_i\big(\lambda(x_i)(u)\lambda(y_i)(v)-\lambda(x_i)(v)\lambda(y_i)(u)\big).
\end{equation}
$\pi_r^\sharp:\Omega^1\longrightarrow \Hom_A(\Omega^1,A)\cong \Der(A)$ is given by
\begin{equation}
  \pi_r^\sharp(a  d  u )(v)=a\{u,v\}=a\sum_i\big(\lambda(x_i)(u)\lambda(y_i)(v)-\lambda(x_i)(v)\lambda(y_i)(u)\big).
\end{equation}
Then $(\Omega^1,[-,-]_{\Omega^1}, \pi_r^\sharp)$ is a Lie-Rinehart algebra, where $[-,-]_{\Omega^1}$ is given by \eqref{eq:bracket 1-forms}.

Define $\cdot_{\Omega^1}:\Omega^1\times \Omega^1\longrightarrow \Omega^1$ by
\begin{equation}
  (a  d  u)\cdot_{\Omega^1}(b  d  v)=ab\sum_i\big(\lambda(x_i)(u) d \lambda(y_i)(v)-\lambda(y_i)(u) d \lambda(x_i)(v)\big)+a\{u,b\} d  v.
\end{equation}
\begin{pro}
  With the above notations, $(\Omega^1,\cdot_{\Omega^1}, \pi_r^\sharp)$ is a pre-Lie-Rinehart algebra and its sub-adjacent Lie-Rinehart algebra is just $(\Omega^1,[-,-]_{\Omega^1}, \pi_r^\sharp)$.
\end{pro}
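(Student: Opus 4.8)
The plan is to check the pre-Lie-Rinehart axioms for $(\Omega^1,\cdot_{\Omega^1},\pi_r^\sharp)$ directly on generators $a\,d u$, using two facts already available: that $(A,\{-,-\})$ is a Poisson algebra, so $\{-,-\}$ is a derivation in each slot and its Jacobiator equals $\half\lambda(\Courant{r,r})(-,-,-)=0$; and that $(\Omega^1,[-,-]_{\Omega^1},\pi_r^\sharp)$ is already a Lie-Rinehart algebra. One should first note that $\cdot_{\Omega^1}$ is well defined on $\Omega^1$, i.e. compatible with the Leibniz relations cutting out $\Omega^1$; this is routine, since every term of the defining formula is assembled from the well-defined operations $\lambda(x_i)$, $\{-,-\}$ and $d$.

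First I would dispose of the two $A$-linearity axioms. The identity $(aX)\cdot_{\Omega^1}Y=a(X\cdot_{\Omega^1}Y)$ is immediate, since the output coefficient of the defining formula is linear in the left coefficient. For $X\cdot_{\Omega^1}(aY)=a(X\cdot_{\Omega^1}Y)+\pi_r^\sharp(X)(a)Y$ with $X=c\,d u$, $Y=b\,d v$, the first summand of $(c\,d u)\cdot_{\Omega^1}(ab\,d v)$ only sees the coefficient and reproduces the first summand of $a(X\cdot_{\Omega^1}Y)$, while the term $c\{u,ab\}\,d v$ splits, by the Leibniz property of the Poisson bracket, as $a\,c\{u,b\}\,d v+cb\{u,a\}\,d v$; the surplus $cb\{u,a\}\,d v$ is exactly $\pi_r^\sharp(c\,d u)(a)\,(b\,d v)$.

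Next I would treat the anchor axiom together with the sub-adjacent claim. Computing $(a\,d u)\cdot_{\Omega^1}(b\,d v)-(b\,d v)\cdot_{\Omega^1}(a\,d u)$, the antisymmetric part of the first summand recombines, via $d\{u,v\}=\sum_i d(\lambda(x_i)(u)\lambda(y_i)(v)-\lambda(x_i)(v)\lambda(y_i)(u))$ and the Leibniz rule for $d$, into $ab\,d\{u,v\}$, while the remaining terms give $a\{u,b\}\,d v+b\{a,v\}\,d u$ (using $\{v,a\}=-\{a,v\}$); together these are precisely $[a\,d u,b\,d v]_{\Omega^1}$ from \eqref{eq:bracket 1-forms}. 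This proves $X\cdot_{\Omega^1}Y-Y\cdot_{\Omega^1}X=[X,Y]_{\Omega^1}$, which is the sub-adjacent assertion, and then $\pi_r^\sharp(X\cdot_{\Omega^1}Y-Y\cdot_{\Omega^1}X)=\pi_r^\sharp([X,Y]_{\Omega^1})=[\pi_r^\sharp(X),\pi_r^\sharp(Y)]^c$ follows at once from the known Lie-Rinehart structure.

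The main obstacle is the left-symmetry \eqref{eq:associator} of $\cdot_{\Omega^1}$. Writing $L_X=X\cdot_{\Omega^1}(-)$ and using that the commutator of $\cdot_{\Omega^1}$ equals $[-,-]_{\Omega^1}$, left-symmetry is equivalent to $L_{[X,Y]_{\Omega^1}}=[L_X,L_Y]^c$, i.e. to $L$ being a representation. I would verify this on generators $X=a\,d u$, $Y=b\,d v$, $Z=c\,d w$ by expanding $X\cdot_{\Omega^1}(Y\cdot_{\Omega^1}Z)-(X\cdot_{\Omega^1}Y)\cdot_{\Omega^1}Z$ and antisymmetrizing in $X,Y$. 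The resulting terms split into those quadratic in the anchor part, which close up by the Leibniz property of $\{-,-\}$, mixed terms, and iterated-derivative terms of the form $d\big(\lambda(x_i)\lambda(x_j)(\cdots)\big)$. The genuinely obstructing contributions are the ones that assemble into the Poisson Jacobiator $\{u,\{v,w\}\}+\{w,\{u,v\}\}+\{v,\{w,u\}\}$, which vanishes precisely because $\Courant{r,r}=0$; this is where the $r$-matrix hypothesis enters. The delicate point is the bookkeeping: grouping the iterated-derivative terms so that the Jacobiator appears as a single factor, while the remaining terms cancel in pairs by the derivation property of each $\lambda(x_i),\lambda(y_i)$ and the Leibniz rule for $d$.
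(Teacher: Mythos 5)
Your overall strategy is sound and lines up with the paper's: the heart of the matter is the left-symmetry of $\cdot_{\Omega^1}$, everything else being routine verification on generators. Two genuine differences are worth recording. First, you obtain the anchor condition and the sub-adjacent claim simultaneously by showing $X\cdot_{\Omega^1}Y-Y\cdot_{\Omega^1}X=[X,Y]_{\Omega^1}$ and then quoting the already-established Lie--Rinehart structure on $(\Omega^1,[-,-]_{\Omega^1},\pi_r^\sharp)$; the paper simply asserts conditions (i)--(iii) are easy, so your route is a small but real economy, and your computation of the commutator (reassembling $ab\,d\{u,v\}$ via the Leibniz rule for $d$) is correct. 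Second, for left-symmetry the paper first proves the $A$-multilinearity identity $(\alpha,\beta,a\gamma)-(\beta,\alpha,a\gamma)=(a\alpha,\beta,\gamma)-(\beta,a\alpha,\gamma)=a\big((\alpha,\beta,\gamma)-(\beta,\alpha,\gamma)\big)$, reduces to the coefficient-free generators $du,dv,dw$, and then cites Boucetta's Theorem 1.2; you instead keep the coefficients $a,b,c$ throughout and sketch the expansion yourself. Your version is more self-contained but carries a much heavier bookkeeping load; I would recommend adopting the paper's reduction before expanding. One imprecision to fix: the symmetrized associator $(du,dv,dw)-(dv,du,dw)$ is a $1$-form, so it cannot literally ``assemble into'' the scalar Jacobiator $\{u,\{v,w\}\}+\{w,\{u,v\}\}+\{v,\{w,u\}\}$. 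What actually survives the cancellation is a $1$-form-valued contraction of $\lambda^{\otimes 3}(\Courant{r,r})$, roughly of the shape $\sum\lambda(\cdot)(u)\,\lambda(\cdot)(v)\,d\lambda(\cdot)(w)$ over the components of $\Courant{r,r}$; this vanishes because $\Courant{r,r}=0$ in $\wedge^3\g$, which is strictly stronger than the Jacobi identity $\half\lambda(\Courant{r,r})(u,v,w)=0$ for the induced bracket. Since the paper's hypothesis is precisely $\Courant{r,r}=0$, your conclusion stands, but the obstruction should be named correctly.
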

\begin{proof}
It is easy to see that the operation $\cdot_{\Omega^1}$ satisfies the conditions (i)-(iii) in definition of pre-Lie-Rinehart-algebra. By direct calculation, for $\alpha,\beta,\gamma\in \Omega^1$ and $a\in A$, we have
\begin{eqnarray*}
(\alpha,\beta,a\gamma)-(\beta,\alpha,a\gamma)=(a\alpha,\beta,\gamma)-(\beta,a\alpha,\gamma)=a\big((\alpha,\beta,\gamma)-(\beta,\alpha,\gamma)\big).
\end{eqnarray*}
Thus we only need to show that $$( d  u, d  v, d  w)=( d  v, d  u, d  w)$$ holds for $u,v,w\in A$. By a similar proof of Theorem 1.2 in \cite{Boucetta}, it follows immediately.
\end{proof}
\begin{rmk}
  In \cite{Boucetta}, Boucetta found that by the action of a Lie algebra on a manifold, an $r$-matrix can induce a flat connection on the manifold. We generalize this result to the pre-Lie-Rinehart algebra structure.
\end{rmk}
\begin{ex}\label{ex:$r$-matrices}
  Consider the standard base of $\sln(2,\K)$ given by
  $$
h =\begin{bmatrix}1& 0\\ 0&-1\end{bmatrix},\quad e =\begin{bmatrix}0& 1\\ 0&0\end{bmatrix},\quad f =\begin{bmatrix}0& 0\\ 1&0\end{bmatrix},
$$
which are related by
$$[h,e]=2e,\quad [h,f]=-2f,\quad[e,f]=h.$$
It is straightforward to check that
$$r= r_1h\wedge e+r_2h\wedge f+r_3e\wedge f$$
 is an $r$-matrix for the Lie algebra $\sln(2,\K)$ if and only if $r_3^2-4r_1r_2=0$.

Define $\lambda:\sln(2,\K)\longrightarrow \Der(\K[x_1,x_2])$ by
$$\lambda(h)=x_1\partial_{x_1}-x_2\partial_{x_2},\quad \lambda(e)=x_1\partial_{x_2},\quad \lambda(f)=x_2\partial_{x_1}.$$
Then $\lambda$ is a Lie algebra action of $\sln(2,\K)$ on $\K[x_1,x_2]$.

The Poisson algebra on $\K[x_1,x_2]$ induced by the Lie algebra action $\lambda$ of $\sln(2,\K)$ on $\K[x_1,x_2]$ is given by
$$\{x_1,x_2\}=r_1x_1^2+r_2x_2^2-r_3x_1x_2,$$
and the corresponding Poisson structure is given by
$$\pi_r=(r_1x_1^2+r_2x_2^2-r_3x_1x_2)\partial_{x_1}\wedge \partial_{x_2}$$
with the condition $r_3^2-4r_1r_2=0$.

Note that  $\Omega^1$ is generated by the basis $\{\dM x_1,\dM x_2\}$ with coefficients in $\K[x_1,x_2]$. The pre-Lie-Rinehart algebra on $\Omega^1$ is determined by
\begin{eqnarray*}
  \dM x_1\cdot_{\Omega^1}  \dM x_2&=&(r_1x_1-r_3x_2)\dM x_1+r_2x_2\dM x_2,\quad \dM x_2\cdot_{\Omega^1}  \dM x_1=-r_1x_1\dM x_1+(-r_2x_2+r_3x_1)\dM x_2,\\
  \pi_r^\sharp(\dM x_1 )&=&(r_1x_1^2+r_2x_2^2-r_3x_1x_2)\partial_{x_2},\quad \pi_r^\sharp(\dM x_2 )=-(r_1x_1^2+r_2x_2^2-r_3x_1x_2)\partial_{x_1},
\end{eqnarray*}
where $r_3^2-4r_1r_2=0$.
\end{ex}

\section{Cohomologies and abelian extensions of pre-Lie-Rinehart algebras}\label{sec:Cohomology and extensions}
In this section, we study cohomologies and abelian extensions of pre-Lie-Rinehart algebras.
\subsection{Cohomologies  of pre-Lie-Rinehart algebras}
\begin{defi}
A {\bf representation} of a pre-Lie-Rinehart algebra $(E,\cdot_E,\theta_E)$ on an $A$-module $\huaE$  consists of a pair
$(\rho,\mu)$, where $(\rho;\huaE)$ is a representation
of the sub-adjacent Lie-Rinehart algebra $E^c$ on $\huaE $ and $\mu:E\longrightarrow \Hom_A(\huaE,\huaE)$ is an $A$-linear
map, such that for all $X,Y\in E$, we have
\begin{eqnarray}\label{representation condition 2}
 \rho(X)\mu(Y)-\mu(Y)\rho(X)=\mu(X\cdot_E Y)-\mu(Y)\mu(X).
\end{eqnarray}
Denote a representation by $(\huaE;\rho,\mu)$.
\end{defi}
Let $(E,\cdot_E,\theta_E)$ be a pre-Lie-Rinehart algebra. It is obvious that if $(\huaE;\rho)$ is a representation of the sub-adjacent Lie-Rinehart algebra $E^c$, then $(\huaE;\rho,0)$ is a representation of pre-Lie-Rinehart algebra $(E,\cdot_E,\theta_E)$.

\begin{pro}
Let $(E,\cdot_E,\theta_E)$ be a pre-Lie-Rinehart algebra and $(\huaE;\rho,\mu)$ its representation. Then $(F=E\oplus \huaE,\ast_F,\theta_F)$ is a pre-Lie-Rinehart algebra, where
$\ast_F$ and $\theta_F$ are given by
\begin{eqnarray*}
(X+u)\ast_F(Y+v)&=&X\cdot_E Y+\rho(X)v+\mu(Y)u,\\
\theta_F(X+u)&=&\theta_E(X),
\end{eqnarray*}
for all $X,Y\in E,u,v\in \huaE$. We denote this pre-Lie-Rinehart algebra by $E\ltimes_{\rho,\mu} \huaE$ and call it the {\bf semidirect product} of $E$ and $\huaE$.
\end{pro}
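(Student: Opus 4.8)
The plan is to verify directly the four defining conditions of a pre-Lie-Rinehart algebra for $(F,\ast_F,\theta_F)$: the two Leibniz-type identities, the anchor compatibility, and the left-symmetry (pre-Lie) property of $\ast_F$. Throughout I write elements of $F$ as $X+u$ with $X\in E$, $u\in\huaE$, and use that $\theta_F(X+u)=\theta_E(X)$ depends only on the $E$-part, so that $A$-linearity of $\theta_F$ is inherited from that of $\theta_E$.

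First I would dispatch the three ``Rinehart'' axioms, which are essentially bookkeeping. For condition (i), expanding $(X+u)\ast_F(a(Y+v))$ produces the terms $X\cdot_E(aY)$, $\rho(X)(av)$, and $\mu(aY)u$. Here $X\cdot_E(aY)=a(X\cdot_E Y)+\theta_E(X)(a)Y$ by condition (i) for $\cdot_E$; since $\rho(X)\in\dev(\huaE)$ is a derivation with symbol $\theta_E(X)$ (because $\rho$ is a homomorphism into the gauge algebra, so $\theta_\dev\circ\rho=\theta_E$), we get $\rho(X)(av)=a\rho(X)(v)+\theta_E(X)(a)v$; and $\mu(aY)u=a\mu(Y)u$ since $\mu$ is $A$-linear. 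Collecting terms yields exactly $a\big((X+u)\ast_F(Y+v)\big)+\theta_F(X+u)(a)(Y+v)$. Condition (ii) is quicker, using $(aX)\cdot_E Y=a(X\cdot_E Y)$ together with the $A$-linearity of $\rho$ and of each $\mu(Y)\in\Hom_A(\huaE,\huaE)$. Condition (iii) is immediate: the difference $(X+u)\ast_F(Y+v)-(Y+v)\ast_F(X+u)$ has $E$-part $X\cdot_E Y-Y\cdot_E X$, and $\theta_F$ annihilates the $\huaE$-part, so the claim reduces to condition (iii) for $\cdot_E$.

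The real content is the left-symmetry of $\ast_F$. I would compute the associator $(x_1,x_2,x_3)_F\triangleq x_1\ast_F(x_2\ast_F x_3)-(x_1\ast_F x_2)\ast_F x_3$ for $x_i=X_i+u_i$ and read off its components. The $E$-part is the associator $(X_1,X_2,X_3)_E$, which is symmetric in $X_1,X_2$ precisely because $\cdot_E$ is a pre-Lie operation. The $\huaE$-part is a sum of three terms, with coefficients
\begin{eqnarray*}
\text{on } u_3:&&\ \rho(X_1)\rho(X_2)-\rho(X_1\cdot_E X_2),\\
\text{on } u_2:&&\ \rho(X_1)\mu(X_3)-\mu(X_3)\rho(X_1),\\
\text{on } u_1:&&\ \mu(X_2\cdot_E X_3)-\mu(X_3)\mu(X_2).
\end{eqnarray*}
Swapping $x_1\leftrightarrow x_2$ interchanges $X_1\leftrightarrow X_2$ and $u_1\leftrightarrow u_2$ simultaneously, so I would match coefficients of $u_1,u_2,u_3$ in $(x_1,x_2,x_3)_F$ against those in $(x_2,x_1,x_3)_F$. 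The $u_3$-coefficients agree iff $\rho(X_1)\rho(X_2)-\rho(X_2)\rho(X_1)=\rho(X_1\cdot_E X_2-X_2\cdot_E X_1)=\rho([X_1,X_2]_E)$, i.e. iff $\rho$ is a representation of the sub-adjacent Lie-Rinehart algebra $E^c$. The $u_2$- and $u_1$-coefficients agree iff $\rho(X)\mu(Y)-\mu(Y)\rho(X)=\mu(X\cdot_E Y)-\mu(Y)\mu(X)$ for $(X,Y)=(X_1,X_3)$ and $(X,Y)=(X_2,X_3)$ respectively, which is exactly the defining identity \eqref{representation condition 2} of a representation $(\huaE;\rho,\mu)$.

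The main obstacle is organizational rather than conceptual: because the transposition $1\leftrightarrow2$ permutes the three $\huaE$-valued coefficients (it does not fix them slot by slot), one must track carefully which representation axiom governs which coefficient after the swap. Once the matching is set up correctly, each of the three identities falls out of one of the two conditions packaged into the notion of a representation, and together with the pre-Lie identity for $\cdot_E$ this establishes the left-symmetry of $\ast_F$, completing the verification.
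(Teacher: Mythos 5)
Your proof is correct and follows the same direct-verification route as the paper, which simply checks the two $A$-module (Leibniz) axioms by expanding $(X+u)\ast_F(a(Y+v))$ and $(a(X+u))\ast_F(Y+v)$ and leaves the rest implicit. You go further by explicitly computing the $\huaE$-components of the associator and matching them, coefficient by coefficient on $u_1,u_2,u_3$, against the two identities packaged into the definition of a representation $(\huaE;\rho,\mu)$ — this is exactly the content the paper suppresses, and your bookkeeping of how the swap $1\leftrightarrow 2$ permutes the three coefficients is accurate.
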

\emptycomment{We denote this by $E\ltimes_{\rho,\mu} \huaE$.
\begin{proof}
  For all $X,Y\in E,u,v\in \huaE$ and $a\in A$, we have
\begin{eqnarray*}
(X+u)\ast_F(a(Y+v))&=&X\cdot_E(aY)+\rho(X)(av)+\mu(aY)u\\
&=&a(X\cdot_EY)+\theta_E(X)(aY)+a\rho(X)(v)+\theta_E(X)(av)+a\mu(Y)u\\
&=&a(X+u)\ast_F(Y+v)+\theta_F(X)(a)(Y+v).\\
(a(X+u))\ast_F(Y+v)&=&aX\cdot_E Y+\rho(aX)v+\mu(Y)(au)\\
&=&aX\cdot_E Y+a\rho(X)v+a\mu(Y)(u)\\
&=&a(X+u)\ast_F(Y+v).
\end{eqnarray*}
Then $(F=E\oplus \huaE,\ast_F,\theta_F)$ is a pre-Lie-Rinehart algebra.
\end{proof}}

\emptycomment{Let $(E,\cdot_E,\theta_E)$ be a pre-Lie-Rinehart algebra, and
$(\huaE;\rho,\mu)$ be its representation.
Let $(\rho^*;\huaE^*)$ be the dual representation of
$(\rho;\huaE)$, and $\mu^*:E\longrightarrow \Hom(E^*,E^*)$ is defined by
$\langle \mu^*(X) \xi,u\rangle=-\langle \mu(X)u,\xi \rangle,\forall~\xi\in E^*.$

\begin{pro}\label{prop:representation}
With the above notations, we have
\begin{itemize}
\item[$\rm(i)$] $(\huaE;\rho-\mu)$ is a representation of the sub-adjacent Lie-Rinehart algebra $E^c$.
\item[$\rm(ii)$] $(\huaE^*;\rho^*-\mu^*,-\mu^*)$ is a representation of the pre-Lie-Rinehart algebra $(E,\cdot_E,\theta_E)$.
\end{itemize}
\end{pro}}

Now we define the cohomology complex for a pre-Lie-Rinehart algebra $(E,\cdot_E,\theta_E)$ with a representation $(\huaE;\rho,\mu)$. Denote the set of $(n+1)$-cochains by
$$C^{n+1}(E,\huaE)=\Hom_A(\wedge^{n}E\otimes E,\huaE),\
n\geq 0.$$  For all $\varphi\in C^{n}(E,\huaE)$, define $\delta\varphi\in\Hom_A(\otimes^{n+1}E,\huaE)$ is given by
 \begin{eqnarray*}
\delta\varphi(X_1,\cdots,X_{n+1})&=&\sum_{i=1}^{n}(-1)^{i+1}\rho(X_i)\omega(X_1,\cdots,\hat{X_i},\cdots,X_{n+1})\\
 &&+\sum_{i=1}^{n}(-1)^{i+1}\mu(X_{n+1})\omega(X_1,\cdots,\hat{X_i},\cdots,X_n,X_i)\\
 &&-\sum_{i=1}^{n}(-1)^{i+1}\varphi(X_1,\cdots,\hat{X_i},\cdots,X_n,X_i\cdot_E X_{n+1})\\
 &&+\sum_{1\leq i<j\leq n}(-1)^{i+j}\varphi([X_i,X_j]_E,X_1,\cdots,\hat{X_i},\cdots,\hat{X_j},\cdots,X_{n+1})
\end{eqnarray*}
for all $X_i\in E,i=1,\cdots,n+1$.
\begin{pro}
 With the above notations, $\delta$ is a $\K$-linear map and $\delta\varphi\in C^{n+1}(E,\huaE)$ for all $\varphi\in C^{n}(E,\huaE)$. Furthermore, $\delta^2=0$. Thus, we have a well-defined cochain complex $(\bigoplus_{n\geq0}C^{n+1}(E,\huaE),\delta)$ and denote the $n$-th cohomology group of $(\bigoplus_{n\geq0}C^{n+1}(E,\huaE),\delta)$ by $H_{\PRin}^n(E,\huaE)$.
\end{pro}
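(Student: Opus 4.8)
The plan is to avoid a direct (and very long) expansion of $\delta^2$ by identifying the complex $(\bigoplus_{n}C^{n+1}(E,\huaE),\delta)$ with the Lie-Rinehart (Chevalley-Eilenberg-Rinehart) complex of the sub-adjacent Lie-Rinehart algebra $E^c$ with coefficients in a suitable module, for which $\dM^2=0$ has already been recorded in Section~\ref{sec:MC-RRB-operator}. Concretely, I would use the currying isomorphism of $A$-modules
$$\Phi\colon C^{n+1}(E,\huaE)=\Hom_A(\wedge^nE\otimes E,\huaE)\xrightarrow{\ \cong\ }\Omega^n(E^c,\Hom_A(E,\huaE))=\Hom_A(\wedge^nE,\Hom_A(E,\huaE)),$$
given by $\Phi(\varphi)(X_1,\dots,X_n)(X_{n+1})=\varphi(X_1,\dots,X_n,X_{n+1})$, and equip $\Hom_A(E,\huaE)$ with the operator $\bar\rho\colon E^c\to\dev(\Hom_A(E,\huaE))$ defined by
$$(\bar\rho(X)g)(Y)=\rho(X)(g(Y))-g(X\cdot_E Y)+\mu(Y)(g(X)),\quad X,Y\in E,\ g\in\Hom_A(E,\huaE).$$

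Then I would verify that $(\Hom_A(E,\huaE);\bar\rho)$ is a representation of the Lie-Rinehart algebra $E^c$. First, $\bar\rho(X)g\in\Hom_A(E,\huaE)$: feeding $aY$ produces, from the derivation property of $\rho$ and the axiom $X\cdot_E(aY)=a(X\cdot_E Y)+\theta_E(X)(a)Y$, two terms $\theta_E(X)(a)g(Y)$ of opposite sign that cancel. The same two axioms show $\bar\rho(X)$ is a derivation of the $A$-module $\Hom_A(E,\huaE)$ with symbol $\theta_E(X)$, i.e. $\bar\rho(X)(ag)=a\bar\rho(X)g+\theta_E(X)(a)g$, so that $\theta_\dev\circ\bar\rho=\theta_E$; and the $A$-linearity of $\rho$ and of $\mu$, together with $(aX)\cdot_E Y=a(X\cdot_E Y)$, give $\bar\rho(aX)=a\bar\rho(X)$. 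The only substantial point is that $\bar\rho$ preserves brackets, $\bar\rho([X,Y]_E)=\bar\rho(X)\bar\rho(Y)-\bar\rho(Y)\bar\rho(X)$. Expanding $(\bar\rho(X)\bar\rho(Y)g)(Z)$ and antisymmetrizing in $X,Y$, the pure-$\rho$ terms combine via $[\rho(X),\rho(Y)]=\rho([X,Y]_E)$ (from $(\huaE;\rho)$ being an $E^c$-module), the $g$-terms collapse using the pre-Lie identity in the form $X\cdot_E(Y\cdot_E Z)-Y\cdot_E(X\cdot_E Z)=[X,Y]_E\cdot_E Z$, and every mixed $\rho$-$\mu$ term cancels against its partner exactly because of the representation compatibility \eqref{representation condition 2}, $\rho(X)\mu(Z)-\mu(Z)\rho(X)=\mu(X\cdot_E Z)-\mu(Z)\mu(X)$; what survives is precisely $(\bar\rho([X,Y]_E)g)(Z)$.

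With $\bar\rho$ in hand, the last step is to check $\Phi\circ\delta=\dM\circ\Phi$, where $\dM$ is the Lie-Rinehart coboundary of $E^c$ with values in $(\Hom_A(E,\huaE);\bar\rho)$. Evaluating $\dM(\Phi\varphi)(X_1,\dots,X_n)$ at $X_{n+1}$, the bracket part $\sum_{i<j}(-1)^{i+j}(\Phi\varphi)([X_i,X_j]_E,\dots)(X_{n+1})$ reproduces verbatim the fourth sum in the definition of $\delta\varphi$, while each term $(-1)^{i+1}(\bar\rho(X_i)(\Phi\varphi)(\dots\widehat{X_i}\dots))(X_{n+1})$ unfolds, by the very definition of $\bar\rho$, into the $i$-th summands of the first three sums of $\delta\varphi$. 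Hence $\Phi$ is an isomorphism of graded $A$-modules intertwining $\delta$ and $\dM$. Since $\dM$ maps $\Omega^n$ into $\Omega^{n+1}$ and satisfies $\dM^2=0$ (recalled in Section~\ref{sec:MC-RRB-operator}), pulling back along $\Phi$ gives at once that $\delta$ is $\K$-linear, that $\delta\varphi\in C^{n+1}(E,\huaE)$ (in particular $\delta\varphi$ is $A$-multilinear and skew in its first $n$ entries), and that $\delta^2=0$; the cochain complex and the cohomology $H^n_{\PRin}(E,\huaE)$ are then well defined.

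I expect the homomorphism property $\bar\rho([X,Y]_E)=\bar\rho(X)\bar\rho(Y)-\bar\rho(Y)\bar\rho(X)$ to be the main obstacle, since it is the one place where both representation axioms and the pre-Lie identity must be used simultaneously and where the numerous $\rho$-$\mu$ cross terms must be seen to cancel; the remaining verifications are routine manipulations with the pre-Lie-Rinehart axioms.
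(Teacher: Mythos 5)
Your argument is correct, but it is not the route the paper takes for this statement: the paper's proof is a one-line deferral to the direct verification of Proposition 5.7 in \cite{LiuShengBaiChen} (a term-by-term expansion of $\delta^2$ using the pre-Lie-Rinehart axioms). What you propose instead --- currying $C^{n+1}(E,\huaE)=\Hom_A(\wedge^nE\otimes E,\huaE)$ into $\Omega^n(E^c,\Hom_A(E,\huaE))$, checking that $\bar\rho$ is an $E^c$-representation on $\Hom_A(E,\huaE)=C^1(E,\huaE)$, and transporting $\dM^2=0$ across the chain isomorphism --- is exactly the content of the paper's own subsequent Lemma on $\varrho$ and Theorem \ref{thm:cohomology of pre-Lie and Lie}, where your $\bar\rho$ is the paper's $\varrho$ and your $\Phi$ is the inverse of the paper's map $H$. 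There is no circularity in using that material here, since the bracket-preservation of $\varrho$ and the intertwining identity $H\circ\dM=\delta\circ H$ depend only on the definitions and not on $\delta^2=0$; your cancellation of the $\rho$--$\mu$ cross terms via \eqref{representation condition 2} and of the $g$-terms via the pre-Lie identity checks out. The trade-off is clear: the paper's cited route is self-contained but computationally heavy, while yours is shorter and yields the isomorphism $H_{\PRin}^{n+1}(E,\huaE)\cong H_{\Rin}^n(E^c,C^1(E,\huaE))$ for free, at the cost of leaning on the standard (and here only recalled, not proved) fact that the Lie-Rinehart complex $(\bigoplus_k\Omega^k(E^c,-),\dM)$ is well defined with $\dM^2=0$. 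One presentational nitpick: to conclude $\delta\varphi\in C^{n+1}(E,\huaE)$ you should phrase the last step as ``$\Phi^{-1}\circ\dM\circ\Phi$ agrees with the displayed formula for $\delta$ as a map into $\Hom_\K(\otimes^{n+1}E,\huaE)$,'' since a priori $\delta\varphi$ is not known to lie in the domain of $\Phi$; this is implicit in your unfolding argument but worth making explicit.
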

\begin{proof}
  The proof is similar to the proof of Proposition 5.7 in \cite{LiuShengBaiChen}.
\end{proof}

In the following, we study the relations between the cohomology groups of pre-Lie-Rinehart algebras and Lie-Rinehart algebras.

Given a representation $(\huaE;\rho,\mu)$ on the pre-Lie-Rinehart algebra $(E,\cdot_E,\theta_E)$, for $X\in E$, define $\varrho(X):C^{1}(E,\huaE)\longrightarrow C^{1}(E,\huaE)$ by
\begin{eqnarray}
\varrho(X)(\psi)(Y)&=&\rho(X)\psi(Y)+\mu(Y)\psi(X)-\psi(X\cdot_E Y)
\end{eqnarray}
for $\psi\in C^{1}(E,\huaE)$ and $Y\in E$.
\begin{lem}
  With the above notations, $\varrho$ gives a representation of the sub-adjacent Lie-Rinehart algebra $E^c$ on $C^{1}(E,\huaE)$.
\end{lem}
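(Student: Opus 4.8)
The plan is to verify that $\varrho$ defines a homomorphism of Lie-Rinehart algebras $\varrho\colon E^c\to\dev(C^1(E,\huaE))$; concretely, I must show that for each $X\in E$ the operator $\varrho(X)$ is a derivation of the $A$-module $C^1(E,\huaE)$ whose symbol is $\theta_E(X)$, that $X\mapsto\varrho(X)$ is $A$-linear, and that $\varrho$ carries $[-,-]_E$ to the commutator bracket on $\dev(C^1(E,\huaE))$. The derivation property (with symbol $\theta_E(X)$) simultaneously yields the anchor compatibility $\theta_\dev\circ\varrho=\theta_E$, so these are exactly the conditions defining a representation.

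First I would check that $\varrho(X)(\psi)$ is again $A$-linear in its argument, so that $\varrho(X)$ maps $C^1(E,\huaE)$ into itself. Evaluating $\varrho(X)(\psi)(aY)$ and using that $\rho(X)$ is a derivation with symbol $\theta_E(X)$, that $\mu$ and $\psi$ are $A$-linear, and the identity $X\cdot_E(aY)=a(X\cdot_E Y)+\theta_E(X)(a)Y$, the two contributions of the form $\theta_E(X)(a)\psi(Y)$ cancel and one is left with $a\,\varrho(X)(\psi)(Y)$. A nearly identical computation, now placing the coefficient on $\psi$ instead of on $Y$, gives $\varrho(X)(a\psi)=a\,\varrho(X)(\psi)+\theta_E(X)(a)\psi$, so $\big(\varrho(X),\theta_E(X)\big)$ is a derivation of $C^1(E,\huaE)$. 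Finally, the $A$-linearity $\varrho(aX)=a\,\varrho(X)$ follows from $\rho(aX)=a\rho(X)$, the $A$-linearity of $\mu$ and $\psi$, and $(aX)\cdot_E Y=a(X\cdot_E Y)$.

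The main step, and the only one requiring genuine computation, is the bracket relation $\varrho([X,Y]_E)=\varrho(X)\varrho(Y)-\varrho(Y)\varrho(X)$. I would expand $\big(\varrho(X)\varrho(Y)-\varrho(Y)\varrho(X)\big)(\psi)(Z)$ fully, getting nine terms from each composite. The terms in which $\rho(X)$ or $\rho(Y)$ acts on $\psi$ evaluated at a $\cdot_E$-product pair off and cancel at once; the terms carrying $\psi(X)$ or $\psi(Y)$ cancel after substituting the representation condition \eqref{representation condition 2}, applied to the pairs $(X,Z)$ and $(Y,Z)$, to rewrite each commutator $[\rho(\cdot),\mu(Z)]$ as $\mu(\,\cdot\,\cdot_E Z)-\mu(Z)\mu(\cdot)$. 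Exactly three families then survive: $\rho([X,Y]_E)\psi(Z)$ (from the $\rho$-$\rho$ commutator, using that $\rho$ is a representation of $E^c$), the term $\mu(Z)\psi([X,Y]_E)$, and the term $\psi\big(Y\cdot_E(X\cdot_E Z)-X\cdot_E(Y\cdot_E Z)\big)$.

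Comparing this with
\[
\varrho([X,Y]_E)(\psi)(Z)=\rho([X,Y]_E)\psi(Z)+\mu(Z)\psi([X,Y]_E)-\psi([X,Y]_E\cdot_E Z),
\]
it remains only to identify the arguments
\[
Y\cdot_E(X\cdot_E Z)-X\cdot_E(Y\cdot_E Z)=-[X,Y]_E\cdot_E Z=-(X\cdot_E Y-Y\cdot_E X)\cdot_E Z,
\]
which, after moving all terms to one side, is precisely the left-symmetry $(X,Y,Z)=(Y,X,Z)$ of the associator \eqref{eq:associator}. Hence the bracket relation holds. I expect the bookkeeping of the nine-plus-nine term expansion to be the only real obstacle; the cancellations themselves are forced by \eqref{representation condition 2}, by $[\rho(X),\rho(Y)]=\rho([X,Y]_E)$, and by the pre-Lie identity, with everything else following from the module axioms. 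Since all defining conditions of a representation of $E^c$ on $C^1(E,\huaE)$ are thereby established, the lemma follows.
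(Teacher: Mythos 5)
Your proposal is correct and follows essentially the same route as the paper: the $A$-linearity and derivation properties are routine, and the bracket relation is obtained by expanding the commutator and invoking $[\rho(X),\rho(Y)]^c=\rho([X,Y]_E)$, the representation condition \eqref{representation condition 2} applied to the pairs $(X,Z)$ and $(Y,Z)$, and the left-symmetry of the associator, exactly as the paper does. The only difference is that you additionally verify that $\varrho(X)(\psi)$ is $A$-linear in its argument (so that $\varrho(X)$ preserves $C^1(E,\huaE)$), a point the paper leaves implicit.
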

\begin{proof}
 It is easy to see that
\begin{eqnarray*}
  \varrho(aX)=a\varrho(X),\quad  \varrho(X)(a\psi)=a\varrho(X)(\psi)+\theta_E(X)(a\psi).
\end{eqnarray*}
Next we show that $\varrho([X,Y]_E)=[\varrho(X),\sigma(Y)]^c$. In fact, since $(\huaE;\rho,\mu)$ a representation on the pre-Lie-Rinehart algebra $(E,\cdot_E,\theta_E)$, we have
\begin{eqnarray*}
&&\varrho([X,Y]_E)(\psi)(Z)-[\varrho(X),\sigma(Y)]^c(\psi)(Z)\\
&=&\big(\rho([X,Y]_E-[\rho(X),\rho(Y)]^c\big)(\psi(Z))-\psi\big([X,Y]_E\cdot_E Z+Y\cdot_E  (X\cdot_E Z)\\
&&-X\cdot_E  (Y\cdot_E Z)\big)-\big(\rho(X)\mu(Z)+\mu(Z)\rho(X)-\mu(X\cdot_E Z)+\mu(Z)\mu(X)\big)(\psi(Y))\\
&&+\big(\rho(Y)\mu(Z)+\mu(Z)\rho(Y)-\mu(Y\cdot_E Z)+\mu(Z)\mu(Y)\big)(\psi(X))\\
&=&0.
\end{eqnarray*}
Thus $\varrho$ is a representation of the sub-adjacent Lie-Rinehart algebra $E^c$ on $C^{1}(E,\huaE)$.
\end{proof}

\begin{thm}\label{thm:cohomology of pre-Lie and Lie}
 Let $(E,\cdot_E,\theta_E)$ be a pre-Lie-Rinehart algebra and $(\huaE;\rho,\mu)$ a representation. Define $H:\Omega^n(E,C^{1}(E,\huaE))\longrightarrow C^{n+1}(E,\huaE)$ by
 $$H(\psi)(X_1,\ldots,X_{n+1})=\psi(X_1,\ldots,X_{n})(X_{n+1}),\quad \psi \in\Omega^n(E,C^{1}(E,\huaE)),X_i\in E,$$
 which induces an isomorphism of cochain complexes $\Omega^\ast(E,C^{1}(E,\huaE))$ associated to the representation $\varrho$ of $E^c$ on $C^{1}(E,\huaE)$ and $ C^{\ast}(E,\huaE)$ associated to the representations $(\rho,\mu)$ of $E$ on $\huaE$. In particular,
 $$H_{\PRin}^{n+1}(E,\huaE)\cong H_{\Rin}^n(E^c,C^{1}(E,\huaE)),\quad n>0.$$
\end{thm}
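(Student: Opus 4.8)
The plan is to verify that the explicitly defined map $H$ is a cochain isomorphism and then read off the cohomology isomorphism as an immediate corollary. First I would check that $H$ is a well-defined $A$-module isomorphism in each degree. Unwinding the definitions, an element $\psi\in\Omega^n(E,C^1(E,\huaE))=\Hom_A(\wedge^n E, \Hom_A(E,\huaE))$ assigns to each $n$-tuple $(X_1,\dots,X_n)$ a $1$-cochain $\psi(X_1,\dots,X_n)\in C^1(E,\huaE)=\Hom_A(E,\huaE)$, and $H(\psi)$ evaluates this $1$-cochain on the extra argument $X_{n+1}$. Since $C^{n+1}(E,\huaE)=\Hom_A(\wedge^n E\otimes E,\huaE)$, the map $H$ is precisely the canonical $A$-linear "uncurrying" adjunction isomorphism $\Hom_A(\wedge^n E,\Hom_A(E,\huaE))\cong \Hom_A(\wedge^n E\otimes E,\huaE)$, with inverse given by currying; I would state this and note that $A$-linearity in all slots is automatic from the construction.

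The substantive step is to show $H$ intertwines the two coboundary operators, i.e. $H\circ\dM=\delta\circ H$, where $\dM$ is the Lie-Rinehart coboundary on $\Omega^\ast(E^c,C^1(E,\huaE))$ attached to the representation $\varrho$ and $\delta$ is the pre-Lie-Rinehart coboundary on $C^\ast(E,\huaE)$ attached to $(\rho,\mu)$. Here I would apply $\dM$ (using the explicit Lie-Rinehart formula with $\rho$ replaced by $\varrho$) to $\psi$, then evaluate $H(\dM\psi)$ on $(X_1,\dots,X_{n+1},X_{n+2})$ by feeding the last argument into the resulting $1$-cochain. The key is to expand $\varrho(X_i)\big(\psi(\dots)\big)(X_{n+2})$ using the defining formula
$$\varrho(X)(\eta)(Z)=\rho(X)\eta(Z)+\mu(Z)\eta(X)-\eta(X\cdot_E Z),$$
and to compare term by term with the four sums in the definition of $\delta H(\psi)$. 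The $\rho(X_i)\eta(X_{n+2})$ pieces match the first sum of $\delta$; the $\mu(X_{n+2})\eta(X_i)$ pieces reproduce the second ($\mu$) sum; the $-\eta(X_i\cdot_E X_{n+2})$ pieces reproduce the third sum; and the bracket sum $\varpi([X_i,X_j]_E,\dots)$ of the Lie-Rinehart differential maps directly onto the fourth sum of $\delta$. Tracking the signs and the insertion of $X_i$ into the last slot (the $\widehat{X_i}$ conventions and the $(-1)^{i+1}$ factors) is exactly the bookkeeping needed to match the two formulas, and this is where I expect the only real work to lie.

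The main obstacle is therefore purely combinatorial: reconciling the index conventions of the two complexes. In $\Omega^\ast(E^c,-)$ the coefficient module is itself the space of $1$-cochains, so each Lie-Rinehart term secretly carries an extra hidden argument $X_{n+2}$, and one must be careful that the "$A$-module homomorphism" character of $\varrho$ and the Leibniz-type corrections $\theta_E(X)(a)$ are absorbed correctly when passing between the curried and uncurried descriptions. Once $H\circ\dM=\delta\circ H$ is established and $H$ is shown to be a degreewise isomorphism, it is a cochain isomorphism, so it induces isomorphisms on cohomology in every degree. Reading off degree $n$ on the source side (which lands in degree $n+1$ on the target side) then yields
$$H_{\PRin}^{n+1}(E,\huaE)\cong H_{\Rin}^n(E^c,C^1(E,\huaE)),\qquad n>0,$$
completing the proof. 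I would remark that the case $n=0$ is excluded precisely because $H$ shifts degrees by one and there is no $\Omega^{-1}$ term to match the bottom of the pre-Lie-Rinehart complex.
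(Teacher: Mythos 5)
Your proposal is correct and follows essentially the same route as the paper: the paper likewise treats $H$ as the evident (un)currying identification and establishes the commutativity $H\circ\dM=\delta\circ H$ by expanding $\varrho(X_i)\psi(\ldots)(X_{n+2})$ via the defining formula for $\varrho$ and matching the resulting terms with the four sums in $\delta H(\psi)$. The cohomology isomorphism is then read off exactly as you describe.
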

\begin{proof}
First, we need to prove that for any $n\geq 0$ the following diagram is commutative
$$
\xymatrix{
 \ar[rr]^{\dM}\Omega^n(E,C^{1}(E,\huaE)) \ar[d]^{F}
                &&\Omega^{n+1}(E,C^{1}(E,\huaE)) \ar[d]^{F}  \\
 \ar[rr]^{\delta} C^{n+1}(E,\huaE)
               && C^{n+2}(E,\huaE).   }
$$
For $\psi \in\Omega^n(E,C^{1}(E,\huaE))$, we have
\begin{eqnarray*}
 H(\dM \psi)(X_1,\ldots,X_{n+2})&=&(\dM \psi(X_1,\ldots,X_{n+1}))(X_{n+2})\\
  &=&\sum_{i=1}^{n+1}(-1)^{i+1}\big(\varrho(X_i)\psi(X_1\cdots,\widehat{X_i},\cdots,X_{n+1})\big)(X_{n+2})\\
  &&+\sum_{1\leq i<j\leq n+1}(-1)^{i+j}\big(\psi([X_i,X_j]_E,X_1\cdots,\widehat{X_i},\cdots,\widehat{X_j},\cdots,X_{n+1})\big)(X_{n+2})\\
  &=&\sum_{i=1}^{n+1}(-1)^{i+1}\rho(X_i)H(\psi)(X_1\cdots,\widehat{X_i},\cdots,X_{n+1},X_{n+2})\\
  &&+\sum_{i=1}^{n+1}(-1)^{i+1}\mu(X_{n+2})H(\psi)(X_1\cdots,\widehat{X_i},\cdots,X_{n+1},X_{i})\\
  &&-\sum_{i=1}^{n+1}(-1)^{i+1}H(\psi)(X_1\cdots,\widehat{X_i},\cdots,X_{n+1},X_i\cdot_EX_{n+2})\\
  &&+\sum_{1\leq i<j\leq n+1}(-1)^{i+j}H(\psi)([X_i,X_j]_E,X_1\cdots,\widehat{X_i},\cdots,\widehat{X_j},\cdots,X_{n+1},X_{n+2})\\
  &=&\delta H(\psi)(X_1,\ldots,X_{n+2}),
\end{eqnarray*}
which implies that $H\circ \dM \psi=\delta\circ  H(\psi).$ The second conclusion follows immediately.
\end{proof}

\subsection{Abelian  extensions of pre-Lie-Rinehart algebras}
\begin{defi}\label{defi:isomorphic}
\begin{itemize}
\item[\rm(1)] Let $E'$, $E$, $E''$ be pre-Lie-Rinehart algebras over $A$. Assume that the anchor map of pre-Lie-Rinehart algebra $E'$ is trivial. An {\bf abelian extension} of pre-Lie-Rinehart algebras is a short exact sequence of pre-Lie-Rinehart algebras:
$$ 0\longrightarrow E'\stackrel{\imath}{\longrightarrow} E\stackrel{p}\longrightarrow E''\longrightarrow0.$$
We say that $E$ is an abelian extension of $E''$ by $E'$.
\item[\rm(2)]The abelian extension of pre-Lie-Rinehart algebras is called {\bf split} if there exists an $A$-linear map $\sigma:E''\rightarrow E$ such that $p\circ \sigma=\id$.
\item[\rm(3)] Two abelian extensions of $E''$ by $E'$,  $E_1$ and $E_2$,   are said to be {\bf equivalent} if there exists a pre-Lie-Rinehart algebra morphism $\tau:E_2\longrightarrow E_1$ such that we have the following commutative diagram:
\begin{equation}\label{diagram1}
\begin{array}{ccccccccc}
0&\longrightarrow& E'&\stackrel{\imath_2}\longrightarrow&E_2&\stackrel{p_2}\longrightarrow& E''&\longrightarrow&0\\
 &            &\Big\|&       &\tau\Big\downarrow&          &\Big\|& &\\
 0&\longrightarrow&E'&\stackrel{\imath_1}\longrightarrow&E_1&\stackrel{p_1}\longrightarrow&E''&\longrightarrow&0.
 \end{array}
 \end{equation}
\end{itemize}
\end{defi}

 Let $E$ be an abelian extension of $E''$ by $E'$, and  $\sigma:E''\rightarrow E$ a split. Define $\omega:E''\times E''\rightarrow E'$,
 $\rho:E''\longrightarrow\Hom_\K(E',E')$ and $\mu:E''\longrightarrow\Hom_\K(E',E')$ respectively by
\begin{eqnarray}
  \label{eq:str1}\omega(X,Y)&=&\sigma(X)\cdot_{E}\sigma(Y)-\sigma(X\cdot_{E''}Y),\quad \forall X,Y\in E'',\\
 \label{eq:str2} \rho(X)(u)&=&\sigma(X)\cdot_{E} u,\quad \forall X\in E,~u\in E',\\
 \label{eq:str31}\mu(X)(u)&=&u\cdot_{E}\sigma(X),\quad \forall X\in E,~u\in E'.
\end{eqnarray}
It is easy to check that $\omega\in\Hom(E''\otimes_A E'', E')$, $\rho$ and $\mu$ are $A$-linear maps, and $\mu(X)\in\Hom_A(E',E')$.

Given a split $\sigma$, we have $E\cong E''\oplus E'$ as $A$-module, and
the pre-Lie-Rinehart algebra structure on  $E$ can be transferred to  $E''\oplus E'$:
\begin{eqnarray}
(X+u)\ast_\sigma(Y+v)&=&X\cdot_{E''}Y +\omega(X,Y)+\rho(X)(v)+\mu(Y)(u)+u\cdot_{E'} v,\label{bracket of Ext}\\
\theta_\sigma(X+u)&=&\theta_{E''}(X).
\end{eqnarray}

\begin{pro}\label{nonabelian extension of LB}
With the above notations, $(E''\oplus E',\ast_\sigma,\theta_\sigma)$ is a pre-Lie-Rinehart algebra if and only if $\rho,\mu$ and $\omega$ satisfy the following equalities:
\begin{eqnarray}
  \label{eq:extension1}{[\rho(X),\rho(Y)]^c}-\rho([X,Y]_{E''})&=&L'_{\omega(X,Y)-\omega(Y,X)},\\
   \label{eq:extension2} \rho(X)\mu(Y)-\mu(Y)\rho(X)-\mu(X\cdot_{E''} Y)+\mu(Y)\mu(X)&=&R'_{\omega(X,Y)},\\
    \label{eq:extension3}(\rho-\mu)(X)(u)\cdot_{E'}v+u\cdot_{E'}\rho(X)(v)-\rho(X)(u\cdot_{E'}v)&=&0,\\
 \label{eq:extension4}u\cdot_{E'}\mu(X)(v)-v\cdot_{E'}\mu(X)(u)-  \mu(X)([u,v]_{E'})&=&0,
\end{eqnarray}
\begin{eqnarray}
 \label{eq:extension5}\omega(Y,X\cdot_{E''}Z)
-\omega(X,Y\cdot_{E''}Z)+\omega([X,Y]_{E''},Z)&=&\rho(X)\omega(Y,Z)- \rho(Y)\omega(X,Z)\\
\nonumber&-&\mu(Z)\omega(X,Y)+\mu(Z)\omega(Y,X).
\end{eqnarray}
\end{pro}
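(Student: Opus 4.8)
The plan is to verify the four defining axioms of a pre-Lie-Rinehart algebra directly for the transported data $(E''\oplus E',\ast_\sigma,\theta_\sigma)$ of \eqref{bracket of Ext}, and to show that the first three hold automatically while the pre-Lie identity alone encodes all five listed equalities. First I would dispose of the three axioms not involving the associator. The anchor-compatibility condition reduces at once to the corresponding identity in $E''$, since $\theta_\sigma(X+u)=\theta_{E''}(X)$ depends only on the $E''$-component and the $E''$-component of $(X+u)\ast_\sigma(Y+v)$ is $X\cdot_{E''}Y$. The two Leibniz-type identities follow term-by-term from the $A$-(bi)linearity of $\rho,\mu,\omega$ recorded after \eqref{eq:str31}, the $A$-bilinearity of $\cdot_{E'}$ (valid because $E'$ has trivial anchor and is thus a pre-Lie $A$-algebra), the pre-Lie-Rinehart identities in $E''$, and the fact, inherited from $E$, that each $\rho(X)$ is a derivation of the $A$-module $E'$ over $\theta_{E''}(X)$. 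None of these imposes a constraint, so the entire content of the proposition lies in the symmetry of the associator of $\ast_\sigma$.

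Next I would compute the associator $(P,Q,R):=P\ast_\sigma(Q\ast_\sigma R)-(P\ast_\sigma Q)\ast_\sigma R$ for $P=X+u$, $Q=Y+v$, $R=Z+w$, and split it into its $E''$- and $E'$-components. The $E''$-component is exactly the associator of $X,Y,Z$ in $E''$, which is already symmetric in $X,Y$ because $E''$ is pre-Lie, so it may be discarded. Everything therefore reduces to the symmetry of the $E'$-component under the exchange $(X,u)\leftrightarrow(Y,v)$. Since that $E'$-component is $\K$-multilinear and each argument splits into a pure $E''$-part and a pure $E'$-part, it suffices to test the symmetry on pure inputs, where each of $P,Q,R$ lies entirely in $E''$ or entirely in $E'$. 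This gives $2^3=8$ cases, which I would pair up under the swap $P\leftrightarrow Q$: the patterns $(E'',E'',E'')$, $(E'',E'',E')$ and $(E',E',E'')$ are self-paired; the pairs $(E'',E',E'')\leftrightarrow(E',E'',E'')$ and $(E'',E',E')\leftrightarrow(E',E'',E')$ each give one condition; and $(E',E',E')$ merely reproduces the pre-Lie identity of $E'$, which holds automatically.

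I expect each of the five remaining cases to be a short computation matching one listed equation. The all-$E''$ case, after collecting the $\omega$-terms and using $\omega(X\cdot_{E''}Y,Z)-\omega(Y\cdot_{E''}X,Z)=\omega([X,Y]_{E''},Z)$, gives \eqref{eq:extension5}. The case $(E'',E'',E')$ produces $[\rho(X),\rho(Y)]^c-\rho([X,Y]_{E''})$ against the left-multiplication $L'_{\omega(X,Y)-\omega(Y,X)}$, i.e. \eqref{eq:extension1}; the point easiest to miss is that the $E'$-component $\omega(X,Y)$ of $P\ast_\sigma Q$ feeds back through $\cdot_{E'}$ to contribute the term $\omega(X,Y)\cdot_{E'}w$ to $(P\ast_\sigma Q)\ast_\sigma R$. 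The mixed pair $(E'',E',E'')\leftrightarrow(E',E'',E'')$ gives \eqref{eq:extension2}, with the right-multiplication $R'_{\omega}$ arising from the analogous feedback term $v\cdot_{E'}\omega(X,Z)$; the pair $(E'',E',E')\leftrightarrow(E',E'',E')$ gives \eqref{eq:extension3}; and $(E',E',E'')$ gives \eqref{eq:extension4} after using $\mu(Z)(u\cdot_{E'}v)-\mu(Z)(v\cdot_{E'}u)=\mu(Z)([u,v]_{E'})$. Because the pre-Lie identity is multilinear, it holds for all inputs if and only if it holds in each pure case, which is precisely the conjunction of \eqref{eq:extension1} through \eqref{eq:extension5}; this yields both directions of the equivalence simultaneously.

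The main obstacle is purely organizational rather than conceptual: the $E'$-component of the fully general associator expands into roughly twenty terms, and the genuine risk is discarding one of the cross terms in which an $E'$-valued output of $\omega$, $\rho$ or $\mu$ is re-inserted into $\cdot_{E'}$ — exactly the terms that carry $L'$ and $R'$ in \eqref{eq:extension1} and \eqref{eq:extension2}. Working case-by-case on pure-type inputs, instead of expanding the general associator at once, is what keeps the bookkeeping honest and the proof manageable.
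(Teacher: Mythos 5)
Your plan is correct and is exactly what the paper does: the paper's proof is simply ``It follows by a direct calculation,'' and your direct verification --- anchor and Leibniz axioms automatic, associator symmetry split into pure-type cases matching \eqref{eq:extension1}--\eqref{eq:extension5} --- fills in that calculation accurately, including the feedback terms $\omega(X,Y)\cdot_{E'}w$ and $v\cdot_{E'}\omega(X,Z)$ that produce $L'$ and $R'$. No gaps.
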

\begin{proof}
It follows by a direct calculation.
\end{proof}

Now we assume that $E'$ is abelian. Then \eqref{eq:extension1} means that $(\rho,\mu)$ is a representation of $E''$ on $E'$ and \eqref{eq:extension5} means that $\omega$ is a $2$-cocycle of the pre-Lie-Rinehart algebra $E''$  associated to the representation $(E';\rho,\mu)$. Choosing the other split $\sigma':E''\rightarrow E$ of $A$-module, let $\omega'\in\Hom(E''\otimes_A E'', E')$ be the corresponding $2$-cocyle such that
$$\omega'(X,Y)=\sigma'(X)\cdot_{E}\sigma'(Y)-\sigma'(X\cdot_{E''}Y).$$

Set $\varphi=\sigma-\sigma'$. Since $p\circ \sigma=p\circ \sigma'=\id$, we have
\begin{eqnarray*}
  p\circ\varphi= p\circ\sigma-p\circ\sigma'=0,
\end{eqnarray*}
which implies that $\varphi\in \Hom_A(E'',E')$.

Since $E'$ is abelian, thus
\begin{eqnarray*}
\rho(X)(u)&=&\sigma(X)\cdot_{E} u=\sigma'(X)\cdot_{E} u,\\
\mu(X)(u)&=&u\cdot_{E}\sigma(X)=u\cdot_{E}\sigma'(X).
\end{eqnarray*}
This means that the representation $(E';\rho,\mu)$ of $E''$ does not depend on the choice of split.

Then we obtain that
\begin{eqnarray*}
 (\omega-\omega')(X,Y)=\rho(X)\varphi(Y)+\mu(Y)\varphi(X)-\varphi(X\cdot_{E''}Y)=\delta\varphi(X,Y).
\end{eqnarray*}

The classical argument in Eilenberg-Mac Lane cohomology, may easily be extended to a proof of the following:
\begin{thm}\label{thm:secind equivalent classes}
Let $E''$ and $E'$ be pre-Lie-Rinehart algebras. Assume that $E'$ is abelian and $(\rho,\mu)$ is a representation of $E''$ on $E'$. Then the abelian extension $E''$ by $E'$  determined by a $2$-cocyle $\omega\in\Hom(E''\otimes_A E'', E')$ builds a one-to-one corresponding between the equivalent classes of extensions of $E''$ by $E'$ and the cohomology classes in $H_{\PRin}^2(E'',E')$.
\end{thm}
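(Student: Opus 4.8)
The plan is to establish the standard bijection between equivalence classes of abelian extensions and the second cohomology group $H_{\PRin}^2(E'',E')$ by constructing explicit maps in both directions and checking they are mutually inverse. The representation $(\rho,\mu)$ of $E''$ on $E'$ is fixed throughout, and by the discussion preceding the theorem it is independent of the chosen split, so the whole correspondence takes place within the single cohomology complex $(\bigoplus_n C^{n+1}(E'',E'),\delta)$ attached to this representation.

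First I would define a map from equivalence classes of extensions to $H_{\PRin}^2(E'',E')$. Given an abelian extension $0\to E'\to E\to E''\to 0$, choose an $A$-linear split $\sigma:E''\to E$ and form $\omega$ via \eqref{eq:str1}. By Proposition \ref{nonabelian extension of LB} together with the assumption that $E'$ is abelian, equations \eqref{eq:extension1} and \eqref{eq:extension5} show that $(\rho,\mu)$ is a representation and that $\omega$ is a $2$-cocycle, i.e. $\delta\omega=0$, so $\omega$ determines a class $[\omega]\in H_{\PRin}^2(E'',E')$. I must check this class is independent of the choice of split: for a second split $\sigma'$ with cocycle $\omega'$, setting $\varphi=\sigma-\sigma'\in\Hom_A(E'',E')$ gives $\omega-\omega'=\delta\varphi$ by the computation already displayed before the theorem, so $[\omega]=[\omega']$. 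I would then verify that equivalent extensions yield the same class: if $\tau:E_2\to E_1$ is an equivalence as in \eqref{diagram1}, then composing a split $\sigma_2$ of $p_2$ with $\tau$ gives a split of $p_1$, and since $\tau$ restricts to the identity on $E'$ and covers the identity on $E''$, the two cocycles differ by a coboundary.

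Conversely, I would define a map from $H_{\PRin}^2(E'',E')$ to equivalence classes of extensions. Given a $2$-cocycle $\omega$, equip $E''\oplus E'$ with the operations $\ast_\sigma$ and $\theta_\sigma$ from \eqref{bracket of Ext}; since $E'$ is abelian, Proposition \ref{nonabelian extension of LB} guarantees that $(E''\oplus E',\ast_\sigma,\theta_\sigma)$ is a pre-Lie-Rinehart algebra precisely because \eqref{eq:extension1}--\eqref{eq:extension5} reduce to the representation axioms for $(\rho,\mu)$ and the cocycle condition $\delta\omega=0$. The obvious inclusion and projection make this an abelian extension of $E''$ by $E'$. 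I would check that cohomologous cocycles $\omega,\omega'=\omega+\delta\varphi$ give equivalent extensions by exhibiting the isomorphism $\tau(X+u)=X+u+\varphi(X)$ and verifying it is a pre-Lie-Rinehart algebra morphism making \eqref{diagram1} commute.

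Finally I would confirm the two constructions are mutually inverse: starting from a cocycle, building the extension $E''\oplus E'$, and reading off its cocycle with respect to the canonical split returns $\omega$ on the nose; starting from an extension, choosing a split to get $\omega$, and then reconstructing $E''\oplus E'$ produces an extension equivalent to the original via the $A$-module isomorphism induced by $\sigma$. The bulk of the work is routine translation between the cocycle identities and the axioms of Proposition \ref{nonabelian extension of LB}; the main obstacle, and the only place requiring genuine care, is the well-definedness on equivalence classes, namely checking that split-independence and the equivalence relation both correspond exactly to adding coboundaries $\delta\varphi$ with $\varphi\in\Hom_A(E'',E')$, and that the morphism $\tau$ respects the $A$-linearity and anchor conditions (which hold since $\theta_{E'}=0$ forces $\theta_\sigma$ to depend only on the $E''$-component).
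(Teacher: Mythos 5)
Your proposal is correct and follows exactly the route the paper intends: it is the classical Eilenberg--Mac Lane argument that the paper explicitly invokes without writing out, built on the same ingredients the paper establishes beforehand (Proposition \ref{nonabelian extension of LB} specialized to abelian $E'$, and the computation $\omega-\omega'=\delta\varphi$ for two splits). You have simply supplied the routine verifications the authors leave to the reader, and each of them checks out.
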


\section{Crossed modules for pre-Lie-Rinehart algebras}\label{sec:crossed module}
In this section, we firs construct a certain transformation pre-Lie-Rinehart algebra, called a free pre-Lie-Rinehart algebra, and show that for $n>1$, the $n$th-cohomology group for this free pre-Lie-Rinehart algebra is trivial.  Then we use the third cohomology group of a pre-Lie-Rinehart algebra to classify the crossed modules for pre-Lie-Rinehart algebras.

\subsection{Free pre-Lie-Rinehart algebras}
Let $(\g,\cdot_\g)$ be a pre-Lie $\K$-algebra and $\g^c$ its sub-adjacent Lie algebra. Let $U(\g^c)$ be the $\K$-enveloping algebra of $\g^c$. Then there is a left module over $U(\g^c)$ on $\g$ given by
$$(x_1\otimes\cdots\otimes x_n)\star y=(L_{x_1}\circ\cdots\circ L_{x_n}) y,\quad\forall~y\in \g, x_1\otimes\cdots\otimes x_n\in U(\g^c) .$$
The following lemma gives a description of free pre-Lie algebra generated by a vector space $V$ instead of rooted trees.
\begin{lem}{\rm(\cite{chapoton})}
  Let $\g$ be a free pre-Lie $\K$-algebra generated by a vector space $V$ and $\sigma:V\longrightarrow \g$ the canonical morphism. Then there is a pre-Lie $\K$-algebra structure on $U(\g^c)\otimes V$ given by
  $$(u,a)\ast (v,b)=((u\star \sigma(a))\otimes v,b),\quad\forall~u,v\in U(\g^c),a,b\in V.$$
 Furthermore, the map $\tilde{\sigma}:U(\g^c)\otimes V\longrightarrow \g$ given by
  $$\tilde{\sigma}(u,a)=u\star \sigma(a)$$
  is an isomorphism of pre-Lie algebras over left $U(\g^c)$-module between $\g$ and $U(\g^c)\otimes V$.
\end{lem}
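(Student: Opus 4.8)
The plan is to establish the two assertions of the lemma — that $\ast$ is a pre-Lie operation and that $\tilde{\sigma}$ is an isomorphism — together, by first showing that $\tilde{\sigma}$ is a bijection intertwining $\ast$ with $\cdot_\g$; the pre-Lie identity on $U(\g^c)\otimes V$ then descends by transport of structure from $\g$.

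First I would pin down the module structure. Since $L:\g\longrightarrow\gl(\g)$ is a representation of the sub-adjacent Lie algebra $\g^c$ (the standard pre-Lie fact recalled just before the lemma), it extends uniquely to a unital algebra homomorphism $U(\g^c)\longrightarrow\End_\K(\g)$, and this is exactly the action $\star$ in the statement. In particular $\star$ is associative as a module action, $(wu)\star y=w\star(u\star y)$, and on degree-one elements $x\star y=L_x y=x\cdot_\g y$. Associativity of $\star$ immediately gives that $\tilde{\sigma}$ is a morphism of left $U(\g^c)$-modules, since $\tilde{\sigma}\big(w\cdot(u\otimes a)\big)=(wu)\star\sigma(a)=w\star\big(u\star\sigma(a)\big)=w\star\tilde{\sigma}(u\otimes a)$.

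Next I would check that $\tilde{\sigma}$ intertwines the two products. Writing $X=u\star\sigma(a)\in\g$ and viewing it as a degree-one element of $U(\g^c)$, the interplay between $\cdot_\g$ and $\star$ yields
$$\tilde{\sigma}(u,a)\cdot_\g\tilde{\sigma}(v,b)=X\cdot_\g(v\star\sigma(b))=L_X(v\star\sigma(b))=X\star(v\star\sigma(b))=(Xv)\star\sigma(b),$$
where $Xv$ is the product in $U(\g^c)$; this is precisely $\tilde{\sigma}\big((u,a)\ast(v,b)\big)$. Hence $\tilde{\sigma}$ carries $\ast$ to $\cdot_\g$. Consequently, once $\tilde{\sigma}$ is known to be bijective, the associator of $\ast$ is sent by $\tilde{\sigma}$ to the associator of $\cdot_\g$, which is symmetric in its first two arguments; pulling this symmetry back through the linear bijection $\tilde{\sigma}^{-1}$ shows that $(U(\g^c)\otimes V,\ast)$ is a pre-Lie algebra and that $\tilde{\sigma}$ is a pre-Lie isomorphism, simultaneously an isomorphism of $U(\g^c)$-modules by the previous paragraph.

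It therefore remains to prove that $\tilde{\sigma}$ is bijective, which is the crux. Surjectivity is formal: $\img\tilde{\sigma}$ contains $\sigma(V)$ (take $u=1$), and by the intertwining formula it is closed under $\cdot_\g$, so it is a pre-Lie subalgebra containing the generators; as $\g$ is freely generated by $\sigma(V)$, it equals $\g$. Injectivity is the genuine obstacle, and it is exactly here that freeness must be used essentially. I would argue by a graded Poincaré–Birkhoff–Witt comparison: grade everything by the number of generators (equivalently, the number of vertices in the rooted-tree model of Chapoton–Livernet). The description of the free pre-Lie algebra as decorated rooted trees — a tree being a root carrying an element of $V$ together with a finite set of subtrees — gives a graded-vector-space isomorphism $\g\cong V\otimes S(\g)$, while PBW gives $U(\g^c)\cong S(\g)$ as graded vector spaces; hence $U(\g^c)\otimes V$ and $\g$ have equal dimension in each degree, and a graded surjection between such spaces is an isomorphism. (Equivalently, and more robustly for infinite-dimensional $V$, one may exhibit an explicit left inverse via the tree-pruning coproduct.) The real work thus lies in making this grading precise and invoking the rooted-tree basis, the algebraic identities above being routine.
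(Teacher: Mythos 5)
The paper offers no proof of this lemma: it is quoted from Chapoton--Livernet \cite{chapoton} as a known result, so there is no in-paper argument to compare yours against, and I can only assess your reconstruction on its own terms. Your overall strategy is sound and is essentially the standard one. The reduction is correct: since $L$ is a representation of $\g^c$ on $\g$, the action $\star$ is a genuine unital $U(\g^c)$-module structure; the computation $\tilde{\sigma}(u,a)\cdot_\g\tilde{\sigma}(v,b)=\big(u\star\sigma(a)\big)\star\big(v\star\sigma(b)\big)=\tilde{\sigma}\big((u,a)\ast(v,b)\big)$ is right; surjectivity of $\tilde{\sigma}$ follows as you say because its image contains $\sigma(V)$ and, by the intertwining identity, is closed under $\cdot_\g$; and transport of structure through the bijection then yields both the pre-Lie identity for $\ast$ and the isomorphism statement simultaneously.

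The one place where your argument is a sketch rather than a proof is injectivity, and you correctly identify it as the crux. Two things need to be filled in there. First, the graded identification $\g\cong V\otimes S(\g)$ is precisely the rooted-tree description of the free pre-Lie algebra, which is the main theorem of \cite{chapoton}; it is legitimate to invoke it, but it is the essential input and should be stated as such rather than folded into a parenthetical. Second, the comparison $U(\g^c)\cong S(\g)$ via Poincar\'e--Birkhoff--Witt must be taken in the grading by number of generators: one has to observe that the pre-Lie product, hence the commutator bracket, is homogeneous for this grading, so that $\g^c$ is a graded Lie algebra, $U(\g^c)$ inherits the grading, and PBW respects it; only then, and only when $V$ is finite-dimensional so that each graded piece has finite dimension, does "graded surjection between spaces of equal graded dimension" force injectivity. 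For general $V$ you must either pass to a filtered colimit over finite-dimensional subspaces (both sides commute with such colimits) or actually construct the left inverse by root-pruning, as you allude to; one of these has to be written down for the proof to be complete.
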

Let $\g$ be a pre-Lie $\K$-algebra and $M$ a $\g$-module. Then we have the cochain complex $(\bigoplus_{n\geq0}C_\K^{n+1}(\g,M),\delta)$ with
$$C^{n+1}_\K(\g,M)=\Hom_\K(\wedge^{n}\g\otimes \g,M),\
n\geq 0.$$
We denote the corresponding $n$th-cohomology group by $H_{\PLie}^n(\g,M)$ and the $n$th-cohomology group of the sub-adjacent Lie $\K$-algebra $\g^c$ by $H_{\Lie}^n(\g^c,M)$.
\begin{pro}{\rm(\cite{chapoton})}\label{pro:free pre-Lie 2}
  Let $\g$ be a free pre-Lie $\K$-algebra generated by a vector space $V$ and $M$ a pre-Lie $\K$-algebra $\g$-module. Then
 \begin{equation*}
 H^n_{\PLie}(\g,M)=\left\{\begin{array}{rcll}
&V & n=1\\
&0,& n\neq 1.
   \end{array}\right.
\end{equation*}
\end{pro}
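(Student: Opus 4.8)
The plan is to reduce the computation to the ordinary Lie cohomology of the sub-adjacent Lie algebra $\g^c$ and then to exploit the freeness of $\g$ over its enveloping algebra. First I would specialize Theorem \ref{thm:cohomology of pre-Lie and Lie} to the case $A=\K$ with trivial anchor, in which a pre-Lie $\K$-algebra is precisely a pre-Lie-Rinehart algebra. This yields a natural isomorphism $H^{n+1}_{\PLie}(\g,M)\cong H^n_{\Lie}(\g^c,C^1(\g,M))$ for all $n>0$, where $C^1(\g,M)=\Hom_\K(\g,M)$ carries the $\g^c$-module structure given by the action $\varrho$ introduced just before that theorem. Consequently, the vanishing of $H^n_{\PLie}(\g,M)$ in all degrees $n\geq 2$ is equivalent to the vanishing of $H^m_{\Lie}(\g^c,\Hom_\K(\g,M))$ for every $m\geq 1$.

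The heart of the argument, and the step I expect to be the main obstacle, is to show that the coefficient module $\Hom_\K(\g,M)$ is acyclic for $\g^c$. For this I would invoke the preceding lemma of Chapoton: the map $\tilde{\sigma}\colon U(\g^c)\otimes V\to\g$ is an isomorphism exhibiting $\g$ as a free left $U(\g^c)$-module on $V$. Dualizing gives $\Hom_\K(\g,M)\cong\Hom_\K(U(\g^c),\Hom_\K(V,M))$, which is the underlying space of a module coinduced from the zero subalgebra of $\g^c$; since coinduced modules are acyclic in positive degrees by Shapiro's lemma, this forces $H^m_{\Lie}(\g^c,\Hom_\K(\g,M))=0$ for all $m\geq 1$. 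The delicate point is that the relevant $\g^c$-action is not the bare coinduction action but $\varrho$, whose defining formula $\varrho(x)(\psi)(y)=\rho(x)\psi(y)+\mu(y)\psi(x)-\psi(x\cdot_\g y)$ mixes the module structure of $M$ with left multiplication in $\g$. I would therefore need to check carefully that $\varrho$ is carried to the right-translation (coinduced) action on $\Hom_\K(U(\g^c),\Hom_\K(V,M))$ under $\tilde{\sigma}$, presumably after an explicit change of variables that absorbs the $\rho$- and $\mu$-terms into the coinduced action.

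Combining these steps gives $H^n_{\PLie}(\g,M)=0$ for all $n\geq 2$. The lowest surviving degree is not reached by the isomorphism (which requires $n>0$) and must be treated by hand from the explicit differential: writing $\delta\varphi(X,Y)=\rho(X)\varphi(Y)+\mu(Y)\varphi(X)-\varphi(X\cdot_\g Y)$ for $\varphi\in C^1(\g,M)$, a direct analysis of the $1$-cocycles that uses the free generation of $\g$ by $V$ identifies $H^1_{\PLie}(\g,M)$ with $V$, while the remaining degree $n=0$ vanishes for trivial reasons. This reproduces Chapoton's computation and completes the proof.
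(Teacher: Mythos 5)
Your overall route is the one the paper (and Chapoton--Livernet, to whom the proposition is attributed) has in mind: everything rests on the lemma that $\tilde{\sigma}$ exhibits $\g$ as the free left $U(\g^c)$-module $U(\g^c)\otimes V$, combined with the reduction of pre-Lie cohomology to Lie cohomology of $\g^c$ with coefficients in $C^1(\g,M)$. The paper's own justification is a one-line computation identifying $H^n_{\PLie}(\g,M)$ with $\mathrm{Tor}^{U(\g^c)}_{n-1}(\g,M)$, which vanishes for $n\geq 2$ by freeness; your version is the $\Ext$/coinduced-module dual of the same argument.

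The step you flag as ``delicate'' is, however, exactly where all the content lies, and as written it is a genuine gap. The vector-space isomorphism $\Hom_\K(\g,M)\cong\Hom_\K(U(\g^c),\Hom_\K(V,M))$ does not by itself make the coefficient module coinduced: the action $\varrho(x)(\psi)(y)=\rho(x)\psi(y)+\mu(y)\psi(x)-\psi(x\cdot_\g y)$ is not the natural $\Hom$-module action on $\Hom_\K(\g_L,M)$ --- the term $\mu(y)\psi(x)$ cannot be rewritten as $\psi$ evaluated at a modified argument --- so ``dualizing'' the freeness isomorphism is not enough and Shapiro's lemma does not apply verbatim. The standard way to close this is to identify the entire pre-Lie cochain complex $\Hom_\K(\wedge^{n}\g\otimes\g,M)$ with $\Hom_{U(\g^c)}\bigl((U(\g^c)\otimes\wedge^{n}\g)\otimes\g,\,M\bigr)$, where the Chevalley--Eilenberg resolution of $\K$ is tensored with $\g_L$ via the diagonal action (using that $U(\g^c)$ is a Hopf algebra); the $\mu$-term in the differential is precisely what the diagonal action produces. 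This exhibits $H^{n}_{\PLie}(\g,M)\cong\Ext^{n-1}_{U(\g^c)}(\g,M)$, which vanishes for $n\geq2$ because $\g\cong U(\g^c)\otimes V$ is free. Until that identification is carried out, the vanishing is asserted rather than proved. A sanity check that confirms an adjustment is needed: your method would give $H^1_{\PLie}(\g,M)=\Hom_{U(\g^c)}(\g,M)\cong\Hom_\K(V,M)$ in degree one, not the bare $V$ appearing in the statement (the Tor formulation likewise outputs $V\otimes M$), so the ``$V$'' in the proposition should be read with that caveat rather than recovered literally by a direct analysis of $1$-cocycles.
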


\emptycomment{
\begin{proof}
It follows that
 \begin{eqnarray*}
H^n_{\PLie}(\g,M)&=&Tor^{n-1}_{U(\g^c)}(U(\g^c)\otimes V,M)\\
&=&\left\{\begin{array}{rcll}
&V,& n=1\\
&0,& n\neq 1.
   \end{array}\right.
\end{eqnarray*}
\end{proof}}

Given a vector space $V$ over $\K$ and $\varphi:V \longrightarrow \Der(A)$ a $\K$-linear map. Let $\g$ be the free pre-Lie $\K$-algebra generated by the vector space $V$. Then we have the unique action of $U(\g^c)\otimes V$ on $A$ $\Phi:U(\g^c)\otimes V\longrightarrow \Der(A)$ which extends the map $\varphi$ by
$$\Phi(u,x)=\varphi\circ\tilde{\sigma}(u,x),\quad\forall~(u,x)\in U(\g^c)\otimes V.$$
Applying the construction in Example \ref{ex:transformation pre-Lie}, we get a transformation pre-Lie-Rinehart algebra on $A\otimes (U(\g^c)\otimes V)$. We call this pre-Lie-Rinehart algebra {\bf the free  pre-Lie-Rinehart algebra generated by $\varphi$}. In particular, let $(E,\cdot_E,\theta_E)$ be a pre-Lie-Rinehart algebra, then there is a free pre-Lie-Rinehart algebra generated by the pre-Lie-Rinehart algebra $E$ through the map $\theta_E:E\longrightarrow \Der(A)$ and denote by $\frkE$. Furthermore, $\pi:\frkE\longrightarrow E$ given by
$$\pi(a\otimes (u,X))=a(u\star \sigma(X)),\quad\forall~a\in A,(u,X)\in U(E^c)\otimes E$$
is a surjective morphism of pre-Lie-Rinehart algebras.

By a similar proof of Lemma 2.9 in \cite{CaLaPi1}, we have
\begin{lem}\label{lem:free pre-Lie}
  Let $(\g,\cdot_\g)$ be a pre-Lie $\K$-algebra and $\rho:\g\longrightarrow\Der(A)$ an action of $\g$ on $A$. Let $E=A\otimes \g$ be the transformation pre-Lie-Rinehart algebra given by Example \ref{ex:transformation pre-Lie}. Then for any pre-Lie-Rinehart $E$-module $\huaE$, we have the canonical isomorphism of cochain complexes $C^{n}(E,\huaE)\cong C^{n}_\K(\g,\huaE)$ for $n\geq1$ and thus
  $$H_{\PRin}^n(E,\huaE)\cong H_{\PLie}^n(\g,\huaE),\quad n\geq1.$$
\end{lem}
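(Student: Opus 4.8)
The plan is to construct an explicit isomorphism of cochain complexes
$$\Theta^n : C^{n}(E,\huaE) \lon C^{n}_\K(\g,\huaE), \quad n\geq 1,$$
and verify that it commutes with the respective coboundary operators. The key point, which I would exploit throughout, is that $E=A\otimes\g$ is a \emph{free} $A$-module on the $\K$-basis of $\g$: any element of $E$ is an $A$-linear combination of elements $1\otimes x$ with $x\in\g$, and the pre-Lie operation $\cdot_E$ together with the bracket $[-,-]_E$ of the sub-adjacent Lie-Rinehart algebra $E^c$ are controlled on these generators by the formulas of Example \ref{ex:transformation pre-Lie}. Since an $A$-multilinear cochain $\varphi\in C^{n}(E,\huaE)=\Hom_A(\wedge^{n-1}E\otimes E,\huaE)$ is determined by its values on generators $1\otimes x_i$, restriction along the map $\g\longrightarrow E,\ x\mapsto 1\otimes x$, gives a natural candidate
$$\Theta^n(\varphi)(x_1,\ldots,x_n)=\varphi(1\otimes x_1,\ldots,1\otimes x_n),$$
a $\K$-multilinear cochain in $C^{n}_\K(\g,\huaE)$.

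\textbf{Bijectivity.} First I would check that $\Theta^n$ is well defined and a $\K$-linear bijection. Injectivity is immediate from $A$-multilinearity, since the generators span $E$ over $A$. For surjectivity, given a $\K$-multilinear $\psi$ on $\g$ one reconstructs an $A$-multilinear cochain on $E$ by the rule
$$\varphi(a_1\otimes x_1,\ldots,a_n\otimes x_n)=a_1\cdots a_n\,\psi(x_1,\ldots,x_n);$$
the only thing to verify is that this is genuinely $A$-\emph{multilinear} (not merely well defined on pure tensors), and that it recovers $\psi$ under $\Theta^n$. Here one uses that $\huaE$ is an $A$-module and that the target values may legitimately absorb the scalars $a_i$, so the construction is forced and the verification is routine.

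\textbf{Compatibility with coboundaries.} The substantive step is the identity $\Theta^{n+1}\circ\delta_{\PRin}=\delta_{\PLie}\circ\Theta^n$, i.e. that restricting to generators intertwines the two coboundary operators. I would substitute the generators $1\otimes x_i$ into the explicit formula for $\delta\varphi$ recorded before Theorem \ref{thm:cohomology of pre-Lie and Lie}, and compare term by term with the corresponding pre-Lie cochain formula $\delta$ on $\g$. The $\rho$- and $\mu$-terms match directly because the representation of $E$ on $\huaE$ restricts to the representation of $\g$ on $\huaE$ on generators. The terms involving $X_i\cdot_E X_{n+1}$ and $[X_i,X_j]_E$ require care: evaluating $(1\otimes x_i)\cdot_E(1\otimes x_{n+1})=1\otimes(x_i\cdot_\g x_{n+1})$ and $[1\otimes x_i,1\otimes x_j]_E=1\otimes[x_i,x_j]^c$ by the transformation formulas (the anchor terms vanish since the coefficients are the unit $1\in A$), so these reduce exactly to the pre-Lie expressions $x_i\cdot_\g x_{n+1}$ and $[x_i,x_j]^c$ inside $\g$.

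\textbf{Conclusion and main obstacle.} Once the two squares commute, $\Theta^\bullet$ is an isomorphism of cochain complexes and hence induces isomorphisms on cohomology, giving $H_{\PRin}^n(E,\huaE)\cong H_{\PLie}^n(\g,\huaE)$ for $n\geq 1$. I expect the main obstacle to be the bookkeeping in the compatibility step rather than any conceptual difficulty: one must track the signs and the shifted argument placements (the ``$X_i$ moved to the last slot'' pattern in the $\mu$- and the $\cdot_E$-terms) and confirm that the antisymmetrization over the first $n$ slots is preserved under restriction. Because the paper states the result as following ``by a similar proof of Lemma 2.9 in \cite{CaLaPi1},'' I would lean on that reference for the organizational skeleton and concentrate my own verification on the two places where the pre-Lie structure (the operator $\mu$ and the non-skew term $X_i\cdot_E X_{n+1}$) differs from the purely Lie-theoretic situation treated there.
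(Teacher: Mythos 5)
Your proposal is correct and is essentially the proof the paper has in mind: the paper simply cites the analogous Lemma 2.9 of \cite{CaLaPi1} for Lie--Rinehart algebras, and your restriction map $\varphi\mapsto\varphi(1\otimes x_1,\ldots,1\otimes x_n)$, together with the observation that $(1\otimes x)\cdot_E(1\otimes y)=1\otimes(x\cdot_\g y)$ because derivations kill the unit of $A$, is exactly the adaptation of that argument to the pre-Lie setting. The extra care you flag for the $\mu$-term and the non-skew slot is the only genuinely new bookkeeping, and your treatment of it is sound.
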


By Lemma \ref{lem:free pre-Lie} and Proposition \ref{pro:free pre-Lie 2}, we have
\begin{pro}
  Let $\frkV$ be the  free  pre-Lie-Rinehart algebra generated by $\varphi:V \longrightarrow \Der(A)$ and $\huaE$ any pre-Lie-Rinehart module over $\frkV$. Then
  $$H^n_{\PRin}(\frkV,\huaE)=0,\quad n>1.$$
\end{pro}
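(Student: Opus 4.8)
The plan is to reduce the statement to the two results just recalled, namely Lemma~\ref{lem:free pre-Lie} and Proposition~\ref{pro:free pre-Lie 2}, by recognizing $\frkV$ as a transformation pre-Lie-Rinehart algebra whose underlying pre-Lie algebra is free. First I would unwind the definition: by construction $\frkV$ is the transformation pre-Lie-Rinehart algebra $A\otimes(U(\g^c)\otimes V)$ attached to the action $\Phi\colon U(\g^c)\otimes V\longrightarrow\Der(A)$, where $\g$ is the free pre-Lie $\K$-algebra generated by $V$ and $U(\g^c)\otimes V$ is equipped with the pre-Lie product of Chapoton's lemma. That lemma furnishes the isomorphism of pre-Lie algebras $\tilde{\sigma}\colon U(\g^c)\otimes V\to\g$, so the pre-Lie $\K$-algebra $\g':=U(\g^c)\otimes V$ underlying $\frkV$ is itself free on $V$.

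Next I would apply Lemma~\ref{lem:free pre-Lie} to the transformation pre-Lie-Rinehart algebra $\frkV=A\otimes\g'$ together with the $\frkV$-module $\huaE$. This yields a canonical isomorphism of cochain complexes $C^{n}(\frkV,\huaE)\cong C^{n}_\K(\g',\huaE)$ for all $n\geq 1$, and hence an isomorphism $H^n_{\PRin}(\frkV,\huaE)\cong H^n_{\PLie}(\g',\huaE)$ for $n\geq 1$. This step transports the computation from the pre-Lie-Rinehart side to the purely pre-Lie side, where the freeness of $\g'$ can be exploited.

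Finally, since $\g'\cong\g$ is free pre-Lie on $V$, Proposition~\ref{pro:free pre-Lie 2} gives $H^n_{\PLie}(\g',\huaE)=0$ for every $n\neq 1$. Chaining this with the isomorphism of the previous paragraph yields $H^n_{\PRin}(\frkV,\huaE)=0$ for all $n>1$, which is exactly the assertion.

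The work here is really just bookkeeping rather than a genuine obstacle; the only point requiring care is the first step. One must verify that $\huaE$, viewed through $\tilde{\sigma}$, is indeed a pre-Lie $\K$-algebra $\g'$-module in the sense demanded by Lemma~\ref{lem:free pre-Lie}, and that the freeness of $\g'$ on $V$ is transported intact across Chapoton's isomorphism so that Proposition~\ref{pro:free pre-Lie 2} applies verbatim. Both follow at once from the definitions, after which the proof is a formal composition of the two isomorphisms.
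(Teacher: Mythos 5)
Your proof is correct and follows exactly the paper's route: the paper derives this proposition directly by combining Lemma~5.4 (identifying $H^n_{\PRin}$ of a transformation pre-Lie-Rinehart algebra with $H^n_{\PLie}$ of the underlying pre-Lie algebra) with Proposition~5.2 (vanishing of the cohomology of a free pre-Lie algebra in degrees $\neq 1$). You have merely spelled out the bookkeeping that the paper leaves implicit.
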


\begin{cor}
 Let $(E,\cdot_E,\theta_E)$ be a pre-Lie-Rinehart algebra and $\frkE$ be the free pre-Lie-Rinehart algebra generated by $\theta_E$ and let $\huaE$ be any pre-Lie-Rinehart module over $\frkE$. Then
  $$H^n_{\PRin}(\frkE,\huaE)=0,\quad n>1.$$
\end{cor}

\subsection{Crossed modules for pre-Lie-Rinehart algebras}
\begin{defi}\label{defi:cmodPLRA}
  A {\bf crossed module for pre-Lie-Rinehart algebras} over $A$ consists of a pre-Lie-Rinehart algebra $(E,\cdot_{E},\theta_{E})$ and a pre-Lie $\K$-algebra
  $(\huaE,\cdot_{\huaE})$ together with a representation $(\rho,\mu)$ of $E$ on $\huaE$ and $A$-linear pre-Lie algebra homomorphism $\partial:\huaE\longrightarrow E$ such that the following identities hold:
  \begin{itemize}
\item[\rm(1)]$\partial(\rho(X)u)=X\cdot_{E} \partial(u),\quad \partial(\mu(X)u)=\partial(u)\cdot_{E}X;$
\item[\rm(2)]$\rho(\partial(u))v=\mu(\partial(v))u=u\cdot_{\huaE} v$;
\item[\rm(3)]$\theta_E\circ\partial=0$
\end{itemize}
for all $u,v\in\huaE,X\in E,a\in A$. We denote a crossed module for pre-Lie-Rinehart algebras by $(E,\huaE,\partial,(\rho,\mu))$.
\end{defi}

We call an $A$-submodule $F$ of $(E,\cdot_E,\theta_E)$ a {\bf pre-Lie-Rinehart subalgebra} if $F$ is a pre-Lie $\K$-subalgebra of $(E,\cdot_E)$ and $\theta_F(X)=\theta_E(X)$ for $X\in F$. We call a pre-Lie-Rinehart subalgebra $F$ of $E$ an {\bf ideal} if $F$ is an ideal of $E$ as pre-Lie $\K$-algebra and $\theta_F=0$.

\begin{ex}
  Let $(E,\cdot_E,\theta_E)$ a pre-Lie-Rinehart algebra and $F$ an ideal of $E$. Then $(E,F,\partial=\imath,(\rho,\mu))$ is a crossed module for pre-Lie-Rinehart algebras, where $\imath:F\longrightarrow E$ is the inclusion, and $(\rho,\mu)$ is given by $\rho(X)(Y)=X\cdot_E Y,~\mu(X)(Y)=Y\cdot_E X$ for $X\in E$ and $Y\in F$.
\end{ex}

\begin{ex}
  For any pre-Lie-Rinehart algebra homomorphism $f:E\longrightarrow E'$, $\Ker~ f$ is an ideal of $E$. Then $(E,\Ker ~f,\partial=\imath,(\rho,\mu))$ is a crossed module for pre-Lie-Rinehart algebras, where $\imath:\Ker ~f\longrightarrow E$ is the inclusion, and $(\rho,\mu)$ is given by $\rho(X)(Y)=X\cdot_E Y,~\mu(X)(Y)=Y\cdot_E X$ for $X\in E$ and $Y\in \Ker f$.
\end{ex}

\begin{ex}
  Let $(\rho,\mu)$ be a representation of pre-Lie-Rinehart algebra $E$ on $\huaE$, then $(E,\huaE,\partial=0,(\rho,\mu))$ is a crossed module for pre-Lie-Rinehart algebras.
\end{ex}

\begin{pro}
Let $(E,\huaE,\partial,(\rho,\mu))$ be a crossed module for pre-Lie-Rinehart algebras over $A$. Then we have
\begin{eqnarray*}
  \rho(X)(u\cdot_\huaE v)&=& \rho(X)(u)\cdot_\huaE v+u\cdot_\huaE \rho(X)(v)-(\mu(X)u)\cdot_\huaE v,\\
  \mu(X)(u\cdot_\huaE v)&=& \mu(X)(v\cdot_\huaE u)+u\cdot_\huaE \mu(X)(v)-v\cdot_\huaE(\mu(X)u),\quad\forall~X\in E,u,v\in\huaE.
\end{eqnarray*}
Consequently, there is a pre-Lie-Rinehart algebra structure on $E\oplus\huaE$ given by
\begin{eqnarray*}
  (X+u)\ast(Y+v)&=&X\cdot_E Y+\rho(X)v+\mu(Y)u+u\cdot_\huaE v,\\
  \theta(X+u)&=&\theta_E(X).
\end{eqnarray*}
\end{pro}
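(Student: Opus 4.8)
The plan is to deduce both displayed identities from the crossed-module axioms (1)--(3) together with the representation conditions, and then to recognise the operation on $E\oplus\huaE$ as the special case $\omega=0$, $E''=E$, $E'=\huaE$ of the extension product \eqref{bracket of Ext}. Under this identification the pre-Lie-Rinehart axioms for $(E\oplus\huaE,\ast,\theta)$ are exactly conditions \eqref{eq:extension1}--\eqref{eq:extension5} of Proposition \ref{nonabelian extension of LB}: equations \eqref{eq:extension1} and \eqref{eq:extension2} hold because $(\rho,\mu)$ is a representation of $E$ on $\huaE$, equation \eqref{eq:extension5} is vacuous once $\omega=0$, and the remaining two are precisely the identities to be proved --- the first is literally \eqref{eq:extension3}, while \eqref{eq:extension4} becomes the second identity after expanding $[u,v]_{E'}=u\cdot_\huaE v-v\cdot_\huaE u$. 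Thus the whole statement reduces to proving the two identities, together with the remark that $\cdot_\huaE$ is $A$-bilinear, so that $\huaE$ is genuinely a pre-Lie $A$-algebra with trivial anchor as required to apply Proposition \ref{nonabelian extension of LB}.

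For the first identity I would start from $u\cdot_\huaE v=\rho(\partial u)v$, which is condition (2), so that $\rho(X)(u\cdot_\huaE v)=\rho(X)\rho(\partial u)v$. Since $\rho$ is a representation of the sub-adjacent Lie-Rinehart algebra $E^c$, it is a Lie homomorphism into $\dev(\huaE)$, whence $\rho(X)\rho(\partial u)=\rho(\partial u)\rho(X)+\rho([X,\partial u]_E)$. Condition (1) evaluates the bracket, $X\cdot_E\partial u=\partial(\rho(X)u)$ and $\partial u\cdot_E X=\partial(\mu(X)u)$, so that $[X,\partial u]_E=\partial\big(\rho(X)u-\mu(X)u\big)$. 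Substituting back and applying condition (2) once more in the form $\rho(\partial w)z=w\cdot_\huaE z$ turns the three summands into $u\cdot_\huaE\rho(X)v+\rho(X)u\cdot_\huaE v-(\mu(X)u)\cdot_\huaE v$, which is the first claimed formula.

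For the second identity I would instead use $u\cdot_\huaE v=\mu(\partial v)u$ from condition (2), so that $\mu(X)(u\cdot_\huaE v)=\mu(X)\mu(\partial v)u$. Substituting the pair $(\partial v,X)$ into the representation compatibility \eqref{representation condition 2} and solving for the composite $\mu(X)\mu(\partial v)$ gives $\mu(X)\mu(\partial v)=\mu(\partial v\cdot_E X)-\rho(\partial v)\mu(X)+\mu(X)\rho(\partial v)$. Using $\partial v\cdot_E X=\partial(\mu(X)v)$ from condition (1) and then condition (2) in both guises $\rho(\partial w)z=w\cdot_\huaE z$ and $\mu(\partial w)z=z\cdot_\huaE w$, the three terms applied to $u$ collapse to $u\cdot_\huaE\mu(X)v-v\cdot_\huaE(\mu(X)u)+\mu(X)(v\cdot_\huaE u)$, which is the second formula after rearranging. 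The $A$-bilinearity of $\cdot_\huaE$ follows in the same spirit: $(au)\cdot_\huaE v=\rho(\partial(au))v=a(u\cdot_\huaE v)$ by $A$-linearity of $\rho$ and $\partial$, while $u\cdot_\huaE(av)=\rho(\partial u)(av)=a(u\cdot_\huaE v)$ because the symbol of the derivation $\rho(\partial u)$ is $\theta_E(\partial u)=0$ by condition (3).

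All the computations are short; the only real care is in tracking the order of $\rho$ and $\mu$ when invoking \eqref{representation condition 2} and in matching signs so that the two identities coincide exactly with \eqref{eq:extension3} and \eqref{eq:extension4}. Once this matching is secured, Proposition \ref{nonabelian extension of LB} yields the pre-Lie-Rinehart algebra $(E\oplus\huaE,\ast,\theta)$ directly, the anchor $\theta(X+u)=\theta_E(X)$ being well defined because the $\huaE$-component carries the trivial anchor.
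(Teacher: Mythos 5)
Your proof is correct. The computation of the first identity is exactly the paper's: write $u\cdot_\huaE v=\rho(\partial u)v$, use that $\rho$ is a representation of $E^c$ to replace $\rho(X)\rho(\partial u)$ by $\rho(\partial u)\rho(X)+\rho([X,\partial u]_E)$, evaluate the bracket via axiom (1), and return to products via axiom (2). For the second identity the paper merely says ``by a similar calculation''; your derivation from \eqref{representation condition 2} applied to the pair $(\partial v, X)$ is the correct way to carry that out, and the three resulting terms do collapse as you claim. The only point where you genuinely diverge is the final structural assertion: the paper verifies directly that $\ast$ is a pre-Lie product and checks the two $A$-module compatibilities by hand, whereas you reduce everything to Proposition \ref{nonabelian extension of LB} with $\omega=0$, observing that \eqref{eq:extension1} and \eqref{eq:extension2} are the representation conditions, \eqref{eq:extension5} is vacuous, and \eqref{eq:extension3}, \eqref{eq:extension4} are precisely the two identities just proved (the latter after expanding $[u,v]_{E'}$). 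This is a clean and legitimate shortcut that also explains why the two identities are bundled with the semidirect-product conclusion; its one prerequisite, that $\cdot_\huaE$ is $A$-bilinear so that $\huaE$ qualifies as an abelian pre-Lie-Rinehart algebra with zero anchor, you check correctly using the $A$-linearity of $\partial$ and $\rho$ together with $\theta_E\circ\partial=0$ --- a detail the paper uses implicitly without comment.
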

\begin{proof}
By the definition of crossed module for pre-Lie-Rinehart algebras, for $X\in E,u,v\in\huaE$, we have
\begin{eqnarray*}
  \rho(X)(u\cdot_\huaE v)&=& \rho(X)(\rho(\partial(u))v)=[\rho(X),\rho(\partial(u))]^cv+\rho(\partial(u))\rho(X)v\\
  &=&\rho([X,\partial(u)]_E)v+\rho(\partial(u))\rho(X)v\\
  &=&\rho(X\cdot_E\partial(u))v-\rho(\partial(u)\cdot_EX)v+\rho(\partial(u))\rho(X)v\\
  &=&\rho(\partial(\rho(X)u))v-\rho(\partial(u)\cdot_EX)v+\rho(\partial(u))\rho(X)v\\
  &=&\rho(X)(u)\cdot_\huaE v+u\cdot_\huaE \rho(X)(v)-(\mu(X)u)\cdot_\huaE v.
\end{eqnarray*}
By a similar calculation, we have
$$\mu(X)(u\cdot_\huaE v)= \mu(X)(v\cdot_\huaE u)+u\cdot_\huaE \mu(X)(v)-v\cdot_\huaE(\mu(X)u).$$
It is straightforward to check that $(E\oplus\huaE,\ast)$ is a pre-Lie algebra. Furthermore, we have
\begin{eqnarray*}
  (X+u)\ast(a(Y+v))&=&X\cdot_E (aY)+\rho(X)(av)+\mu(aY)u+u\cdot_\huaE (av)\\
  &=&a(X\cdot_E Y)+\theta_E(X)(a)Y+a\rho(X)(v)+\theta_E(X)(a)v+a\mu(Y)u+a(u\cdot_\huaE v)\\
  &=&a(X\cdot_E Y+\rho(X)(v)+\mu(Y)u+u\cdot_\huaE v)+\theta_E(X)(a)Y+\theta_E(X)(a)v\\
  &=&a\big( (X+u)\ast(Y+v))+\theta(X+u)(a)(Y+v),\\
  (a(X+u))\ast(Y+v)&=&a(X\cdot_E Y)+\rho(aX)v+\mu(Y)(au)+a(u\cdot_\huaE v)\\
  &=&a(X\cdot_E Y)+a(\rho(X)v)+a(\mu(Y)(u))+a(u\cdot_\huaE v)\\
  &=&a\big((X+u)\ast(Y+v)\big).
\end{eqnarray*}
Thus, $(E\oplus\huaE,\ast,\theta)$ is a pre-Lie-Rinehart algebra over $A$.
\end{proof}

In the following, we recall the definition of crossed module for Lie-Rinehart algebras.
\begin{defi}{\rm(\cite{CaLaPi1})}
  A {\bf crossed module for Lie-Rinehart algebras} over $A$ consists of a Lie-Rinehart algebra $(E,\cdot_{E},[-,-]_{E})$ and a Lie $\K$-algebra
  $(\huaE,[-,-]_{\huaE})$ together with a representation $\rho$ of $E$ on $\huaE$ and $A$-linear Lie algebra homomorphism $\partial:\huaE\longrightarrow E$ such that the following identities hold:
  \begin{itemize}
\item[\rm(1)]$\partial(\rho(X)u)=[X,\partial(u)]_{E};$
\item[\rm(2)]$\rho(\partial(u))v=[u, v]_{\huaE}$;
\item[\rm(3)]$\theta_E\circ\partial=0$;
\item[\rm(4)]$\rho(X)[u,v]_\huaE=[\rho(X)(u),v]_\huaE+[u,\rho(X)(v)]_\huaE$
\end{itemize}
for all $u,v\in\huaE,X\in E,a\in A$. We denote a crossed module for Lie-Rinehart algebras by $(E,\huaE,\partial,\rho)$.
\end{defi}

\begin{pro}
 Let $(E,\huaE,\partial,(\rho,\mu))$ be a crossed module for pre-Lie-Rinehart algebras over $A$. Then $(E^c,\huaE^c,\partial,\rho-\mu)$ is a crossed module for Lie-Rinehart algebras, where $E^c$ and $\huaE^c$  are the corresponding sub-adjacent Lie-Rinehart algebras of $E$ and $\huaE$, respectively.
\end{pro}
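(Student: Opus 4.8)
The plan is to verify directly the four defining conditions of a crossed module for Lie-Rinehart algebras for the data $(E^c,\huaE^c,\partial,\rho-\mu)$, reducing each to the axioms of the pre-Lie-Rinehart crossed module $(E,\huaE,\partial,(\rho,\mu))$ together with the two identities established in the preceding Proposition. First I would record the structural facts that cost nothing: $E^c$ is a Lie-Rinehart algebra and $\huaE^c$ a Lie $\K$-algebra by passing to commutators, and since $\partial$ is an $A$-linear pre-Lie algebra homomorphism it is automatically an $A$-linear homomorphism of the sub-adjacent Lie algebras, because
\[\partial([u,v]_\huaE)=\partial(u\cdot_\huaE v-v\cdot_\huaE u)=\partial(u)\cdot_E\partial(v)-\partial(v)\cdot_E\partial(u)=[\partial(u),\partial(v)]_E.\]

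Next I would check that $\rho-\mu$ is a representation of the Lie-Rinehart algebra $E^c$ on the $A$-module $\huaE$. Since $\rho(X)\in\dev(\huaE)$ with symbol $\theta_E(X)$ and $\mu(X)\in\Hom_A(\huaE,\huaE)$, the operator $(\rho-\mu)(X)$ again lies in $\dev(\huaE)$ with the same symbol $\theta_E(X)$, and $X\mapsto(\rho-\mu)(X)$ is $A$-linear. It then remains to verify $(\rho-\mu)([X,Y]_E)=[(\rho-\mu)(X),(\rho-\mu)(Y)]^c$; expanding both sides and substituting the representation relation $\rho(X)\mu(Y)-\mu(Y)\rho(X)=\mu(X\cdot_E Y)-\mu(Y)\mu(X)$ for the terms $\mu(X\cdot_E Y)$ and $\mu(Y\cdot_E X)$, all mixed $\rho\mu$- and $\mu\mu$-products cancel in pairs. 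This is precisely the statement, recorded earlier, that $(\huaE;\rho-\mu)$ is a representation of $E^c$.

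With the representation in hand, conditions (1)--(3) are short. For (1), using the two halves of axiom (1) of the pre-Lie crossed module,
\[\partial((\rho-\mu)(X)u)=\partial(\rho(X)u)-\partial(\mu(X)u)=X\cdot_E\partial(u)-\partial(u)\cdot_E X=[X,\partial(u)]_E.\]
For (2), axiom (2) of the pre-Lie crossed module gives $\rho(\partial(u))v=u\cdot_\huaE v$ and, after relabelling $u\leftrightarrow v$ in $\mu(\partial(v))u=u\cdot_\huaE v$, also $\mu(\partial(u))v=v\cdot_\huaE u$, so that $(\rho-\mu)(\partial(u))v=u\cdot_\huaE v-v\cdot_\huaE u=[u,v]_\huaE$. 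Condition (3) is identical to axiom (3), namely $\theta_E\circ\partial=0$.

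The one genuine computation, and the main obstacle, is condition (4): that $(\rho-\mu)(X)$ acts as a derivation of the Lie bracket $[-,-]_\huaE$. The plan is to write $[u,v]_\huaE=u\cdot_\huaE v-v\cdot_\huaE u$ and apply the two identities
\[\rho(X)(u\cdot_\huaE v)=\rho(X)(u)\cdot_\huaE v+u\cdot_\huaE\rho(X)(v)-(\mu(X)u)\cdot_\huaE v\]
and
\[\mu(X)(u\cdot_\huaE v)=\mu(X)(v\cdot_\huaE u)+u\cdot_\huaE\mu(X)(v)-v\cdot_\huaE(\mu(X)u)\]
from the preceding Proposition to each of the products appearing in $(\rho-\mu)(X)(u\cdot_\huaE v-v\cdot_\huaE u)$. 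The $\mu$-identity is the delicate one, as it mixes $u\cdot_\huaE v$ with $v\cdot_\huaE u$; applying it once converts $-\mu(X)(u\cdot_\huaE v)+\mu(X)(v\cdot_\huaE u)$ into $-u\cdot_\huaE(\mu(X)v)+v\cdot_\huaE(\mu(X)u)$. Expanding the expected right-hand side $[(\rho-\mu)(X)u,v]_\huaE+[u,(\rho-\mu)(X)v]_\huaE$ into its eight signed $\cdot_\huaE$-monomials and matching term by term then yields the desired equality. I expect the bookkeeping of these eight monomials to be the only nontrivial point; everything else is formal and $A$-linearity is inherited from the pre-Lie data.
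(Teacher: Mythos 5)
Your proposal is correct and is exactly the ``direct calculation'' that the paper leaves to the reader: one verifies that $\rho-\mu$ is a representation of $E^c$ on $\huaE$ (using $\rho(X)\mu(Y)-\mu(Y)\rho(X)=\mu(X\cdot_E Y)-\mu(Y)\mu(X)$ to cancel the mixed terms) and then checks axioms (1)--(4) of a Lie-Rinehart crossed module from the pre-Lie axioms and the two derivation-type identities of the preceding proposition. All eight monomials in your verification of condition (4) do match, so there is no gap.
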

\begin{proof}
It follows by a direct calculation.
\end{proof}

Let $(E,\huaE,\partial,(\rho,\mu))$ be a crossed module for pre-Lie-Rinehart algebras over $A$. Then by (1) in definition of crossed module for pre-Lie-Rinehart algebras, $\img~\partial$ is a pre-Lie $\K$-ideal of $E$ and an $A$-submodule, therefore,  by (3) in definition of crossed module for pre-Lie-Rinehart algebras, $\coker~\partial$ has a natural structure of pre-Lie-Rinehart algebra. Furthermore, by (2) and $\partial$ is $A$-linear in definition of crossed module for pre-Lie-Rinehart algebras, $\Ker~\partial$ is an abelian $A$-ideal of $\huaE$ and the representation of $E$ on $\huaE$ gives rise to a pre-Lie-Rinehart module of $\coker\partial$ on $\Ker\partial$.

Let $F$ be a pre-Lie-Rinehart algebra and let $\huaF$ be a pre-Lie-Rinehart module over $F$. A {\bf crossed extension of $F$ by $\huaF$} is an exact sequences of pre-Lie-Rinehart algebras
$$ 0\longrightarrow \huaF\stackrel{\imath}{\longrightarrow}\huaE\stackrel{\partial}{\longrightarrow}E\stackrel{p}\longrightarrow F\longrightarrow0,$$
where $\partial:\huaE\longrightarrow E$ is a crossed module of pre-Lie-Rinehart algebras over $A$ and the canonical maps $\coker~\partial\longrightarrow F$ and $\huaF\longrightarrow \Ker~\partial$ are isomorphism of pre-Lie-Rinehart algebras and modules respectively.

Two crossed extensions of $F$ by $\huaF$ with respect to the crossed modules $\partial:\huaE\longrightarrow E$ and $\partial':\huaE'\longrightarrow E'$, respectively, are said to be {\bf equivalent} if there exist pre-Lie-Rinehart algebras homomorphisms $\phi:\huaE\longrightarrow \huaE'$ and  $\psi:E\longrightarrow E'$ such that
\begin{equation}\label{eq:isomorphism conds}
  \phi(\rho_E(X)u)=\rho_{E'}(\psi(X))\phi(u),\quad  \phi(\mu_E(X)u)=\mu_{E'}(\psi(X))\phi(u),\quad\forall~X\in E,u\in \huaE
\end{equation}
and the following diagram is communicative
\begin{equation}\label{diagram2}
\begin{array}{ccccccccccc}
0&\longrightarrow& \huaF&\stackrel{\imath}\longrightarrow&\huaE&\stackrel{\partial}\longrightarrow&E&\stackrel{p}\longrightarrow& F&\longrightarrow&0\\
 &            &\Big\|&       &\phi\Big\downarrow&      &\psi\Big\downarrow&      &\Big\|& &\\
 0&\longrightarrow&\huaF&\stackrel{\imath'}\longrightarrow&\huaE'&\stackrel{\partial'}\longrightarrow&E'&\stackrel{p'}\longrightarrow&F&\longrightarrow&0.
 \end{array}\end{equation}
Let $\crmod(F,\huaF)$ denote the set of equivalence classes of crossed extensions for the pre-Lie-Rinehart algebra $F$ by $\huaF$.

\begin{thm}\label{crossed module of pLA}
 Let $F$ be a pre-Lie-Rinehart algebra and $\huaF$  a pre-Lie-Rinehart module over $F$. Then there exists a natural bijection
  $$\mu:\crmod(F,\huaF)\longrightarrow H_{\PRin}^3(F,\huaF).$$
\end{thm}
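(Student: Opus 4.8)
The plan is to construct the bijection $\mu$ explicitly in both directions, following the classical Eilenberg--Mac Lane style argument already invoked for the abelian extension case in Theorem \ref{thm:secind equivalent classes}, but now one degree higher and using the vanishing of higher cohomology for free pre-Lie-Rinehart algebras as the key technical input. First I would fix a crossed extension
$$0\longrightarrow \huaF\stackrel{\imath}{\longrightarrow}\huaE\stackrel{\partial}{\longrightarrow}E\stackrel{p}\longrightarrow F\longrightarrow0$$
and produce from it a $3$-cocycle in $C^3(F,\huaF)=\Hom_A(\wedge^2 F\otimes F,\huaF)$. To do this, choose an $A$-linear section $s\colon F\longrightarrow E$ of $p$ (so $p\circ s=\id$), which exists since everything is $A$-linear and we are working with the obstruction-theoretic setup. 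The failure of $s$ to be a pre-Lie-Rinehart algebra morphism is measured by $\omega(X,Y)=s(X)\cdot_E s(Y)-s(X\cdot_F Y)\in E$, which lands in $\img\,\partial=\Ker\,p$. Then choose an $A$-linear lift $t\colon \img\,\partial\longrightarrow \huaE$ of $\partial$ and set, for $X,Y,Z\in F$,
$$\Theta(X,Y,Z)=\rho(s(X))\,t(\omega(Y,Z))-t\big(\text{associator-type correction terms in }\omega\big),$$
where the bracketed expression is assembled so that $\Theta$ measures the failure of the lifted $\omega$ to satisfy the $2$-cocycle identity \eqref{eq:extension5} at the level of $\huaE$ rather than $E$. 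A direct (if lengthy) verification using conditions (1)--(3) of Definition \ref{defi:cmodPLRA} shows $\Theta$ is $A$-linear in each slot, takes values in $\Ker\,\partial\cong\huaF$, and satisfies $\delta\Theta=0$.

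Next I would check that the cohomology class $[\Theta]\in H_{\PRin}^3(F,\huaF)$ is independent of the choices of $s$ and $t$: changing $s$ by an $A$-linear map $F\to \img\,\partial$ and $t$ by an $A$-linear map $\img\,\partial\to\Ker\,\partial$ alters $\Theta$ by a coboundary $\delta\varphi$ for a suitable $\varphi\in C^2(F,\huaF)$, exactly as in the second-cohomology computation preceding Theorem \ref{thm:secind equivalent classes}. Equivalent crossed extensions, via the maps $\phi,\psi$ in diagram \eqref{diagram2} subject to \eqref{eq:isomorphism conds}, give cohomologous cocycles, so $\mu([\text{extension}])=[\Theta]$ is well defined on $\crmod(F,\huaF)$.

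For surjectivity and injectivity I would use the free pre-Lie-Rinehart algebra $\frkF$ generated by $\theta_F\colon F\to\Der(A)$, together with the surjection $\pi\colon\frkF\longrightarrow F$ from Section \ref{sec:crossed module}, and the vanishing
$$H_{\PRin}^n(\frkF,\huaE)=0,\quad n>1$$
from the corollary following Lemma \ref{lem:free pre-Lie}. Given a class in $H_{\PRin}^3(F,\huaF)$, pull it back along $\pi$; since the higher cohomology of $\frkF$ vanishes, the pulled-back $3$-cocycle is a coboundary $\delta\beta$, and the relation $R=\Ker\,\pi$ together with $\beta$ assembles into a crossed extension realizing the original class, giving surjectivity. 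Injectivity follows by the standard comparison argument: two crossed extensions with cohomologous cocycles can be connected through the free resolution, using $A$-linearity of all lifts and the vanishing of $H^2$ and $H^3$ of $\frkF$ to successively adjust the comparison maps $\phi,\psi$ until diagram \eqref{diagram2} commutes.

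\textbf{The main obstacle} I anticipate is the cocycle verification $\delta\Theta=0$ and the precise bookkeeping of the correction terms defining $\Theta$: because the pre-Lie-Rinehart coboundary $\delta$ mixes the $\rho$-action, the $\mu$-action, and the pre-Lie multiplication in an asymmetric way (reflecting that $C^{n+1}=\Hom_A(\wedge^n E\otimes E,\huaE)$ is only partially antisymmetric), one must choose the symmetrization in the last slot of $\Theta$ carefully so that the non-antisymmetric $\mu$-terms in \eqref{eq:extension2} and \eqref{eq:extension5} cancel correctly against the associator of $\cdot_\huaE$. The cleanest route is to reduce this to the Lie-Rinehart case via the sub-adjacent functor $E\mapsto E^c$ and the known result of \cite{CaLaPi1}, transporting the crossed module structure through the isomorphism $H_{\PRin}^{n+1}(F,\huaF)\cong H_{\Rin}^n(F^c,C^1(F,\huaF))$ of Theorem \ref{thm:cohomology of pre-Lie and Lie}, which converts the delicate pre-Lie cocycle identity into the already-established Lie-Rinehart one.
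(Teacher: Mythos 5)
Your proposal follows essentially the same route as the paper: construct the $3$-cocycle from a crossed extension via $A$-linear sections of $p$ and $\partial$ (the paper's $f$, built from $g(X,Y)=\sigma\big(s(X)\cdot_E s(Y)-s(X\cdot_F Y)\big)$, is exactly your $\Theta$), verify independence of the choices and of the equivalence class, and invert the map using the free pre-Lie-Rinehart algebra $\frkF$ generated by $\theta_F$ together with the vanishing of $H^n_{\PRin}(\frkF,-)$ for $n>1$. One caution about your closing remark: the proposed shortcut of transporting the cocycle identity through the isomorphism $H^{n+1}_{\PRin}(F,\huaF)\cong H^n_{\Rin}(F^c,C^{1}(F,\huaF))$ does not directly apply, since the Lie-Rinehart classification of \cite{CaLaPi1} uses coefficients $\huaF$ rather than $C^{1}(F,\huaF)$ and a pre-Lie-Rinehart crossed module carries strictly more data than its sub-adjacent Lie-Rinehart one, so the direct verification you outline first is the correct (and the paper's) approach.
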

\begin{proof}
We define $\mu:\crmod(F,\huaF)\longrightarrow H_{\PRin}^3(F,\huaF)$ as follows. Consider the following crossed extension of $F$ by $\huaF$
$$ 0\longrightarrow \huaF\stackrel{\imath}{\longrightarrow}\huaE\stackrel{\partial}{\longrightarrow}E\stackrel{p}\longrightarrow F\longrightarrow0.$$
Set $N=\img\partial=\Ker~p$. By our assumption, we can choose $A$-linear sections $s:F\longrightarrow E$ and $\sigma:N\longrightarrow \huaE$. Thus $p\circ s=\id_F$ and $\partial\circ\sigma=\id_N$. One defines $g:F\otimes F\longrightarrow \huaE$ by
$$g(X,Y)=\sigma\big(s(X)\cdot_Es(Y)-s(X\cdot_F Y)\big),\quad\forall~X,Y\in F.$$
It is easy to see that $g(X,aY)=ag(X,Y)$. By a direct calculation, we have
$$g(aX,Y)-ag(X,Y)=\big(\theta_E\circ s-\theta_F\big)(X)(a)s(Y).$$
Sine $p$ is a pre-Lie-Rinehart algebra morphism, one has $\theta_E(Z)=\theta_F\circ p(Z)$ for $Z\in E$. Thus $\theta_E\circ s(X)=\theta_F(X)$ and then $g(aX,Y)=ag(X,Y)$. Therefor, $g$ is an $A$-linear map.

Next one defines $f:F\otimes F\otimes F\longrightarrow \huaE$ as following:
\begin{eqnarray*}
f(X,Y,Z)&=&\rho_E(s(X))g(Y,Z)- \rho_E(s(Y))g(X,Z)-\mu_E(s(Z))g(X,Y)+\mu_E(s(Z))g(Y,X)\\
&&-g([X,Y]_F,Z)+g(X,Y\cdot_{F}Z)-g(Y,X\cdot_{F}Z),\quad\forall~X,Y,Z\in F.
\end{eqnarray*}
Using properties of crossed module, it is straightforward to check that
$$f(X,Y,Z) =-f(Y,X,Z),\quad \partial(f(X,Y,Z))=0,$$
and $f(aX,Y,Z)=f(X,aY,Z)=f(X,Y,aZ)=af(X,Y,Z).$ Thus $f\in\Hom_A(\wedge^2 F\otimes F,\huaF)$.

Note that the representation $(\rho_F,\mu_F)$ of $F$ on $\huaF$ satisfies
$$\rho_F(X)(u)=\rho_E(s(X))(u),\quad\mu_F(X)(u)=\mu_E(s(X))(u),\quad\forall~X\in F,u\in\huaF. $$
It is a routine to check that $\delta f(X,Y,Z,W)=0 $ for $X,Y,Z,W\in F$ associated to the representation $(\huaF;\rho_F,\mu_F)$. Thus $f$ is a $3$-cocycle.

Second, we prove that the cohomology class of $f$ does not depend on the $A$-sections $s$.
Let us choose the other $A$-linear section $\tilde{s}:F\longrightarrow E$ and $\tilde{f}$ is the corresponding $3$-cocycle using $\tilde{s}$ instead of $s$, in which $\tilde{g}(X,Y)=\sigma\big(\tilde{s}(X)\cdot_E\tilde{s}(Y)-\tilde{s}(X\cdot_F Y)\big)$. Since $s$ and $\tilde{s}$ are $A$-sections of $p$, there exists an $A$-linear map $\varphi:F\longrightarrow \huaE$ such that
$$\partial\circ \varphi=\tilde{s}-s.$$
By direct calculation, we have
\begin{eqnarray*}
  (\tilde{f}-f)(X,Y,Z)&=&\rho_E(s(X))(\tilde{g}-g)(Y,Z)- \rho_E(s(Y))(\tilde{g}-g)(X,Z)-\mu_E(s(Z))(\tilde{g}-g)(X,Y)\\
  &&+\mu_E(s(Z))(\tilde{g}-g)(Y,X)-(\tilde{g}-g)([X,Y]_F,Z)+(\tilde{g}-g)(X,Y\cdot_{F}Z)\\
&&-(\tilde{g}-g)(Y,X\cdot_{F}Z)+\varphi(X)\cdot_\huaE \tilde{g}(Y,Z)-\varphi(Y)\cdot_\huaE \tilde{g}(X,Z)\\
&&-\tilde{g}(X,Y)\cdot_\huaE\varphi(Z)+\tilde{g}(Y,X)\cdot_\huaE\varphi(Z).
\end{eqnarray*}
Define $\omega:F\otimes F\longrightarrow \huaE$ by
$$\omega(X,Y)=\rho_E(\tilde{s}(X))\varphi(Y)+\mu_E(\tilde{s}(Y))\varphi(X)-\varphi(X\cdot_F Y)+\varphi(X)\cdot_\huaE\varphi(Y). $$
Using properties of crossed module, we have
$$\partial\circ \omega=\partial\circ(\tilde{g}-g).$$
Thus $\tilde{g}-g-\omega\in\Hom_A( F\otimes F,\huaF)$. Using properties of crossed module and the definition of pre-Lie-Rinehart algebra and its representation, we have
$$\delta(\tilde{g}-g-\omega)(X,Y,Z)= (\tilde{f}-f)(X,Y,Z).$$
Thus the cohomology class of $f$ does not depend on the $A$-sections $s$.

Let $\phi:\huaE\longrightarrow \huaE'$ and  $\psi:E\longrightarrow E'$ be the equivalent    homomorphisms  from the  crossed extension $\partial:\huaE\longrightarrow E$ to the crossed extension $\partial':\huaE'\longrightarrow E'$. Assume that  $s:F\longrightarrow E$ and $\sigma:N\longrightarrow \huaE$ are $A$-linear sections of $p$ and $\partial$, respectively and  $s':F'\longrightarrow E'$ and $\sigma':N'\longrightarrow \huaE'$ are $A$-linear sections of $p'$ and $\partial'$, respectively. Let $f$ and $f'$ denote the corresponding $3$-cocyles. Since $p'\circ \psi\circ s=\id_F$, $\psi\circ s$ is the other $A$-linear section of $p'$ and thus we can replace $s'$ by $\psi\circ s$ to define $f'$. Define $\varpi:F\otimes F\longrightarrow \huaF$ by
$$\varpi(X,Y)=(\phi\circ \sigma-\sigma'\circ \psi)(s(X)\cdot_Es(Y)-s(X\cdot_F Y)),\quad\forall~X,Y\in F.$$
By \eqref{eq:isomorphism conds}, we have
\begin{eqnarray*}
  f(X,Y,Z)&=&\phi(f(X,Y,Z))\\
  &=&\phi(\rho_E(s(X))g(Y,Z))- \phi(\rho_E(s(Y))g(X,Z))-\phi(\mu_E(s(Z))g(X,Y))\\
&&+\phi(\mu_E(s(Z))g(Y,X))-\phi(g([X,Y]_F,Z))+\phi(g(X,Y\cdot_{F}Z))-\phi( g(Y,X\cdot_{F}Z))\\
&=&\rho_{E'}(\psi (s(X)))\phi(g(Y,Z))-\rho_{E'}(\psi( s(Y)))\phi( g(X,Z))-\mu_{E'}(\psi(s(Z)))\phi(g(X,Y))\\
&&+\mu_{E'}(\psi(s(Z)))\phi(g(Y,X))-\phi(g([X,Y]_F,Z))+\phi( g(X,Y\cdot_{F}Z))-\phi(g(Y,X\cdot_{F}Z)).
\end{eqnarray*}
By the fact that $\phi$ and $\psi$ are the pre-Lie-Rinehart algebra homomorphisms, it is not hard to see that
$$(f-f')(X,Y,Z)=\delta\varpi(X,Y,Z).$$
This proves that the cohomology class of $f$ does not depend on the equivalence classes of crossed modules for pre-Lie-Rinehart algebras $\partial:\huaE\longrightarrow E$  and also the class of $f$ does not depend on the $A$-section $\sigma$.

Conversely, define $\nu:H_{\PRin}^3(F,\huaF)\longrightarrow\crmod(F,\huaF)$ as follows. Let $\frkF$ be the free pre-Lie-Rinehart algebra generated by $F$, then we have the following exact sequence
$$ 0\longrightarrow K\stackrel{\imath}{\longrightarrow} \frkF\stackrel{p}\longrightarrow F\longrightarrow0.$$
Furthermore, given a representation $(\rho_F,\mu_F)$ of pre-Lie-Rinehart algebra $F$ on $\huaF$, then there is a natural representation $(\rho_\frkF,\mu_\frkF)$ of $\frkF$ on $F$ and then a trivial representation $(\rho_K=0,\mu_K=0)$ of $K$ on $\huaF$ as follows:
$$\rho_\frkF(X)u=\rho_F(p(X))u,\quad \mu_\frkF(X)u=\mu_F(p(X))u,\quad X\in \frkF,u\in F.$$
The surjective map $p:\frkF\longrightarrow F$ induces a map of cochain complexes:
\begin{equation}\label{diagram1}
\begin{array}{ccccccccc}
\cdots&\stackrel{\delta}\longrightarrow&C^{2}(F,\huaF)&\stackrel{\delta}\longrightarrow& C^{3}(F,\huaF)&\stackrel{\delta}\longrightarrow&\cdots\\
&       &p^*\Big\downarrow&          &p^*\Big\downarrow& &\\
\cdots&\stackrel{\delta}\longrightarrow&C^{2}(E,\huaF)&\stackrel{\delta}\longrightarrow&C^{3}(E,\huaF)&\stackrel{\delta}\longrightarrow&\cdots.
 \end{array}\end{equation}
 By the fact that $H_{\PRin}^3(\frkF,\huaF)=0$, for any $3$-cocycle $\kappa\in C^{3}(F,\huaF)$, there exists a $2$-cochain $\omega\in C^{2}(\frkF,\huaF)$ such that $\delta\omega=p^*\kappa$. Let $\huaE=K\oplus \huaF$ as $A$-module. Define
\begin{eqnarray}
 ( X_1+u_1)\cdot_\huaE (X_2+u_2)&=&X_1\cdot_E X_2+\omega(X_1,X_2)\\
\tilde{ \rho}_\frkF(Y)(X+u)&=&Y\cdot_E X+\rho_E(Y)u+\omega(Y,X),\\
  \tilde{\mu}_\frkF(Y)(X+u)&=&X\cdot_E Y+\mu_E(Y)u+\omega(X,Y),\\
  \partial(X+u)&=&X,
\end{eqnarray}
for all $X,X_1,X_2\in K,Y\in E,u,u_1,u_2\in\huaF.$ It is not hard to check that $\partial:\huaE\longrightarrow \frkF$ with the representation $( \tilde{\rho}_\frkF, \tilde{\mu}_\frkF)$ of $\frkF$ on $\huaE$ given above is a crossed module.

In the following, we have to check that the $\partial:\huaE\longrightarrow \frkF$ constructed above does not depend on the chosen $2$-cochain $\omega$ and the representative of the cohomology class of $\kappa$.

Let $\omega'\in C^{2}(\frkF,\huaF)$ be the other $2$-cochain such that $\delta\omega'=p^*\kappa$ and denoted by $\partial':\huaE'\longrightarrow \frkF$ with the representation $( \tilde{\rho}_{\frkF}, \tilde{\mu}_{\frkF})$ of $\frkF$ on $\huaE'$ the corresponding crossed module. Then $\omega-\omega'$ is a $2$-cocycle of $C^{2}(\frkF,\huaF)$.  By the fact that $H_{\PRin}^2(\frkF,\huaF)=0$, then there exists a $1$-cochain $\varphi\in C^{1}(\frkF,\huaF) $ such that $\delta \varphi=\omega-\omega'$. It is easy to check that for $X\in K,u\in \huaF,Y\in \frkF$, $\phi(X+u)=X+\varphi(X)+u$ and $\psi(Y)=Y$ give a homomorphism from the crossed module $\partial:\huaE\longrightarrow \frkF$ to $\partial':\huaE'\longrightarrow \frkF$. Thus these two crossed modules are in the same equivalence class.

Let $\kappa'\in C^{3}(F,\huaF)$ be the other $3$-cocycle cohomologous to $\kappa$, then there exists a  $2$-cocycle $\varpi\in C^{2}(F,\huaF)$ such that $\delta \varpi=\kappa'-\kappa$. For any $\omega\in C^{2}(\frkF,\huaF)$ with $\delta\omega=p^*\kappa$, we have $\delta(\omega+p^*\varpi)=p^* \kappa'$. Thus we can choose $\omega'=\omega+p^*\varpi$ to define a crossed module $\partial':\huaE'\longrightarrow \frkF$. It is easy to check that for $X\in K,u\in \huaF,Y\in E$, $\phi(X+u)=X+u$ and $\psi(Y)=Y$ give a homomorphism from crossed module $\partial:\huaE\longrightarrow \frkF$ to $\partial':\huaE'\longrightarrow \frkF$. Thus these two crossed modules are in the same equivalence class.

It is straightforward to check that the maps $\mu$ and $\nu$ are inverse to each other.
\end{proof}

\section{Crossed modules for (pre-)Lie-Rinehart algebras and (pre-)Lie-Rinehart $2$-algebras}\label{sec:Lie-Rinehart $2$-algebras}
In this section, we first give the notions of (pre-)Lie-Rinehart $2$-algebras. Next, we establishe a
one-to-one correspondence between strict (pre-)Lie-Rinehart $2$-algebras and crossed modules for (pre-)Lie-Rinehart algebras.
\begin{defi}\label{defi:2Lie}{\rm(\cite{BC})}
  A   {\bf Lie $\K$-$2$-algebra} $\huaV$ over $\K$ consists of the following data:
\begin{itemize}
\item[$\bullet$] a complex of vector spaces over $\K:V_{1}\stackrel{l_1}{\longrightarrow}V_0,$

\item[$\bullet$] a skew-symmetric bilinear map $l_2:V_{i}\otimes V_{j}\longrightarrow
V_{i+j}$, where  $0\leq i+j\leq1$,

\item[$\bullet$] a  skew-symmetric trilinear map $l_3:\wedge^3V_0\longrightarrow
V_{1}$,
   \end{itemize}
   such that for all $x_i,x,y,z\in V_0$ and $u,v\in V_{1}$, the following equalities are satisfied:
\begin{itemize}
\item[$\rm(a)$] $l_1 l_2(x,u)=l_2(x,l_1(u)),\quad l_2(l_1( u),v)=l_2( u,l_1(v)),$
\item[$\rm(b)$]$l_1 l_3(x,y,z)=l_2(x,l_2(y,z))+l_2(z,l_2(x,y))+l_2(y,l_2(z,x)),$
\item[$\rm(c)$]$ l_3(x,y,l_1(u))=l_2(x,l_2(y,u))+l_2(u,l_2(x,y))+l_2(y,l_2(u,x)),$
\item[$\rm(d)$] the Jacobiator identity:
\begin{eqnarray*}
&&\sum_{i=1}^4(-1)^{i+1}l_2(x_i,l_3(x_1,\cdots,\hat{x_i},\cdots,x_4))\\
&&+\sum_{i<j}l_3(l_2(x_i,l_j),x_1,\cdots,\hat{x_i},\cdots,\hat{x_j},\cdots,x_4)=0.\end{eqnarray*}
   \end{itemize}
\end{defi}
We usually denote a Lie $\K$-$2$-algebra by
$(V_{1},V_0,l_1,l_2,l_3)$, or simply by
$\huaV$.

\begin{defi}\label{defi:LR 2-A}
A {\bf Lie-Rinehart $2$-algebra} over $A$ is a pair that consists of a Lie $2$-$\K$-algebra $(L_0,L_1,l_1,l_2,l_3)$ together with an $A$-module structure on $\huaL=L_0\oplus L_1$ and an $A$-module morphism $\theta:L_0\longrightarrow\Der(A)$, called the anchor, such that for all $a\in A$ and $X^0,Y^0\in L_0$ and $Y\in\huaL$, the following conditions are satisfied:
\begin{itemize}
\item[\rm(i)]$l_2(X^0,aY)=al_2(X^0,Y)+\theta(X^0)(a)Y;$
\item[\rm(ii)]for $i\neq2$, $l_i$ are $A$-linear;
\item[\rm(iii)]$\theta(l_2(X^0,Y^0))=[\theta(X^0),\theta(Y^0)]^c$;
\item[\rm(iv)]$\theta\circ l_1=0$.
\end{itemize}
We denote a Lie-Rinehart $2$-algebra by $(L_0,L_1,l_1,l_2,l_3,\theta)$ if $A$ is fixed, or simply by $\huaL$. A Lie-Rinehart $\K$-$2$-algebra $(L_0,L_1,l_1,l_2,l_3,\theta)$ is said to be \bf{skeletal} {\bf (strict)} if $l_1=0$ ($l_3=0$).
\end{defi}

In the following, we first recall the definition of pre-Lie $\K$-$2$-algebra.
\begin{defi}\label{defi:2pre-Lie}{\rm(\cite{Sheng19})}
  A   {\bf pre-Lie $\K$-$2$-algebra} $\huaV$ over $\K$ consists of the following data:
\begin{itemize}
\item[$\bullet$] a complex of vector spaces over $\K:V_{1}\stackrel{m_1}{\longrightarrow}V_0,$

\item[$\bullet$] bilinear maps $l_2:V_{i}\otimes V_{j}\longrightarrow
V_{i+j}$, where  $0\leq i+j\leq1$,

\item[$\bullet$] a  trilinear map $l_3:\wedge^2V_0\otimes V_0\longrightarrow
V_{1}$,
   \end{itemize}
   such that for all $w,x,y,z\in V_0$ and $m,n\in V_{1}$, the following equalities are satisfied:
\begin{itemize}
\item[$\rm(a)$] $m_1 m_2(x,m)=m_2(x,m_1(m)),$
\item[$\rm(b)$]$m_1 m_2(m,x)=m_2(m_1( m),x),$
\item[$\rm(c)$]$m_2(m_1(m),n)=m_2(m,m_1(n)),$
\item[$\rm(e_1)$]$m_1 m_3(x,y,z)=m_2(x,m_2(y,z))-m_2(m_2(x,y),z)-m_2(y,m_2(x,z))+m_2(m_2(y,x),z),$
\item[$\rm(e_2)$]$ m_3(x,y,m_1(m))=m_2(x,m_2(y,m))-m_2(m_2(x,y),m)-m_2(y,m_2(x,m))+m_2(m_2(y,x),m),$
\item[$\rm(e_3)$]$ m_3(m_1(m),x,y)=m_2(m,m_2(x,y))-m_2(m_2(m,x),y)-m_2(x,m_2(m,y))+m_2(m_2(x,m),y),$
\item[$\rm(f)$] \begin{eqnarray*}
&&m_2(w,m_3(x,y,z))-m_2(x,m_3(w,y,z))+m_2(y,m_3(w,x,z))+m_2(m_3(x,y,w),z)\\
&&-m_2(m_3(w,y,x),z)+m_2(m_3(w,x,y),z)-m_3(x,y,m_2(w,z))+m_3(w,y,m_2(x,z))\\
&&-m_3(w,x,m_2(y,z))-m_3(m_2(w,x)-m_2(x,w),y,z)+m_3(m_2(w,y)-m_2(y,w),x,z)\\
&&-m_3(m_2(x,y)-m_2(y,x),w,z)=0.\end{eqnarray*}
   \end{itemize}
\end{defi}
We usually denote a pre-Lie $\K$-$2$-algebra by
$(V_{1},V_0,m_1,m_2,m_3)$, or simply by
$\huaV$.

\begin{defi}\label{defi:PLR 2-A}
A {\bf pre-Lie-Rinehart $2$-algebra} over $A$ is a pair that consists of a pre-Lie $2$-$\K$-algebra $(P_0,P_1,m_1,m_2,m_3)$ together with an $A$-module structure on $\huaP=P_0\oplus P_1$ and an $A$-module morphism $\theta:P_0\longrightarrow\Der(A)$, called the anchor, such that for all $a\in A$ and $X^0,Y^0\in P_0$ and $Y\in\huaP$, the following conditions are satisfied:
\begin{itemize}
\item[\rm(i)]$m_2(X^0,aY)=am_2(X^0,Y)+\theta(X^0)(a)Y,\quad m_2(aX^0,Y)=am_2(X^0,Y)$;
\item[\rm(ii)]$m_2(X^1,aY)=m_2(aX^1,Y)=am_2(X^1,Y)$;
\item[\rm(iii)]for $i\neq2$, $m_i$ are $A$-linear;
\item[\rm(iv)]$\theta(m_2(X^0,Y^0)-m_2(Y^0,X^0))=[\theta(X^0),\theta(Y^0)]^c$;
\item[\rm(v)]$\theta\circ m_1=0$.
\end{itemize}
We denote a pre-Lie-Rinehart $2$-algebra by $(P_0,P_1,m_1,m_2,m_3,\theta)$ if $A$ is fixed, or simply by $\huaP$. A pre-Lie-Rinehart $\K$-$2$-algebra $(P_0,P_1,m_1,m_2,m_3,\theta)$ is said to be \bf{skeletal} {\bf (strict)} if $m_1=0$ ($m_3=0$).
\end{defi}

Given a pre-Lie-Rinehart $2$-algebra $(P_0,P_1,m_1,m_2,m_3,\theta)$, we define $l_2:P_i\wedge P_j\longrightarrow P_{i+j}$ and $l_3:\wedge^3 P_0\longrightarrow P_1$ by
\begin{eqnarray}
  \label{eq:LR2Al21}l_2(X^0,Y^0)&=&m_2(X^0,Y^0)-m_2(Y^0,X^0),\\
 \label{eq:LR2Al22} l_2(X^0,Y^1)&=&-l_2(Y^1,X^0)=m_2(X^0,Y^1)-m_2(Y^1,X^0),\\
  \label{eq:LR2Al3}l_3(X^0,Y^0,Z^0)&=&m_3(X^0,Y^0,Z^0)+m_3(Z^0,X^0,Y^0)+m_3(Y^0,Z^0,X^0),
\end{eqnarray}
where $X^0,Y^0,Z^0\in P_0$ and $Y^1\in P_1$.
\begin{pro}
  Let $\huaP=(P_0,P_1,m_1,m_2,m_3,\theta)$ be a pre-Lie-Rinehart $2$-algebra. Then $(P_0,P_1,l_1=m_1,l_2,l_3,\theta)$ is a Lie-Rinehart $2$-algebra, which we denote by $\huaP^c$, where $l_2$ and $l_3$ are given by \eqref{eq:LR2Al21}-\eqref{eq:LR2Al3} respectively.
\end{pro}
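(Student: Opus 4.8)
The plan is to verify the axioms of Definition \ref{defi:LR 2-A} for the data $(P_0,P_1,l_1=m_1,l_2,l_3,\theta)$, splitting the work into the purely $\K$-algebraic part (showing the underlying graded object is a Lie $\K$-$2$-algebra in the sense of Definition \ref{defi:2Lie}, i.e. conditions (a)--(d)) and the Rinehart part (the anchor and $A$-module compatibilities (i)--(iv)). I would dispatch the second part first, since it is essentially the $2$-algebra version of the passage from a pre-Lie-Rinehart algebra to its sub-adjacent Lie-Rinehart algebra carried out earlier in the excerpt.

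For the Rinehart conditions I would substitute the defining formulas \eqref{eq:LR2Al21}--\eqref{eq:LR2Al3}. Condition (iv), $\theta\circ l_1=0$, is immediate from $l_1=m_1$ and (v) of Definition \ref{defi:PLR 2-A}. Condition (iii) is exactly (iv) of Definition \ref{defi:PLR 2-A}, since $l_2(X^0,Y^0)=m_2(X^0,Y^0)-m_2(Y^0,X^0)$. For the $A$-linearity required in (ii), note that $l_1=m_1$ and that $l_3$ is a sum of three copies of $m_3$, all $A$-linear by (iii) of Definition \ref{defi:PLR 2-A}. For the Leibniz rule (i), I would expand $l_2(X^0,aY)=m_2(X^0,aY)-m_2(aY,X^0)$ and apply (i)--(ii) of Definition \ref{defi:PLR 2-A} termwise: the first summand yields $am_2(X^0,Y)+\theta(X^0)(a)Y$ and the second yields $am_2(Y,X^0)$, independently of whether $Y$ lies in $P_0$ or $P_1$, and these recombine into $al_2(X^0,Y)+\theta(X^0)(a)Y$.

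The substantive part is establishing (a)--(d) of Definition \ref{defi:2Lie}; this is the categorified analogue of the classical fact that the commutator of a pre-Lie algebra is a Lie bracket, and is governed entirely by the $\K$-multilinear identities (a)--(f) of Definition \ref{defi:2pre-Lie}. For (a) I would use identities (a), (b), (c) of Definition \ref{defi:2pre-Lie} termwise. For (b) the Jacobiator $l_2(x,l_2(y,z))+l_2(z,l_2(x,y))+l_2(y,l_2(z,x))$ expands by bilinearity into associators of $m_2$, and the left-symmetry built into $(e_1)$ makes the symmetric associator terms cancel in the cyclic sum, leaving exactly the cyclic symmetrization of $m_1m_3$, which is $l_1l_3$. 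Conditions (c) and (d) follow the same pattern from $(e_2)$--$(e_3)$ and $(f)$ respectively, after replacing each $l_i$ by its expression in the $m_j$ and grouping terms according to their cyclic/skew-symmetry type.

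The main obstacle is the sign- and permutation-bookkeeping in (d), the Jacobiator identity, where the twelve terms of $(f)$ must be antisymmetrized over four $P_0$-arguments and matched against the antisymmetrizations of $l_2(\cdot,l_3(\cdots))$ and $l_3(l_2(\cdots),\cdots)$; this is lengthy but mechanical. Since the entire $\K$-algebraic computation is independent of the $A$-action, it coincides with the commutator construction for pre-Lie $\K$-$2$-algebras, so I would either carry it out directly via the grouping described above or invoke the corresponding result of \cite{Sheng19}, and then combine it with the Rinehart checks to conclude that $\huaP^c=(P_0,P_1,l_1,l_2,l_3,\theta)$ is a Lie-Rinehart $2$-algebra.
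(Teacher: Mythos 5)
Your proposal is correct and follows essentially the same route as the paper: the paper likewise treats the underlying Lie $\K$-$2$-algebra axioms as the standard (straightforward but tedious) commutator computation for pre-Lie $2$-algebras, and then verifies the Leibniz condition (i) explicitly by expanding $l_2(X^0,aY)$ via conditions (i)--(ii) of Definition \ref{defi:PLR 2-A}, noting the remaining anchor conditions are immediate. Your write-up simply spells out more of the details (including the option of citing \cite{Sheng19} for the $\K$-algebraic part) than the paper's terse "the rest is obvious."
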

\begin{proof}
   It is straightforward to check that $(P_0,P_1,l_1=m_1,l_2,l_3)$ is a Lie $\K$-$2$-algebra. For $X^0,Y^0\in P_0$, $Y^1\in P_1$ and $a\in A$, we have
\begin{eqnarray*}
  l_2(X^0,aY^0)&=&m_2(X^0,aY^0)-m_2(aY^0,X^0)=am_2(X^0,Y^0)+\theta(X^0)(a)Y^0-am_2(Y^0,X^0)\\
  &=&a l_2(X^0,Y^0) +\theta(X^0)(a)Y^0
\end{eqnarray*}
and
\begin{eqnarray*}
  l_2(X^0,aY^1)&=&m_2(X^0,aY^1)-m_2(aY^1,X^0)=am_2(X^0,Y^1)+\theta(X^0)(a)Y-am_2(Y^1,X^0)\\
  &=&a l_2(X^0,Y^1) +\theta(X^0)(a)Y^1,
\end{eqnarray*}
which implies (i) in the definition of Lie-Rinehart $2$-algebra. The rest is obvious.
 \end{proof}

Let $\huaP=(P_0,P_1,m_1,m_2,m_3,\theta)$ be a skeletal pre-Lie-Rinehart $2$-algebra. Condition $\rm(e_1)$ in Definition \ref{defi:2pre-Lie} and (i) and (iii) in Definition \ref{defi:PLR 2-A} imply that $(P_0,m_2,\theta)$ is a pre-Lie-Rinehart algebra. Conditions $\rm(e_2)$ and $\rm(e_3)$ in Definition \ref{defi:2pre-Lie} and (i) and (ii) in Definition \ref{defi:PLR 2-A} imply that $\rho$ and $\mu$ given by
\begin{eqnarray*}
  \rho(X^0)(Y^1)=m_2(X^0,Y^1),\quad \mu(X^0)(Y^1)=m_2(Y^1,X^0),\quad\forall~X^0\in P_0, Y^1\in P_1,
\end{eqnarray*}
give a representation of pre-Lie-Rinehart algebra  $(P_0,m_2,\theta)$ on $P_1$. Furthermore, Condition $\rm(f)$ in Definition \ref{defi:2pre-Lie} means that $m_3$ is a $3$-cocycle on $P_0$ with values in $P_1$. Summarize the discussion above, we have
\begin{pro}
  There is a one-to-one corresponding between skeletal pre-Lie-Rinehart $2$-algebras and triples $((P_0,m_2,\theta),(P_1;\rho,\mu),m_3)$, where $(P_0,m_2,\theta)$ is a pre-Lie-Rinehart algebra, $(P_1;\rho,\mu)$ is a representation of $(P_0,m_2,\theta)$ and $m_3$ is a $3$-cocycle on $P_0$ with values in $P_1$.
\end{pro}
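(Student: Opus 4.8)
The plan is to establish the bijection by two mutually inverse constructions, reading off the three pieces of the triple from a skeletal pre-Lie-Rinehart $2$-algebra and, conversely, assembling such a $2$-algebra from a triple; the whole argument is a careful translation of the axioms of Definitions \ref{defi:2pre-Lie} and \ref{defi:PLR 2-A} under the simplifying hypothesis $m_1=0$. For the forward map I would start from a skeletal $\huaP=(P_0,P_1,m_1=0,m_2,m_3,\theta)$ and take the pre-Lie operation on $P_0$ to be $m_2|_{P_0\otimes P_0}$, retaining the anchor $\theta$. With $m_1=0$, condition $\rm(e_1)$ of Definition \ref{defi:2pre-Lie} forces the associator of $m_2$ to be symmetric in its first two arguments, i.e. left-symmetry on $P_0$, while (i) and (iii) of Definition \ref{defi:PLR 2-A} give the two Leibniz-type identities and the $A$-linearity in the first slot, and (iv) gives the anchor compatibility; hence $(P_0,m_2,\theta)$ is a pre-Lie-Rinehart algebra. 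Setting $\rho(X^0)(Y^1)=m_2(X^0,Y^1)$ and $\mu(X^0)(Y^1)=m_2(Y^1,X^0)$, condition $\rm(e_2)$ with $m_1=0$ reduces to $[\rho(x),\rho(y)]^c=\rho(m_2(x,y)-m_2(y,x))$, making $\rho$ a representation of the sub-adjacent Lie-Rinehart algebra, while $\rm(e_3)$ reduces to exactly the compatibility \eqref{representation condition 2} between $\rho$ and $\mu$; so $(P_1;\rho,\mu)$ is a representation. Finally $\rm(f)$ becomes $\delta m_3=0$ for the coboundary $\delta$ of Section \ref{sec:Cohomology and extensions}, so that $m_3\in\Hom_A(\wedge^2 P_0\otimes P_0,P_1)$ is a $3$-cocycle.

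For the converse map I would begin with a triple $((P_0,m_2,\theta),(P_1;\rho,\mu),m_3)$, put $\huaP=P_0\oplus P_1$ with its given $A$-module structure, set $m_1=0$, keep $m_2$ on $P_0\otimes P_0$, and define the mixed-degree parts by $m_2(X^0,Y^1)=\rho(X^0)(Y^1)$ and $m_2(Y^1,X^0)=\mu(X^0)(Y^1)$, with $m_3$ the trilinear term and $\theta$ the anchor. Since $m_1=0$, conditions $\rm(a)$, $\rm(b)$, $\rm(c)$ hold trivially; $\rm(e_1)$ is the pre-Lie identity for $m_2$ on $P_0$; $\rm(e_2)$ and $\rm(e_3)$ are the two representation identities for $(\rho,\mu)$; and $\rm(f)$ is the cocycle condition $\delta m_3=0$. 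The Rinehart axioms (i)--(v) of Definition \ref{defi:PLR 2-A} then follow from the pre-Lie-Rinehart axioms of $(P_0,m_2,\theta)$, the $A$-linearity of $\rho$, $\mu$ and $m_3$, and $m_1=0$. The two constructions are visibly inverse, since neither alters $P_0$, $P_1$, $m_3$ or $\theta$, and the dictionary between $m_2$ on mixed degrees and the pair $(\rho,\mu)$ is a tautology.

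The main obstacle is the bookkeeping in matching condition $\rm(f)$ of Definition \ref{defi:2pre-Lie} with the vanishing of the coboundary. Concretely, one must expand $\delta m_3(w,x,y,z)$ using the explicit formula for $\delta$ with $n=3$, substitute $\rho(X^0)(Y^1)=m_2(X^0,Y^1)$, $\mu(X^0)(Y^1)=m_2(Y^1,X^0)$ and the sub-adjacent bracket $m_2(X^0,Y^0)-m_2(Y^0,X^0)$, and verify term by term that the result coincides with the twelve-term expression in $\rm(f)$. I expect this to be the only genuinely computational step; the remaining verifications are either trivial once $m_1=0$ or direct translations of the $A$-linearity and anchor conditions, and so are routine.
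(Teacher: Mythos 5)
Your proposal is correct and follows essentially the same route as the paper: the paper likewise reads off the pre-Lie-Rinehart structure on $P_0$ from condition $\rm(e_1)$ together with the Rinehart axioms, obtains the representation $(\rho,\mu)$ from $\rm(e_2)$ and $\rm(e_3)$ specialized at $m_1=0$, and identifies condition $\rm(f)$ with the $3$-cocycle condition for $m_3$, with the converse being the same dictionary run backwards. The only difference is one of presentation: the paper leaves the verification of $\rm(f)$ versus $\delta m_3=0$ and the inverse construction implicit, whereas you spell them out.
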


\begin{thm}\label{one to one correspondence}
  There is a one-to-one correspondence between strict (pre-)Lie-Rinehart $2$-algebras and crossed modules for (pre-)Lie-Rinehart algebras.
\end{thm}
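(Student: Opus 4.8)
The plan is to set up the correspondence explicitly in the pre-Lie case and then observe that the Lie case is strictly parallel. Starting from a strict pre-Lie-Rinehart $2$-algebra $(P_0,P_1,m_1,m_2,\theta)$, so $m_3=0$, I would put $E=P_0$, $\huaE=P_1$, $\partial=m_1$, and define
\[
\rho(X)(u)=m_2(X,u),\quad \mu(X)(u)=m_2(u,X),\quad u\cdot_{\huaE}v=m_2(m_1(u),v),
\]
for $X\in E$ and $u,v\in\huaE$. Conversely, from a crossed module $(E,\huaE,\partial,(\rho,\mu))$ I would put $P_0=E$, $P_1=\huaE$, $m_1=\partial$, $m_3=0$, and recover $m_2$ on the three admissible pieces by $m_2(X,Y)=X\cdot_E Y$, $m_2(X,u)=\rho(X)u$ and $m_2(u,X)=\mu(X)u$. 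The two assignments are visibly inverse to one another, so the content of the theorem is entirely the verification that each construction lands in the correct category.

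For the forward direction I would match the crossed-module axioms of Definition \ref{defi:cmodPLRA} against the $2$-algebra conditions of Definitions \ref{defi:2pre-Lie} and \ref{defi:PLR 2-A} one at a time. That $E$ is a pre-Lie-Rinehart algebra comes from $\rm(e_1)$ together with (i), (iv) of Definition \ref{defi:PLR 2-A}. Conditions $\rm(a)$ and $\rm(b)$ yield the two identities $\partial(\rho(X)u)=X\cdot_E\partial(u)$ and $\partial(\mu(X)u)=\partial(u)\cdot_E X$ in (1); condition $\rm(c)$ asserts $m_2(m_1(u),v)=m_2(u,m_1(v))$, which is exactly what makes $u\cdot_{\huaE}v=\rho(\partial u)v=\mu(\partial v)u$ well defined and gives (2); and $\rm(v)$ is (3). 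That $(\rho,\mu)$ is a representation is the crux: rewriting $\rm(e_2)$ with $m_3=0$ gives $[\rho(X),\rho(Y)]^c=\rho([X,Y]_E)$, so $\rho$ is a representation of $E^c$ (its $A$-linearity and derivation property over $\theta$ coming from (i)), while rewriting $\rm(e_3)$ gives precisely $\rho(X)\mu(Y)-\mu(Y)\rho(X)=\mu(X\cdot_E Y)-\mu(Y)\mu(X)$, i.e.\ \eqref{representation condition 2}; the $A$-linearity of $\mu$ is (ii).

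The one step that is not pure translation is checking that $(\huaE,\cdot_{\huaE})$ is a pre-Lie algebra and that $\partial$ is a pre-Lie homomorphism. Here I would use $\rm(a)$ to compute $m_1(m_2(m_1(u),v))=m_2(m_1(u),m_1(v))$, which immediately gives $\partial(u\cdot_{\huaE}v)=\partial(u)\cdot_E\partial(v)$ and $(u\cdot_{\huaE}v)\cdot_{\huaE}w=m_2(m_2(m_1(u),m_1(v)),w)$. The associator of $\cdot_{\huaE}$ then equals $m_2(m_1(u),m_2(m_1(v),w))-m_2(m_2(m_1(u),m_1(v)),w)$, and substituting $x=m_1(u)$, $y=m_1(v)$, $m=w$ into $\rm(e_2)$ shows it is symmetric in $u,v$. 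This is the only place a genuine (if short) computation is needed, and I expect it to be the main technical point rather than a true obstacle, since everything else is dictionary-matching.

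For the reverse direction the same dictionary is read backwards: $\rm(e_1)$ is the pre-Lie identity on $E$, $\rm(e_2)$ and $\rm(e_3)$ are the two defining properties of the representation $(\rho,\mu)$, $\rm(a)$–$\rm(c)$ are conditions (1)–(2), $\rm(v)$ is (3), $\rm(f)$ holds trivially since $m_3=0$, and (i)–(iv) of Definition \ref{defi:PLR 2-A} encode the Leibniz and anchor conditions of $E$ together with the $A$-linearity of $\rho,\mu,\partial$. Finally, the Lie case is obtained verbatim after replacing $m_2$ by the skew-symmetric $l_2$, the product $\cdot_E$ by $[-,-]_E$, the pair $(\rho,\mu)$ by the single $\rho$, and conditions $\rm(e_1)$–$\rm(f)$ of Definition \ref{defi:2pre-Lie} by $\rm(b)$–$\rm(d)$ of Definition \ref{defi:2Lie}; alternatively one may deduce it from the pre-Lie case by passing to the sub-adjacent structures as in the preceding propositions.
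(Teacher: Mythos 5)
Your proposal is correct and follows essentially the same route as the paper: the identical dictionary $\partial=m_1$, $\rho(X)u=m_2(X,u)$, $\mu(X)u=m_2(u,X)$, $u\cdot_{\huaE}v=m_2(m_1(u),v)$, with the crossed-module axioms matched one-by-one against conditions $\rm(a)$--$\rm(f)$ of Definition \ref{defi:2pre-Lie} and (i)--(v) of Definition \ref{defi:PLR 2-A}, and the Lie case handled in parallel. If anything, your verification that $(\huaE,\cdot_{\huaE})$ is pre-Lie via substituting $x=m_1(u)$, $y=m_1(v)$ into $\rm(e_2)$ is spelled out more explicitly than in the paper, which simply cites conditions $\rm(a)$ and $\rm(e_2)$ and omits the details of the converse.
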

\begin{proof}
We only prove this conclusion for strict pre-Lie-Rinehart $2$-algebras and crossed modules for pre-Lie-Rinehart algebras. The other case can be proved similarly. Let $\huaP=(P_0,P_1,m_1,m_2,m_3,\theta)$ be a strict pre-Lie-Rinehart $2$-algebra. We construct a crossed module for pre-Lie-Rinehart algebras as follows. Obviously, $(P_0,\cdot_{P_0}=m_2,\theta)$ is a pre-Lie-Rinehart algebra. Define a multiplication $\cdot_{P_1}:P_1\times P_1\longrightarrow P_1$ by
\begin{equation}
  X^1\cdot_{P_1} Y^1=m_2(m_1(X^1),Y^1)=m_2(X^1,m_1(Y^1)),\quad\forall~X^1,Y^1\in P_1.
\end{equation}
Then by Condition (a) and $\rm(e_2)$ in Definition \ref{defi:2pre-Lie}, $(P_1,\cdot_{P_1})$ is a pre-Lie $\K$-algebra. By Condition (a) in Definition \ref{defi:2pre-Lie} and $m_1$ is $A$-linear, we deduce that $m_1$ is an $A$-linear homomorphism between pre-Lie algebras. Conditions $\rm(e_2)$ and $\rm(e_3)$ in Definition \ref{defi:2pre-Lie} and (i) and (ii) in Definition \ref{defi:PLR 2-A} implies that $\rho$ and $\mu$ given by
\begin{eqnarray*}
  \rho(X^0)(Y^1)=m_2(X^0,Y^1),\quad \mu(X^0)(Y^1)=m_2(Y^1,X^0),\quad\forall~X^0\in P_0, Y^1\in P_1,
\end{eqnarray*}
give a representation of pre-Lie-Rinehart algebra  $(P_0,\cdot_{P_0},\theta)$ on $P_1$. By Conditions (a) and $(b)$, we deduce that (1) in Definition \ref{defi:cmodPLRA} holds. (2) in Definition \ref{defi:cmodPLRA} follows from the definition of $\cdot_{P_1}$ directly. (3) in Definition \ref{defi:cmodPLRA} follows from (v) in Definition \ref{defi:PLR 2-A}. Thus $((P_0,\cdot_{P_0}),(P_1,\cdot_{P_1}),m_1,(\rho,\mu))$ constructed above is a crossed module for pre-Lie-Rinehart algebras.

Conversely, a crossed module for pre-Lie-Rinehart algebras $((P_0,\cdot_{P_0}),(P_1,\cdot_{P_1}),\partial,(\rho,\mu))$ gives rises to a strict pre-Lie-Rinehart $2$-algebra $(P_0,P_1,m_1=\partial,m_2,m_3=0)$, where $m_2:P_i\otimes P_j\longrightarrow P_{i+j},\quad 0\leq i+j\leq1$ is given by
$$m_2(X^0,Y^0)=X^0\cdot_{P_0}Y^0,\quad m_2(X^0,Y^1)=\rho(X^0)(Y^1)\quad m_2(Y^1,X^0)=\mu(X^0)(Y^1).$$
The crossed module conditions give various conditions for a strict pre-Lie-Rinehart $2$-algebra. We omit the details.
\end{proof}

\noindent
{\bf Acknowledgements. } This research is supported by NSFC (11901501). We give our warmest thanks to Chengming Bai and Yufeng Pei for very useful comments and discussions.

\vspace{3mm}
\noindent


\begin{thebibliography}{abc}
\bibitem{BC}
  J. C. Baez and A. S. Crans. Higher-dimensional algebra. VI. Lie 2-algebras. \emph{Theory Appl. Categ.} 12 (2004), 492-538.

\bibitem{BCEM}
A. Ben Hassine, T. Chtioui, M. Elhamdadi and S. Mabrouk. Cohomology and deformations of left-symmetric Rinehart algebras. arXiv:1312.6526.

\bibitem{BBo}
S. Benayadi and M. Boucetta. On para-K${\rm\ddot{a}}$hler Lie algebroids and generalized pseudo-Hessian structures. \emph{Math. Nachr.}  292 (2019), 1418-1443.

\bibitem{Boucetta}
M. Boucetta. Solutions of the classical Yang-Baxter equation and non-commutative deformations. \emph{Lett. Math. Phys.} 83 (2008), 69-81.

\bibitem{Pre-lie algebra in geometry}
D. Burde. Left-symmetric algebras, or left-symmetric algebroids in geometry
and physics. \emph{Cent. Eur. J. Math.} 4 (2006), 323-357.




\bibitem{CaLaPi1}
J. M. Casas, M. Ladra and T. Pirashvili. Crossed modules for Lie-Rinehart algebras. \emph{J. Algebra} 274(2004), 192-201.

\bibitem{CaLaPi2}
 J. M. Casas, M. Ladra and T. Pirashvili. Triple cohomology of Lie-Rinehart algebras and the canonical class of associative algebras. \emph{J. Algebra} 291 (2005), no. 1, 144-163.


\bibitem{chapoton}
F. Chapoton and M. Livernet. Pre-Lie algebras and the rooted trees operad. \emph{Int. Math. Res. Not.} 8 (2001), 395-408.


\bibitem{Dokas}
I. Dokas. Cohomology of restricted Lie-Rinehart algebras and the Brauer group. \emph{Adv. Math.} 231 (2012), 2573-2592.

\bibitem{FMM}
G. Fl\o ystad, D. Manchon and H. Z. Munthe-Kaas. The universal pre-Lie-Rinehart algebras of aromatic trees. arXiv:2002.05718.

\bibitem{G}
 M. Gerstenhaber. The cohomology structure of an associative ring. \emph{Ann. Math.} 78 (1963), 267-288.

\bibitem{G2}
M. Gerstenhaber. A uniform cohomology theory for algebras. \emph{Pros. Nat. Acad. Sci. U.S.A.} 51(1964), 626-629.



\bibitem{Hueb1}
J. Huebschmann. Poisson cohomology and quantization. \emph{J. Reine Angew. Math.} 408 (1990), 57-113.

\bibitem{Hueb2}
J. Huebschmann. Lie-Rinehart algebras, Gerstenhaber algebras and Batalin-Vilkovisky algebras. \emph{Ann. Inst. Fourier (Grenoble)} 48 (1998), 425-440.


\bibitem{Huebs3}
J. Huebschmann. Duality for Lie-Rinehart algebras and the modular class. \emph{J. Reine Angew. Math.} 510 (1999), 103-159.

\bibitem{Huebs4}
J. Huebschmann. Lie-Rinehart Algebras, Descent, and Quantization. \emph{Amer. Math. Soc.}  43 (2004), 295-316.

\bibitem{Kassel}
C. Kassel and J.-L. Loday. Extensions centrales d'alg\`ebres de Lie. \emph{Ann. Inst. Fourier (Grenoble)} 32 (1982), 119-142.

\bibitem{Koszul}
J. L. Koszul. Domaines born\'es homogenes et orbites de groupes de transformations affines. \emph{ Bull. Soc. Math.
France} 89 (1961), 515-533.

\bibitem{LS} J. Liu and Y. Sheng. QP-structures of degree $3$ and ${\rm LWX}$ $2$-algebroids.  \emph{J. Symplectic Geom.} 17 (2019), 1853-1891.

\bibitem{LiuShengBaiChen}
J. Liu, Y. Sheng, C. Bai and Z. Chen. Left-symmetric algebroids. \emph{Math. Nachr.} 289 (2016), no. 14-15, 1893-1908.

\bibitem{lsb}
J. Liu, Y. Sheng and C. Bai. Pre-symplectic algebroids and their applications. \emph{Lett. Math. Phys.} 108 (2018), 779-804.

\bibitem{lsb2}
J. Liu, Y. Sheng and C. Bai. Left-symmetric bialgebroids and their corresponding Manin triples. \emph{Differential Geom. Appl.} 59 (2018), 91-111.




\bibitem{Rine}
G. Rinehart. Differential forms on general commutative algebras. \emph{Trans. Amer. Math. Soc.} 108 (1963), 195-222.

\bibitem{Sheng19}
Y. Sheng. Categorification of pre-Lie algebras and solutions of $2$-graded classical Yang-Baxter equations. \emph{Theory Appl. Categ.} 34 (2019), 269-294.

\bibitem{SZ17} Y. Sheng and C. Zhu. Higher extensions of Lie algebroids. \emph{Comm. Contemp. Math.}  19 (2017), 1650034, 41 pages.


\bibitem{Vinberg}
E. B. Vinberg. Convex homogeneous cones. \emph{ Transl. Moscou Math. Soc.} 12 (1963), 340-403.


\bibitem{WLS}
Q. Wang, J. Liu and Y. Sheng. Koszul-Vinberg structures and compatible structures on left-symmetric algebroids. \emph{Int. J. Geom. Methods Mod. Phys.} 17 (2020), 2050199.

\bibitem{White}
J. H. C. Whitehead. Combinatorial homotopy. II. \emph{ Bull. Amer. Math. Soc.} 55 (1949), 453-496.


\end{thebibliography}
 \end{document}